\documentclass{amsart}

\usepackage{mdframed} 
\usepackage[margin=1.5in]{geometry}
\usepackage{latexsym,amsxtra,amscd,ifthen,amsmath,color, amsthm}
\usepackage{amsfonts}
\usepackage{soul}

\makeatletter
\@namedef{subjclassname@2020}{%
  \textup{2020} Mathematics Subject Classification}
\makeatother

\usepackage{verbatim}
\usepackage{hyperref}
\usepackage{cancel}
\usepackage{amsthm, tikz, tikz-cd, bbm} 
\usetikzlibrary{arrows}
\usepackage[capitalise]{cleveref}
\usepackage{amssymb}
\usepackage[notcite,notref,final]{showkeys}
\usepackage[all,cmtip]{xy}
\usepackage[normalem]{ulem} 
\usepackage[shortlabels]{enumitem}
\usepackage{xcolor}
\definecolor{darkgreen}{RGB}{55,138,0}
\definecolor{burntorange}{RGB}{180,85,0}
\definecolor{navyblue}{RGB}{18,40,180}
\definecolor{cyan(process)}{rgb}{0.0, 0.6, 1.0}

\hypersetup{
 colorlinks  = true,  
 urlcolor   = blue,  
 linkcolor  = navyblue,  
 citecolor  = black   
}
\allowdisplaybreaks

\numberwithin{equation}{section}

\theoremstyle{plain}
\newtheorem{theorem}{Theorem}[section]
\newtheorem*{theorem*}{Theorem}
\newtheorem*{definition*}{Definition}
\newtheorem{lemma}[theorem]{Lemma}

\newtheorem{proposition}[theorem]{Proposition}
\newtheorem{corollary}[theorem]{Corollary}

\newtheorem{definitionproposition}[theorem]{Definition-Proposition}

\theoremstyle{definition}
\newtheorem{definition}[theorem]{Definition}
\newtheorem{notation}[theorem]{Notation}

\newtheorem{example}[theorem]{Example}
\newtheorem{hypothesis}[theorem]{Hypothesis}

\newtheorem{remark}[theorem]{Remark}
\newtheorem{question}[theorem]{Question}

\newcommand{\stkout}[1]{\ifmmode\text{\sout{\ensuremath{#1}}}\else\sout{#1}\fi}

\newcommand{\kk}{\Bbbk}
\renewcommand{\Vec}{{\sf Vec}_{\kk}}
\newcommand{\C}{\mathcal{C}}
\newcommand{\D}{\mathcal{D}}
\newcommand{\M}{\mathcal{M}}
\newcommand{\unit}{\mathbbm{1}}
\newcommand{\End}{\textnormal{End}}
\newcommand{\Rep}{{\sf Rep}}
\newcommand{\ncat}{\underrightarrow{\mathbb{N}_0}}
\newcommand{\Ncat}{\underline{\mathbb{N}_0}}

\newcommand{\ess}{\mathcal{S}}
\newcommand{\ep}{\varepsilon}
\newcommand{\ev}{\textnormal{ev}}
\newcommand{\coev}{\textnormal{coev}}
\newcommand{\coker}{\textnormal{coker}}
\newcommand{\leftdual}{{}^* \hspace{-.08cm}}
\newcommand{\one}{\mathbbm{1}}
\newcommand{\gr}{\textnormal{gr}}
\newcommand{\id}{\textnormal{id}}
\newcommand{\RN}[1]{%
  \textup{\uppercase\expandafter{\romannumeral#1}}%
}

\newcommand{\pullbackcorner}[1][dr]{\save*!/#1-1.5pc/#1:(-1,1)@^{|-}\restore}

\newcommand{\pushoutcorner}[1][ul]{\save*!/#1-1.5pc/#1:(-1,1)@^{|-}\restore}

\makeatletter              % This sequence of commands will
\let\c@equation\c@theorem  % incorporate equation numbering
                            % into theorem numbering scheme
\makeatother

%%%%%%%%%%%%%%%%%%%%%%%%%%%%%%
%%%%%%%%%%%%%%%%%%%%%%%%%%%%%%
%%%%%%%%%%%%%%%%%%%%%%%%%%%%%%

\begin{document}

\title[Filtered Frobenius algebras in monoidal categories]
{Filtered Frobenius algebras in monoidal categories}

\author{Chelsea Walton and Harshit Yadav}

\address{Walton: Department of Mathematics, Rice University, Houston, TX 77005, USA}
\email{notlaw@rice.edu}

\address{Yadav: Department of Mathematics, Rice University, Houston, TX 77005, USA}
\email{hy39@rice.edu}

%%%%%%%%%%%%%%%%%%%%%%%%%%%%%%
%%%%%%%%%%%%%%%%%%%%%%%%%%%%%%
%%%%%%%%%%%%%%%%%%%%%%%%%%%%%%

\begin{abstract} 
We develop filtered-graded techniques for algebras in monoidal categories with the main goal of establishing a categorical version of Bongale's 1967 result: A filtered deformation of a Frobenius algebra over a field is Frobenius as well. Towards the goal, we first  construct a monoidal associated graded functor, building on prior works of Ardizzoni-Menini,  of Galatius et al., and of Gwillian-Pavlov. Next, we produce equivalent conditions for an algebra in a rigid monoidal category to be Frobenius in terms of the existence of categorical Frobenius form; this builds on work of Fuchs-Stigner. These two results of independent interest are then used to achieve our goal. 
As an application of our main result, we show that any exact module category over a symmetric finite tensor category $\mathcal{C}$ is represented by a Frobenius algebra in $\mathcal{C}$. Several directions for further investigation are also proposed.
\end{abstract}

\subjclass[2020]{18M05, 16W70, 18M15, 15A66}
%Found here:  https://zbmath.org/static/msc2020.pdf
\keywords{associated graded algebra, filtered algebra, Frobenius algebra, rigid monoidal category, symmetric finite tensor category}

\maketitle

\setcounter{tocdepth}{2}
\tableofcontents
%\setcounter{section}{-1}

%%%%%%%%%%%%%%%%%%%%%%%%%%%%%%%%%%%%%%%
%%%%%%%%%%%%%%%%%%%%%%%%%%%%%%%%%%%%%%%
%%%%%%%%%%%%%%%%%%%%%%%%%%%%%%%%%%%%%%%

\section{Introduction} \label{sec:intro}

 This article is a study of filtered-graded techniques for  algebras in monoidal categories $(\C, \otimes, \unit)$.  Unless stated otherwise, we assume that all algebras $A$ are $\mathbb{N}_0$-filtered, where $\mathbb{N}_0$ is the monoid of natural numbers including 0, with filtration $F_A$ so that $A = \bigcup_{i \in \mathbb{N}_0} F_A(i)$.  We say that a filtered algebra $A$ is {\it connected} if $F_A(0) = \unit$, and we refer to  $A$ as a {\it filtered deformation} of its associated graded algebra gr$(A)$. This terminology is standard for the monoidal category, $\Vec$, of finite-dimensional  vector spaces over a field $\kk$, and the framework for filtered and graded algebras in general monoidal categories is developed further in this work.
To motivate our main result,  recall that in $\Vec$ many algebraic properties of (graded) algebras lift to filtered deformations, including being an integral domain, prime,   Noetherian \cite[Section~1.6]{MR}, and in some cases, being Calabi-Yau \cite{BT,WZ}. Here, we investigate when the Frobenius condition for graded algebras in certain monoidal categories lifts  to  filtered deformations. Namely, our goal  is to generalize the following result of P. R. Bongale for algebras in $\Vec$.

\begin{theorem} \cite[Theorem~2]{Bongale1}
Let $A$ be a finite-dimensional, connected, filtered $\kk$-algebra. If the associated graded algebra $\gr(A)$ is a Frobenius $\kk$-algebra, then so is~$A$.
\end{theorem}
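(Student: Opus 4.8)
The plan is to transport a Frobenius form from $\gr(A)$ to $A$ and check that nondegeneracy survives, the filtration making the verification transparent. Recall that a finite-dimensional $\kk$-algebra is Frobenius exactly when it carries a \emph{Frobenius form}, i.e.\ a functional $\lambda$ whose bilinear form $\beta(a,b)=\lambda(ab)$ is nondegenerate; equivalently, $\ker\lambda$ contains no nonzero one-sided ideal. Such a form is automatically associative, since $\beta(ab,c)=\lambda(abc)=\beta(a,bc)$, so the only thing to produce on $A$ is a functional whose kernel swallows no ideal.

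First I would reduce the graded datum to a \emph{homogeneous} Frobenius form concentrated in top degree. Since $A$ is connected, $\gr(A)$ is a connected graded algebra, hence local with maximal ideal $\bigoplus_{i\geq 1}\gr(A)_i$; let $d$ be its top degree. As $\gr(A)$ is local and Frobenius, its socle is one-dimensional, and because $\gr(A)_d$ is always annihilated by the radical we get $\gr(A)_d\cong\kk$ and $\mathrm{soc}(\gr(A))=\gr(A)_d$. Projecting onto this line gives a functional $\bar\lambda\colon\gr(A)\to\kk$ with $\ker\bar\lambda=\bigoplus_{i<d}\gr(A)_i$. This $\bar\lambda$ is a Frobenius form: a nonzero one-sided ideal $I$ has nonzero socle, and $\mathrm{soc}(I)\subseteq\mathrm{soc}(\gr(A))=\gr(A)_d$, so $I\not\subseteq\ker\bar\lambda$. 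By homogeneity the associated pairing then restricts to a perfect pairing $\gr(A)_i\times\gr(A)_{d-i}\to\kk$ for every $i$.

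Next I would lift $\bar\lambda$ along the filtration. Since $A$ is finite-dimensional the filtration stabilizes, so $F_A(d)=A$ and there is a projection $\pi\colon A=F_A(d)\twoheadrightarrow F_A(d)/F_A(d-1)=\gr(A)_d$; set $\lambda:=\bar\lambda\circ\pi$. To see $\beta(a,b)=\lambda(ab)$ is nondegenerate, fix $a\neq 0$ and let $i$ be minimal with $a\in F_A(i)$, so the symbol $\bar a\in\gr(A)_i$ is nonzero. Perfectness of the degree-$d$ pairing yields $\bar b\in\gr(A)_{d-i}$ with $\bar\lambda(\bar a\,\bar b)\neq 0$; choosing a lift $b\in F_A(d-i)$ and using that the symbol map is multiplicative, one has $\pi(ab)=\bar a\,\bar b$, whence $\lambda(ab)=\bar\lambda(\bar a\,\bar b)\neq 0$. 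Thus $a\mapsto\beta(a,-)$ is injective, hence bijective by equality of dimensions, so $\beta$ is nondegenerate and $A$ is Frobenius.

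I expect the genuine obstacle to be the first step, namely the claim that the Frobenius form on $\gr(A)$ may be taken homogeneous and supported in top degree. Classically this rests on the socle computation for local graded algebras used above, but the categorical generalization toward which the paper is building cannot appeal to such element-wise socle arguments. There the analogous input should instead be furnished by the characterization of Frobenius algebras in terms of categorical Frobenius forms (building on Fuchs--Stigner) together with the monoidal associated graded functor, which jointly stand in for the homogeneity reduction exploited here; the remaining lifting-and-nondegeneracy step should then carry over essentially verbatim once multiplicativity of the symbol map is phrased functorially.
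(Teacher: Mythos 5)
Your proof is correct, and for the classical statement as cited the paper offers no argument of its own (it defers to Bongale and instead proves the categorical generalization, Theorem~\ref{thm:main}), so the right comparison is with that proof specialized to $\Vec$. The first half of your argument --- that a connected graded Frobenius algebra has one-dimensional top degree, that projection onto it is a Frobenius form, and that the pairing restricts to perfect pairings $\gr(A)_i\times\gr(A)_{d-i}\to\kk$ --- is exactly the content of the paper's Lemma~\ref{lem:Frobgr}, though you reach it via locality and socles while the paper manipulates the pairing and copairing directly (precisely because socle arguments do not categorify). The second half genuinely diverges. You verify nondegeneracy of $\beta(a,b)=\lambda(ab)$ elementwise: pick the minimal filtration degree of $a$, use the complementary-degree perfect pairing on $\gr(A)$ to find a partner for its symbol, and lift. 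The paper instead uses the ideal-theoretic characterization (Theorem~\ref{thm:defFrob}(f)): given a left ideal $I$ contained in $\ker\eta$, it equips $I$ with an induced filtration, shows $\gr(I)$ lands in the kernel of the graded Frobenius form so that $\gr(\phi)=0$, and then climbs the filtration by induction to conclude $I=0$. Your route is shorter and more transparent over a field, and your use of "multiplicativity of the symbol map" is harmless here since you only need $\pi(ab)=\bar a\,\bar b$ in the top degree, which is the definition of the graded product; but it trades on choosing elements, leading terms, and splittings of the filtration, which is exactly what the paper's ideal-based argument is designed to avoid. You correctly identify this as the obstruction to categorifying your own approach.
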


We achieve a generalization of her result for algebras in abelian, rigid  monoidal categories, which includes algebras in $\Vec$ (i.e., $\kk$-algebras), and algebras in categories of finite-dimensional representations of finite-dimensional (weak, quasi-)Hopf algebras $H$ over $\kk$ (i.e., $H$-module algebras over $\kk$). 

\begin{theorem}[Theorem~\ref{thm:main}] \label{thm:main-intro}
Let $\C$ be an abelian, rigid monoidal category, and let $A$ be a connected filtered algebra in $\C$ with finite monic filtration.  If the associated graded algebra $\gr(A)$ is a Frobenius algebra in $\C$, then so is $A$.
\end{theorem}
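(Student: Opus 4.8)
The plan is to reduce the statement to producing a single Frobenius form on $A$ and then to verify its non-degeneracy by passing to the associated graded. By the Fuchs--Stigner-type characterization established earlier, it suffices to exhibit a morphism $\lambda \colon A \to \unit$ in $\C$ whose associated pairing $\beta := \lambda \circ m \colon A \otimes A \to \unit$ is non-degenerate, i.e.\ the induced morphism $\Phi \colon A \to A^*$ is an isomorphism. Let $n$ be the top of the finite filtration, so that $F_A(n) = A$ and $\gr(A) = \bigoplus_{i=0}^{n} \gr_i(A)$ with $\gr_0(A) = \unit$ by connectedness.

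First I would pin down the candidate form. Since $\gr(A)$ is Frobenius and connected, I claim any Frobenius pairing of $\gr(A)$ restricts to a perfect pairing of $\gr_i(A)$ with $\gr_{n-i}(A)$ for each $i$; in particular the $\gr_0$-with-$\gr_n$ pairing gives a canonical isomorphism $\gr_n(A) \xrightarrow{\sim} \unit^* \cong \unit$. Granting this, I define $\lambda$ as the composite $A = F_A(n) \twoheadrightarrow F_A(n)/F_A(n-1) = \gr_n(A) \xrightarrow{\sim} \unit$, the categorical analogue of ``take the top-degree coefficient.'' The content of the claim is a block-(anti)triangular argument: expanding a Frobenius form of $\gr(A)$ into its graded components, non-degeneracy of the total pairing forces invertibility of each anti-diagonal block $\gr_i(A) \otimes \gr_{n-i}(A) \to \unit$, which is exactly the pairing that $\lambda$ induces in the top degree. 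I expect this to require the abelian, rigid structure to run categorically, in order to split off the strictly-lower-degree contributions.

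Next I would set up the comparison with the associated graded. The map $\lambda$ is filtered of degree $-n$, so $\beta$ and $\Phi$ are as well; equipping $A^*$ with its natural (reversed) filtration and using that $\gr$ is monoidal and hence compatible with duals, I would identify $\gr(A^*) \cong \gr(A)^*$ with the grading reversed, $\gr_j(A^*) \cong (\gr_{n-j}(A))^*$. Under these identifications the key computation is that $\gr(\Phi) \colon \gr(A) \to \gr(A)^*$ is precisely the isomorphism induced by the top-degree pairing of $\gr(A)$ from the previous paragraph. This is where the bookkeeping of the degree shift by the socle degree $n$ must be carried out carefully, and I anticipate it being the main obstacle: one must check that the reversal convention makes $\gr(\Phi)$ a genuine degree-preserving morphism and that it matches the Frobenius isomorphism of $\gr(A)$ on the nose.

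Finally I would invoke a lifting lemma: a morphism of objects carrying finite monic filtrations whose associated graded is an isomorphism is itself an isomorphism. Proved by induction on the filtration length using the snake and five lemmas in the abelian category $\C$, this yields that $\Phi$ is an isomorphism, hence $\lambda$ is a Frobenius form and $A$ is Frobenius, as desired. The hypotheses enter exactly here and above: connectedness produces $\gr_n(A) \cong \unit$ and thus the form $\lambda$, while finiteness and monic-ness of the filtration are what validate both the identification of $\gr(A^*)$ and the concluding lifting lemma.
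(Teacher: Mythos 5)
Your plan is a genuinely different route from the paper's. You use the characterization of Frobenius via an isomorphism $\Phi\colon A \to A^*$ of (right or left) $A$-modules (conditions (c)/(d) of Theorem~\ref{thm:defFrob}) and propose to lift invertibility from $\gr(\Phi)$ by a five-lemma induction along the finite monic filtration. The paper instead uses condition (f): it takes the same candidate form $\eta\colon A \twoheadrightarrow \overline{F_A(n)} \cong \unit$, and shows directly that any left (or right) ideal $(I,\phi)$ factoring through $\ker(\eta)=F_A(n-1)$ vanishes, by equipping $I$ with the pullback filtration of Proposition~\ref{prop:idealFil}, observing that $\gr(I)$ is a weak ideal of $\gr(A)$ landing in the kernel of the graded Frobenius form of Lemma~\ref{lem:Frobgr}(b), hence $\gr(\phi)=0$, and then deducing $I=0$ degree by degree. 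The identification of the candidate form and the use of Lemma~\ref{lem:Frobgr} are common to both arguments; the paper's ideal-theoretic route buys the advantage that it never has to filter, grade, or dualize $A^*$.

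That advantage points to the genuine gap in your plan. The step ``$\gr$ is monoidal and hence compatible with duals, so $\gr(A^*)\cong \gr(A)^*$ with reversed grading'' does not follow from what is available: the functor $\gr$ is constructed only as a \emph{lax} monoidal functor ($\gr_2$ is produced by a universal property of cokernels and is not shown to be invertible), and only strong monoidal functors automatically preserve duals. To run your argument you would need to (i) show that $\gr_2$ is an isomorphism for finite monic filtrations, (ii) define the dual filtration $F_{A^*}(j) = \bigl(A/F_A(n-j-1)\bigr)^*$ and check that $\Phi=$ the mate of $\lambda m$ is filtered for it, and (iii) verify that under these identifications $\gr(\Phi)$ is exactly the anti-diagonal pairing isomorphism of Lemma~\ref{lem:Frobgr}(c). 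None of this is in the paper, and (i)--(iii) together constitute a nontrivial duality theory for ${\sf Fil}(\C)$. A smaller imprecision: an arbitrary Frobenius pairing on $\gr(A)$ need not be a graded map, so it does not literally ``restrict'' to anti-diagonal blocks; what is true (and what Lemma~\ref{lem:Frobgr} proves) is that the pairing induced by the top-degree projection is perfect on each block $\gr_i(A)\otimes\gr_{n-i}(A)$, which is all you need. Your concluding lifting lemma (a filtered map with invertible associated graded is invertible, for finite monic filtrations) is correct and standard, so the plan is salvageable, but only after the duality machinery above is supplied.
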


One application of this theorem is to further the study of open-closed 2-dimensional topological quantum field theories; see \cite{Laz} and \cite[Section~2.4]{LaudaPfeiffer}. Moreover, deformations of Frobenius algebras (over a field) are used to find polynomial solutions to the Witten-Dijkgraaf-Verlinde-Verlinde equation, which in turn  describe the moduli space of topological conformal field theories \cite{Dubrovin}. 

\smallskip

To prove Theorem~\ref{thm:main-intro}, we first present a framework to study monoidal categories ${\sf Gr}(\C)$ (resp., ${\sf Fil}(\C)$) consisting of graded (resp., filtered) objects in $\C$ [Section~\ref{sec:filt-gr}], as well as algebraic structures within these categories [Section~\ref{sec:prelim}, Definition~\ref{def:fil-gr-C}]. Previous works that prompted this framework include \cite{Sch1996}, \cite{BD} \cite{ArMe}, \cite[Section~3.3]{HaMi}, \cite{GwPa}, and \cite[Section~5]{GKR}. Then, the associated graded construction is established in Section~\ref{sec:assgr}, which includes the definition of an associated graded functor and the result below.

\begin{theorem}[Theorem~\ref{thm:gr}, Proposition~\ref{prop:gr-leftadj}] \label{thm:gr-intro} If $\C$ is an abelian, monoidal category with $\otimes$ biexact, then the associated graded functor $\gr: {\sf Fil}(\C) \to {\sf Gr}(\C)$ given in Definition~\ref{def:gr-functor} is  monoidal and is right exact.
\end{theorem}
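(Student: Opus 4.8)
The plan is to establish the two assertions---that $\gr$ is monoidal and that it is right exact---by largely separate arguments, since they draw on different parts of the hypotheses. Throughout I assume the monoidal structures on ${\sf Fil}(\C)$ and ${\sf Gr}(\C)$ have been fixed in Section~\ref{sec:filt-gr} as Day-convolution products over the monoid $\mathbb{N}_0$, so that a tensor product of filtered objects $A, B$ carries the filtration $F_{A\otimes B}(n) = \sum_{i+j=n}\mathrm{im}(F_A(i)\otimes F_B(j)\to A\otimes B)$, with the analogous grading on ${\sf Gr}(\C)$.

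To prove monoidality, I would first build the comparison morphism $\mu_{A,B}\colon \gr(A)\otimes\gr(B)\to\gr(A\otimes B)$ degreewise. In degree $n$ the source is $\bigoplus_{i+j=n}\gr_i(A)\otimes\gr_j(B)$, and each summand $(F_A(i)/F_A(i-1))\otimes(F_B(j)/F_B(j-1))$ maps naturally to $F_{A\otimes B}(n)/F_{A\otimes B}(n-1)=\gr_n(A\otimes B)$; the unit constraint comes from $\gr(\unit)\cong\unit$, which holds by construction of the monoidal units. The crux is to show each $\mu_{A,B}$ is an isomorphism, and this is precisely where biexactness of $\otimes$ enters: tensoring the short exact sequences $0\to F_A(i-1)\to F_A(i)\to\gr_i(A)\to 0$ against $F_B(j)$ (and symmetrically in $B$) preserves exactness, which lets me identify $\gr_i(A)\otimes\gr_j(B)$ with the appropriate subquotient of $A\otimes B$ and assemble the sum over $i+j=n$ into $\gr_n(A\otimes B)$. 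Once the $\mu_{A,B}$ are isomorphisms, the associativity and unitality coherence diagrams for $\gr$ reduce by naturality to those already valid for $\otimes$ in $\C$, so they commute.

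For right exactness I would invoke Proposition~\ref{prop:gr-leftadj}: exhibiting $\gr$ as a left adjoint (to the functor that regards a graded object as a filtered one) makes it preserve all colimits, and in particular cokernels, so it is right exact with no further work. A more hands-on route is also available from the construction itself, since in each degree $\gr_i=\coker(F_A(i-1)\to F_A(i))$ is built from the (levelwise) filtration functors and the cokernel functor, and cokernels are right exact while coproducts are exact in an abelian category; but the adjunction argument is cleaner and sidesteps checking how cokernels in ${\sf Fil}(\C)$ compare with levelwise ones.

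I expect the main obstacle to be verifying that the comparison maps $\mu_{A,B}$ are isomorphisms, i.e.\ the categorical identity $\gr(A\otimes B)\cong\gr(A)\otimes\gr(B)$. In $\Vec$ this is a basis computation, but categorically one must run the biexactness argument with care, tracking the sum-of-subobjects defining $F_{A\otimes B}(n)$ and confirming that the cross-terms assemble without overlap into the associated-graded pieces. Once this isomorphism is in hand, both the coherence of the monoidal structure and, via the adjunction, the right exactness are comparatively routine.
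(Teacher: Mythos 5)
Your treatment of right exactness is exactly the paper's: Proposition~\ref{prop:gr-leftadj} exhibits $\gr$ as left adjoint to the functor $\textnormal{triv}$ that regards a graded object as a filtered one with zero transition maps, and right exactness follows. The monoidality argument, however, contains a genuine gap, and it stems from aiming at the wrong target. In this paper a monoidal functor (Definition~\ref{def:monfunc}) only requires a natural transformation $F_2(X,X')\colon F(X)\otimes F(X')\to F(X\otimes X')$ satisfying associativity and unitality --- it is \emph{not} required to be an isomorphism. Accordingly, the paper's proof of Theorem~\ref{thm:gr} never shows $\gr_2$ is invertible: it constructs the components $\Theta^{A,B}_{i,j}\colon \overline{F_A(i)}\otimes\overline{F_B(j)}\to\overline{F_{A\otimes B}(i+j)}$ by two successive applications of the universal property of cokernels (first tensoring the cokernel presentation of $\overline{F_A(i)}$ with $F_B(j)$, then tensoring with $\overline{F_A(i)}$ on the left), and verifies associativity by precomposing with the epimorphism $\pi^A_i\otimes\pi^B_j\otimes\pi^C_k$ and using the colimit identity \eqref{eq:AssoCo}. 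What you call ``the crux'' --- proving $\mu_{A,B}$ is an isomorphism --- is therefore not needed, and you correctly identify it as the main obstacle but do not actually carry it out.

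More seriously, the route you propose for that step would not go through at the stated level of generality. Theorem~\ref{thm:gr} is about all of ${\sf Fil}(\C)$, where the transition maps $\iota^A_i=F_A(i\to i+1)$ are \emph{not} assumed monic (monicity is only imposed later, in Theorem~\ref{thm:main}). Consequently the sequences $F_A(i-1)\to F_A(i)\to\overline{F_A(i)}\to 0$ are merely right exact, not short exact, and the graded pieces $\overline{F_A(i)}$ are not subquotients of $A$; your plan to ``identify $\gr_i(A)\otimes\gr_j(B)$ with the appropriate subquotient of $A\otimes B$'' has no meaning in this setting. Likewise, your description of the tensor filtration as $\sum_{i+j=n}\mathrm{im}\bigl(F_A(i)\otimes F_B(j)\to A\otimes B\bigr)$ differs from the paper's Day convolution $F_{A\otimes B}(k)=\mathrm{colim}_{i+j\leq k}\,F_A(i)\otimes F_B(j)$ of \eqref{eq:tens-filt}; the two agree only for filtrations by subobjects. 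If you want a proof valid for the actual hypotheses, you should follow the paper's cokernel-chasing construction of $\Theta^{A,B}_{i,j}$, which uses only right exactness of $\otimes$ in each slot and makes no monicity assumption.
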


Thus, the associated graded functor, gr,  yields a canonical graded algebra in $\C$ from a filtered algebra in $\C$. The results in Sections~\ref{sec:monoidal} and~\ref{sec:assgr} also hold for braided monoidal categories $\C$ and (graded, filtered) commutative algebras in $\C$  [Definitions~\ref{defn:brMon}--\ref{defn:brComAlg}].

\smallskip

As an application of the filtered-graded techniques developed in Sections~\ref{sec:monoidal} and~\ref{sec:assgr}, we examine how one could study filtered deformations of graded quotient algebras in monoidal categories in Section~\ref{sec:quotient}. Consider the following result.

\begin{corollary}[Corollary~\ref{cor:quot-gr}] \label{cor:quot-gr-intro}
If $A$ is a filtered algebra in $\C$, and $I$ is a filtered weak ideal of $A$ in~$\C$ \textnormal{[Definition~\ref{def:weakideal}]}, then 
$\gr(A)/\gr(I) \; \cong \; \gr(A/I)$
as graded algebras in $\C$.
\end{corollary}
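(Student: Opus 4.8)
The plan is to deduce the isomorphism directly from the right exactness and monoidality of the associated graded functor established in Theorem~\ref{thm:gr-intro}, bypassing any degree-by-degree computation. The starting observation is that, by definition of the quotient algebra, the quotient map $\pi \colon A \to A/I$ exhibits $A/I$ as the cokernel of the weak-ideal inclusion $\iota \colon I \to A$ in ${\sf Fil}(\C)$; that is,
\begin{equation*}
I \xrightarrow{\ \iota\ } A \xrightarrow{\ \pi\ } A/I \longrightarrow 0
\end{equation*}
is right exact. Applying $\gr$, which is right exact by Theorem~\ref{thm:gr-intro}, yields the right exact sequence
\begin{equation*}
\gr(I) \xrightarrow{\ \gr(\iota)\ } \gr(A) \xrightarrow{\ \gr(\pi)\ } \gr(A/I) \longrightarrow 0,
\end{equation*}
so that $\gr(A/I) \cong \coker\bigl(\gr(\iota)\bigr)$ in ${\sf Gr}(\C)$.

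First I would check that $\gr(I)$ is a filtered weak ideal of $\gr(A)$ with inclusion $\gr(\iota)$, so that the quotient $\gr(A)/\gr(I) := \coker(\gr(\iota))$ is defined in the sense of Section~\ref{sec:quotient}. This is where monoidality enters: the weak-ideal action of $A$ on $I$ is encoded by a morphism $A \otimes I \to I$ (together with its right-handed analogue) compatible with $\iota$, and applying $\gr$ together with the strong monoidal structure isomorphism $\gr(A \otimes I) \cong \gr(A) \otimes \gr(I)$ of Theorem~\ref{thm:gr-intro} produces the corresponding action $\gr(A) \otimes \gr(I) \to \gr(I)$. A short diagram chase, using naturality of the monoidal constraints, then shows these data satisfy the weak-ideal axioms of Definition~\ref{def:weakideal} and are compatible with $\gr(\iota)$.

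Combining the two steps gives $\gr(A/I) \cong \coker(\gr(\iota)) = \gr(A)/\gr(I)$ as objects of ${\sf Gr}(\C)$. To upgrade this to an isomorphism of graded algebras, I would again invoke monoidality: since $\gr$ is monoidal it carries the algebra $A$, the morphism $\pi$, and the multiplication and unit of $A/I$ to the corresponding structures, so $\gr(\pi)$ is a morphism of graded algebras; as the universal map onto the cokernel it then identifies the quotient algebra $\gr(A)/\gr(I)$ with $\gr(A/I)$ compatibly with multiplication and unit.

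The main obstacle I anticipate is the second step --- verifying that the associated graded of a weak ideal is again a weak ideal with the transported inclusion. Unlike the vector space case, where one argues on homogeneous components, here one must track the action morphisms through the monoidal coherence isomorphisms of $\gr$ and confirm they still satisfy the (possibly one-sided or braided) absorption axioms. A secondary subtlety is that right exactness only presents $\gr(A/I)$ as a cokernel, not that $\gr(\iota)$ is monic; but since the statement concerns the quotient $\gr(A)/\gr(I)$ --- itself defined as this very cokernel --- monicity of $\gr(\iota)$ is not needed, and the identification of the two cokernels is immediate once the weak-ideal structure on $\gr(I)$ is in place.
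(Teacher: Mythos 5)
Your proposal is correct and follows essentially the same route as the paper, which proves a general statement (Proposition~\ref{prop:quot-functor}) that any right exact monoidal functor $F$ satisfies $F(A)/F(I)\cong F(A/I)$ as algebras---using Proposition~\ref{prop:mon-preserve}(c) for the weak-ideal transport and the universal property of cokernels for the algebra isomorphism---and then specializes to $F=\gr$ via Theorem~\ref{thm:gr} and Proposition~\ref{prop:gr-leftadj}. One minor caveat: the paper only establishes that $\gr$ is lax monoidal (the constraint $\gr_2\colon \gr(A)\otimes\gr(I)\to\gr(A\otimes I)$ is not shown to be invertible), but that is the only direction your argument actually uses, so nothing is lost.
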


As in the case for $\C = \Vec$, computing $\gr(I)$ can be tedious; Poincar\'{e}-Birkhoff-Witt theorems and related homological methods are used to address this problem \cite{SW}. It would be interesting to develop such techniques to study filtered deformations of graded quotient algebras in monoidal categories [Remark~\ref{rem:quot-gr}].

\smallskip
Next, in Section~\ref{sec:newDefn}, we establish equivalent conditions for an algebra in an abelian, rigid monoidal category to be Frobenius, building on  previously known equivalent conditions. This is of independent interest due the prevalence of Frobenius algebras in rigid monoidal categories in generalizations of Morita equivalence  \cite{Muger,Yam,MMPRTW}, in computer science \cite{CPV}, and  in topological quantum field theory and conformal field theory \cite{Segal,Moore, KS, Hen}. For the latter, see also \cite{Sch-ICM} for an overview of works by Fuchs-Runkel-Schweigert and others on this topic including \cite{FRS1, FRS2, FRS3, FRS4, FFRS}. In fact, our result below builds on previous work of Fuchs-Stigner \cite{FS}.

\begin{theorem}[Theorem~\ref{thm:defFrob}] \label{thm:defFrob-intro}
Take $\C$ an abelian, rigid monoidal category, and let $A$ be an algebra in~$\C$. We have that $A$ is Frobenius in the sense that it is admits a compatible coalgebra structure as in Definition~\ref{def:algC} if and only if $A$ admits a Frobenius form as in Definition~\ref{def:Frobform}.
\end{theorem}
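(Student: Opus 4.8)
The plan is to prove the two implications separately in the graphical calculus of the rigid monoidal category $\C$, with the guiding dictionary that the counit should correspond to the Frobenius form and the comultiplication should correspond to a copairing extracted from the non-degeneracy of the induced pairing. Throughout, write $\beta := \lambda \circ m \colon A \otimes A \to \unit$ for the pairing associated to a morphism $\lambda \colon A \to \unit$, and recall that $\beta$ is non-degenerate precisely when it admits a copairing $\gamma \colon \unit \to A \otimes A$ satisfying the two snake identities $(\beta \otimes \id_A)(\id_A \otimes \gamma) = \id_A$ and $(\id_A \otimes \beta)(\gamma \otimes \id_A) = \id_A$; in the rigid setting this is the statement that the canonical morphism $A \to A^*$ determined by $\beta$ is an isomorphism.

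For the forward direction, assume $(A, m, u, \Delta, \varepsilon)$ carries a compatible coalgebra structure. I would set $\lambda := \varepsilon$, so that $\beta = \varepsilon \circ m$, and propose $\gamma := \Delta \circ u$ as the copairing. Verifying the first snake identity then amounts to the computation $(\varepsilon m \otimes \id_A)(\id_A \otimes \Delta u) = (\varepsilon \otimes \id_A)\,\Delta\, m\,(\id_A \otimes u) = m\,(\id_A \otimes u) = \id_A$, where the first equality uses the Frobenius relation $(m \otimes \id_A)(\id_A \otimes \Delta) = \Delta \circ m$ and the remaining steps use the counit and unit axioms; the second snake identity is the mirror computation using the other half of the Frobenius relation. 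Hence $\lambda = \varepsilon$ is a Frobenius form.

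For the converse, start from a Frobenius form $\lambda$, let $\gamma$ be the copairing furnished by non-degeneracy of $\beta = \lambda m$, set $\varepsilon := \lambda$, and define $\Delta := (m \otimes \id_A)(\id_A \otimes \gamma)$. The first task is the key lemma that this copairing is \emph{central}, i.e. $(m \otimes \id_A)(\id_A \otimes \gamma) = (\id_A \otimes m)(\gamma \otimes \id_A)$, so that $\Delta$ also has a left-handed description; this is proved by pairing both sides against $\beta$ in the last two legs, where each side collapses to the morphism $(a,c) \mapsto ac$ by a snake identity and associativity of $m$, and then invoking non-degeneracy. With both descriptions in hand, the Frobenius relation is immediate from associativity of $m$, and the counit axioms follow from the snake identities together with the auxiliary identity $(\id_A \otimes \lambda)\gamma = u = (\lambda \otimes \id_A)\gamma$ (itself a consequence of a snake identity and unitality). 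Coassociativity of $\Delta$ is then deduced from centrality of $\gamma$ and associativity of $m$, and one finally checks that the two constructions are mutually inverse, yielding the claimed equivalence.

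I expect the main obstacle to be twofold. Because $\C$ is not assumed braided or symmetric, I must keep scrupulous track of left versus right duals and of the two a priori distinct copairings; in particular one must show that a single non-degenerate $\lambda$ produces a copairing satisfying \emph{both} snake identities, equivalently that left non-degeneracy forces right non-degeneracy. This is exactly where rigidity and the groundwork of Fuchs--Stigner \cite{FS} are essential. The genuinely delicate graphical computations are the centrality of the copairing and the ensuing coassociativity of $\Delta$, since these are where associativity of $m$ and the snake identities must be interleaved, and where a misrouted strand in the non-symmetric setting would break the argument.
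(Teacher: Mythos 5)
Your forward-and-back argument correctly establishes the equivalence between a compatible coalgebra structure on $A$ and the existence of a morphism $\nu\colon A\to\unit$ for which $\nu m$ is a nondegenerate pairing, i.e.\ admits a copairing satisfying both snake identities. That, however, is condition (b) of Theorem~\ref{thm:defFrob}, which the paper treats as known (citing Fuchs--Stigner), and it is not what the statement asks for. Definition~\ref{def:Frobform} reserves the name \emph{Frobenius form} for the morphism $\nu\colon A\to\unit$ of part (f) of that theorem: one with the property that any left or right ideal $(I,\lambda_I,\phi_I)$ of $A$ whose structure morphism $\phi_I$ factors through $\ker(\nu)$ must have $\phi_I=0$. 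The theorem to be proved is the equivalence of the Frobenius structure with this ideal-theoretic condition, and that equivalence is precisely the new content of the result; your proposal never engages with it.

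Concretely, two implications are missing. One direction (the paper's (b)$\Rightarrow$(e)) is to deduce the ideal-theoretic property from your copairing: if $\phi_I$ factors through $\ker(\nu)$, then writing $\phi_I=(\id_A\otimes p)(q\otimes\id_A)\phi_I$ and pushing $\phi_I$ past the module action shows $\phi_I$ factors through $\nu\,\phi_I=0$. The harder converse (the paper's (f)$\Rightarrow$(c)) starts from the ideal-theoretic $\nu$ alone, forms $\Phi_l:=(\id_A\otimes\nu m)(\coev'_A\otimes\id_A)\colon A\to\leftdual A$, checks it is a morphism of left $A$-modules, shows that $\ker(\Phi_l)$ is a left ideal of $A$ contained in $\ker(\nu)$ (hence zero) and that $\coker(\Phi_l)^*$ is a right ideal of $A$ contained in $\ker(\nu)$ (hence zero), so that $\Phi_l$ is an isomorphism; only then does nondegeneracy, and via your computation the coalgebra structure, follow. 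Note that this step genuinely needs the hypothesis in (f) for \emph{both} left and right ideals, a subtlety your argument never has occasion to meet. Your (a)$\Leftrightarrow$(b) computation is a correct link in the chain, but as written the proof establishes a different equivalence from the one claimed.
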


After we present some preliminary results on Frobenius graded algebras in abelian, rigid monoidal categories in Section~\ref{sec:connected}, Theorems~\ref{thm:gr-intro} and~\ref{thm:defFrob-intro} are then used to achieve  Theorem~\ref{thm:main-intro} in Section~\ref{sec:thmmain}. 
In Section~\ref{sec:questions}, we highlight several directions for further investigation on Theorem~\ref{thm:main-intro}, including connections to \cite{Bongale2, LT}, questions on additional features of the associated graded functor of Theorem~\ref{thm:gr-intro}, and connections to various 2-dimensional topological quantum field theories.

\smallskip

Finally, as an application of our main result, Theorem~\ref{thm:main-intro}, we obtain a result about representations of an important class of abelian, rigid monoidal categories: {\it symmetric finite tensor categories} [Definition~\ref{def:sym-tensor}]. Such categories are  known to be equivalent to the category of super-representations of a {\it finite supergroup} by work of Deligne \cite[Corollary~0.7]{deligne2002categories}. It is useful to study representations of such categories, and of monoidal categories in general, by way of {\it module categories} [Definition~\ref{def:mod-category}]. Key results of Ostrik and Etingof state that well-behaved module categories $\mathcal{M}$ over a large class of monoidal categories $\mathcal{C}$ are equivalent to the category of modules over an algebra $A$ in $\mathcal{C}$ as $\C$-module categories \cite[Theorem~3.1]{ostrik2003module} \cite[Theorem~3.17]{etingof2003finite}. In this case, we say that $A$ {\it represents} $\mathcal{M}$ [Definition-Proposition~\ref{defprop:repAlg}]. When $\mathcal{C}$ is a symmetric finite tensor category, Etingof-Ostrik describes a choice of algebra representatives of the modules categories over $\mathcal{C}$ in terms of {\it internal End objects} \cite[Section~4.2]{etingof2003finite}. We build on this result, and establish the following theorem.

\begin{theorem}[Theorem~\ref{thm:repByFrob}] \label{thm:repByFrob-intro} Every exact module category over a symmetric finite tensor category $\mathcal{C}$ is represented by a Frobenius algebra in $\mathcal{C}$.
\end{theorem}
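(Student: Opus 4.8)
The plan is to realize a representing algebra of $\M$ as a connected filtered deformation of a Frobenius graded algebra, so that Theorem~\ref{thm:main-intro} applies. By the theorems of Ostrik \cite[Theorem~3.1]{ostrik2003module} and Etingof--Ostrik \cite[Theorem~3.17]{etingof2003finite}, the exact module category $\M$ is equivalent as a $\C$-module category to the category of modules over the internal End algebra $A := \underline{\End}(M)$ attached to a generator $M \in \M$, so $A$ represents $\M$ in the sense of Definition-Proposition~\ref{defprop:repAlg}. Since every exact module category is a finite direct sum of indecomposable ones and the representing algebra may be assembled blockwise, I would first reduce to $\M$ indecomposable, and then choose the generator $M$ so that the associated internal End algebra is connected, i.e. $F_A(0) = \unit$ for the filtration constructed below.

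Next I would make the representing algebra explicit. By Deligne's theorem \cite[Corollary~0.7]{deligne2002categories}, $\C \simeq \Rep(G,z)$ for a finite supergroup $(G,z)$, and under this identification the description of Etingof--Ostrik \cite[Section~4.2]{etingof2003finite} presents $A$ as an equivariant (twisted function-algebra- or Clifford-type) algebra built from the defining data of $\M$. The heart of the argument is to equip this $A$ with a connected, finite monic filtration $F_A$ whose associated graded $\gr(A)$ is a \emph{degenerate} model --- a (super-)commutative function-algebra- or group-algebra-type object --- for which a categorical Frobenius form can be written down directly. By Theorem~\ref{thm:defFrob-intro} it then suffices to exhibit such a nondegenerate form $\lambda : \gr(A) \to \unit$ in $\C$; for the degenerate model this is the standard top-component (integration) pairing, whose nondegeneracy is classical for finite-dimensional Gorenstein and group algebras.

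With $\gr(A)$ shown to be Frobenius in $\C$, Theorem~\ref{thm:gr-intro} guarantees that the associated graded functor is monoidal and right exact, so that $A$ is genuinely a filtered deformation of $\gr(A)$ in the sense required by Theorem~\ref{thm:main-intro}. Applying that theorem to the connected, finite monic filtered algebra $A$ yields that $A$ is Frobenius in $\C$, and hence $\M$ is represented by the Frobenius algebra $A$, as desired.

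The step I expect to be the main obstacle is the construction of the filtration in the second paragraph together with the verification that $\gr(A)$ is \emph{genuinely} Frobenius. In the symmetric setting the associated graded typically acquires odd (super) directions, and the naive order filtration can produce an exterior-type object whose top component is odd; such an object admits no parity-even, hence no categorical, Frobenius form, and so fails to be Frobenius in $\C$. Overcoming this requires using the explicit data of \cite[Section~4.2]{etingof2003finite} and the freedom in the choice of generator $M$ to arrange that the degeneration $\gr(A)$ is of Clifford- or even-rank-exterior type --- equivalently, that its top-degree pairing is parity-even --- so that the hypotheses of Theorem~\ref{thm:main-intro} are actually met; the remaining monoidal and exactness bookkeeping is then supplied by Theorem~\ref{thm:gr-intro}.
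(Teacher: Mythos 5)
Your overall instinct---find a filtration whose associated graded is a known Frobenius algebra and invoke Theorem~\ref{thm:main-intro}---is the right one, but you are applying it at the wrong level, and the step you yourself flag as the main obstacle hides a gap that cannot be repaired by a choice of generator. The representing algebra supplied by \cite[Theorem~4.5]{etingof2003finite} has the form
$A=(\textnormal{Ind}^G_{\hat{H}})_W\bigl((\textnormal{Ind}_{\hat{H}})_W(\textnormal{Ind}^{\hat{H}}_H(\End(V)))\otimes \textnormal{Cl}_W\bigr)$,
and the induced matrix-algebra factor $\textnormal{Ind}^{\hat{H}}_H(\End(V))$ sits in filtration degree zero. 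Consequently $F_A(0)\neq\unit$ for any reasonable filtration on the full algebra $A$ unless $H=G$ and $V$ is one-dimensional; in the fusion case ($W=0$) the representing algebras are semisimple of dimension $[G:H]\dim(V)^2$ and carry no nontrivial filtration at all, so ``connected'' would force $A=\unit$, i.e.\ $\M=\C$. Connectedness is an essential hypothesis of Theorem~\ref{thm:main} --- the paper explicitly leaves the non-connected case open as Question~\ref{ques:nonconn} --- so your plan to filter all of $A$ and then cite the main theorem fails for essentially every nontrivial module category. Reducing to indecomposable $\M$ does not help, since the matrix-algebra factor is present even then.

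The paper's proof instead isolates the one genuinely filtered piece: only the Clifford algebra $\textnormal{Cl}_W$, which \emph{is} connected with $\gr(\textnormal{Cl}_W)=\Lambda(W)$, is handled by Theorem~\ref{thm:main} (via Lemma~\ref{lem:exterior}, which shows $\Lambda(W)$ is Frobenius in ${\sf Rep}(\hat{H}\ltimes W,u)$). The remaining factors are handled by three auxiliary facts that your proposal is missing entirely: $\End(V)$ is a Frobenius algebra in ${\sf Rep}(H)$ by an explicit matrix-unit comultiplication (Lemma~\ref{lem:endVfrob}); the induction functors appearing in $A$ are Frobenius monoidal and hence carry Frobenius algebras to Frobenius algebras (Lemma~\ref{lem:induction}); and the tensor product of two Frobenius algebras is Frobenius (Lemma~\ref{lem:tens-Frob}). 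Assembling these yields that $A$ itself is Frobenius without ever needing a global filtration on $A$. Your closing worry about parity of the top component is also misplaced relative to the actual difficulty: the issue is not arranging an ``even'' degeneration of the whole algebra, but recognizing that the filtered-deformation theorem should only be asked to do the work on the Clifford factor.
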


This result is achieved as the algebra representatives produced in Etingof-Ostrik's work are the combination of an endomorphism algebra and a braided Clifford algebra in $\mathcal{C}$, via the tensor product operation and induction functors. We show that the endomorphism algebra of interest here is a Frobenius algebra, and that tensor product and induction preserve Frobenius algebras. Finally, the proof of Theorem~\ref{thm:repByFrob-intro} is completed by showing that the braided Clifford algebra of interest here is Frobenius by Theorem~\ref{thm:main-intro}: its associated graded algebra is an exterior algebra, which is known to be Frobenius.

\smallskip

Given Theorem~\ref{thm:repByFrob-intro}, it is also natural to ask:

\begin{question}
What are the precise conditions on a monoidal category $\C$ and a $\C$-module category $\M$ for $\M$ to be represented by a Frobenius algebra $A$ in $\C$?
\end{question}

\noindent
In fact, module categories represented by Frobenius algebras play a vital role in 2-dimensional conformal field theory \cite[Section~3]{Sch-ICM}. We leave this question for the reader to consider.

\medskip

\noindent {\bf Acknowledgements.} The authors thank J\"{u}rgen Fuchs for useful comments and questions, and also thank the anonymous referee for their insightful feedback. The second author   was partially supported by the US National Science Foundation grants DMS-1903192, 2100756.

%%%%%%%%%%%%%%%%%%%%%%%%%%%%%%%%%%%%%%%%%%%%%%%%%%%%%%%%%%%%%%%%%%%%%
%%%%%%%%%%%%%%%%%%%%%%%%%%%%%%%%%%%%%%%%%%%%%%%%%%%%%%%%%%%%%%%%%%%%%
%%%%%%%%%%%%%%%%%%%%%%%%%%%%%%%%%%%%%%%%%%%%%%%%%%%%%%%%%%%%%%%%%%%%%

\section{Algebraic structures in monoidal categories} \label{sec:monoidal}
In this section, we discuss monoidal categories, and various algebraic structures in these categories [Section~\ref{sec:prelim}]. We also discuss such structures in the filtered and graded settings  [Section~\ref{sec:filt-gr}]. 

\begin{hypothesis} \label{hyp:abelian}
We assume that all categories in this work are abelian, and all functors between them are additive.
\end{hypothesis}

%%%%%%%%%%%%%%%%%%%%%%%%%%%%%%%%%%%%%%%%%%%%%%%%%%%%%%%%%%%%%%%%%%%%%

\subsection{Preliminaries} \label{sec:prelim}

With the exception of Definition~\ref{def:weakideal}, we refer the reader to  \cite[Sections~2.1, 7.8 and~8.1]{EGNO} and \cite{FS} for more details about the structures below. We begin by recalling the categories in which we will work throughout.

\begin{definition}[$\C, \otimes, \unit$]
A \textit{monoidal category} $\mathcal{C}:=(\C, \otimes, \unit, \alpha, l, r)$ consists of the following data: a category $\C$, a bifunctor $\otimes : \C \times \C\rightarrow \C$,
an object $\unit \in \C$, 
a natural associativity isomorphism   $\alpha_{X,X',X''}: (X \otimes X')\otimes X'' \overset{\sim}{\to} X \otimes (X'\otimes X'')$ for each $X,X',X'' \in \C$, 
natural unitality isomorphisms  $l_X: \mathbbm{1} \otimes X \overset{\sim}{\to} X, \:\: r_X:X \otimes \mathbbm{1} \overset{\sim}{\to} X$ for each $X \in \C$,
such that the pentagon and triangle coherence conditions are satisfied.
\end{definition}

\begin{hypothesis} \label{hyp:strict} We assume that all monoidal categories here are/ have: 
\begin{enumerate}
    \item {\it strict}, in the sense that the associativity and unitality isomorphisms are equalities;
    
    \smallskip
    
    \item $\otimes$ is {\it biexact}, in the sense that the functors $(X \otimes -)$ and $(- \otimes X) : \C \to \C$ are exact, for each object $X$ in $\C$.
\end{enumerate}
\end{hypothesis}

\begin{definition}[$F,F_2,F_0$]  \label{def:monfunc}
Let $(\C, \otimes_{\C}, \unit_{\C})$ and  $(\mathcal{D},\otimes_{\mathcal{D}},\unit_{\mathcal{D}})$ be monoidal categories.
A \textit{monoidal functor} $(F, F_2, F_0): \C \to \mathcal{D}$ consists of the following data:
    \begin{itemize}
        \item a functor $F: \C \to \mathcal{D}$,
        \item a natural transformation $F_2= \{F_2(X,X') : F(X)\otimes_{\mathcal{D}} F(X') \to F(X \otimes_{\C} X')\}_{X,X' \in \C}$,
        \item a morphism $F_0: \unit_{\mathcal{D}} \to F(\unit_{\C})$ in $\mathcal{D}$,
    \end{itemize}
    that satisfy the following associativity and unitality constraints, for $X,X',X'' \in \C$:
    \[
    \begin{array}{rl}
    \medskip
    F_2(X,X'\otimes_\C X'')(\id_{F(X)} \otimes_{\mathcal{D}} F_2(X',X'')) &= F_2(X \otimes_\C X',X'')(F_2(X,X')\otimes_{\mathcal{D}}\id_{F(X'')}),\\
    \medskip
    F_2(\unit_\C,X)  (F_0 \otimes_{\mathcal{D}} \id_{F(X)}) &= \id_{F(X)},\\
        \medskip
   F_2(X,\unit_\C)  (\id_{F(X)} \otimes_{\mathcal{D}} F_0)  &= \id_{F(X)}.
    \end{array}
    \]
\end{definition}

Next, we consider certain algebraic structures within monoidal categories.

\begin{definition}[$m, u, {\sf Alg}(\C), \Delta, \ep$] \label{def:algC}
Take $\C$ to be a monoidal category, and consider the (categories of) algebraic structures below.
\begin{enumerate}
\item An \textit{algebra} in $\C$ is a triple $(A,m,u)$ consisting of an object  $A\in \C$, and morphisms $m: A \otimes  A \rightarrow A$, $u: \mathbbm{1} \rightarrow A$  in $\C$, satisfying associativity and unitality constraints: $m(m\otimes \text{id}_A) = m(\text{id}_A \otimes m)$, and $m(u\otimes \text{id}_A) =\text{id}_A$ = $m(\text{id}_A\otimes u)$. A \textit{morphism} of algebras $(A,m_A,u_A)$ and $(B,m_B,u_B)$ is a morphism $f:A\rightarrow B$ in $\C$ so that $fm_A = m_B(f\otimes f)$ and $fu_A=u_B$. 
Algebras and their morphisms in $\C$ form a category, which we denote by $\textsf{Alg}(\C)$.

\medskip

\item A \textit{coalgebra} in $\C$ is a triple $(A,\Delta,\varepsilon)$ consisting of an object  $A \in \C$, and morphisms $\Delta: A \rightarrow A \otimes  A$,  $\varepsilon:  A \to \mathbbm{1}$  in $\C$, satisfying coassociativity and counitality constraints: $(\Delta\otimes \text{id}_A)\Delta = (\text{id}_A \otimes \Delta) \Delta$ and $(\varepsilon\otimes \text{id}_A)\Delta =\text{id}_A$ = $(\text{id}_A\otimes \varepsilon)\Delta$. A \textit{morphism} of coalgebras $(A,\Delta_A,\varepsilon_A)$ and $(B,\Delta_B,\varepsilon_B)$ is a morphism $f:A\rightarrow B$ in $\C$ so that $\Delta_B f = (f\otimes f)\Delta_A$ and $\varepsilon_B f= \varepsilon_A$. Coalgebras  and their morphisms in $\C$ form a category, which we denote by $\textsf{Coalg}(\C)$.

\medskip

\item A \textit{Frobenius algebra} in $\C$ is a 5-tuple $(A,m,u,\Delta,\varepsilon)$ where $(A,m,u) \in {\sf Alg}(\C)$ and $(A,\Delta,\varepsilon) \in {\sf Coalg}(\C)$ so that $(m \otimes \text{id}_A)(\text{id}_A \otimes \Delta) = \Delta m = (\text{id}_A \otimes m)(\Delta \otimes \text{id}_A)$. A \textit{morphism} of Frobenius algebras $f:A\rightarrow B$ is a map  in ${\sf Alg}(\C) \cap {\sf Coalg}(\C)$.
Frobenius algebras and their morphisms in $\C$ form a category.
\end{enumerate}
\end{definition}

\begin{definition}[$\lambda, {}_A\C, \rho, \C_A, {}_A \C_A$] Take a monoidal category $\C$ with an algebra $A:=(A,m,u)$ in $\C$. 
\begin{enumerate}
    \item A \textit{left $A$-module} in $\C$ is a pair $(M,\lambda^A_M:=\lambda_M)$ consisting of  an object $M$ and a morphism $\lambda_M:A\otimes M \rightarrow M$ in $\C$ satisfying $\lambda_M(m\otimes \text{id}_M) = \lambda_M(\text{id}_A \otimes \lambda_M)$ and $\lambda_M(u \otimes \text{id}_M)=\text{id}_M$. A \textit{morphism} of left $A$-modules $(M,\lambda_M) \rightarrow (N,\lambda_N)$ is a morphism $f:M \rightarrow N$ in $\C$ such that $\lambda_N(\text{id}_A \otimes f) = f \lambda_M$. Left $A$-modules and their morphisms form a category, which we will denote by ${}_A\C$.
    
    \medskip
    
    \item  A \textit{right $A$-module} in $\C$ is a pair $(M, \rho_M^A:=\rho_M)$ consisting of  an object $M$ and a morphism $\rho_M^A:=\rho_M:M \otimes A \rightarrow M$ in $\C$ satisfying $\rho_M(\text{id}_M \otimes m) = \rho_M(\rho_M \otimes \text{id}_A)$ and $\rho_M(\text{id}_M \otimes u)=\text{id}_M$. A \textit{morphism} of right $A$-modules $(M,\rho_M) \rightarrow (N,\rho_N)$ is a morphism $f:M \rightarrow N$ in $\C$ such that $\rho_N(f \otimes \text{id}_A) = f \rho_M$. Right $A$-modules and their morphisms form a category, which we will denote by $\C_A$.
    
      \medskip
    
    \item An \textit{$A$-bimodule} in $\C$ is a triple $(M, \lambda_M, \rho_M)$ where $(M,\lambda_M) \in {}_A \C$ and $(M, \rho_M) \in \C_A$, so that $\rho_M(\lambda_M \otimes \id_A) = \lambda_M(\id_A \otimes \rho_M)$. A \textit{morphism} of $A$-bimodules is a morphism in $\C$ that belongs to both ${}_A\C$ and $\C_A$. The collection of $A$-bimodules and their morphisms form a category, which we will denote by ${}_A\C_A$.
\end{enumerate}
\end{definition}

Now we discuss various notions of an ideal of an algebra, including our introduction of (one-sided) weak ideals.

\begin{definition}[$\phi$] \label{def:weakideal}
Take an algebra $A:=(A,m,u)$ in a monoidal category $\C$, and recall that $A$ is in ${}_A \C$ (resp., $\C_A$) by using $\lambda_A^A = m_A$ (resp., using $\rho_A^A = m_A$). 
\begin{enumerate}
    \item An object $(I,\lambda_I) \in {}_A \C$ is said to be {\it left weak ideal} of $A$ if there exists a morphism $\phi_I^A:=\phi_I: I \to A$ in ${}_A \C$. That is, $\phi_I \in \C$ with $ \phi_I \lambda_I  =m_A(\id_A \otimes \phi_I)$.
    
    \medskip
    
    \item An object $(I,\rho_I) \in \C_A$ is said to be {\it right weak ideal} of $A$ if there exists a morphism $\phi_I^A:=\phi_I: I \to A$ in $\C_A$. That is, $\phi_I \in \C$ with $ \phi_I \rho_I  =m_A(\phi_I \otimes \id_A)$.
    
    \medskip
    
    \item A left (right) weak ideal $(I, \lambda_I, \phi_I)$ (resp., $(I, \rho_I, \phi_I)$) is called a {\it left (resp., right) ideal} of $A$ if $\phi_I$ is monic.
    
    \medskip
    
    \item We call an $A$-bimodule $(I, \lambda_I, \rho_I)$ a {\it (weak) ideal} of $A$ if it comes equipped with a morphism $\phi_I: I \to A$ so that $(I, \lambda_I, \phi_I)$ is a left (resp., weak) ideal of $A$ and $(I, \rho_I, \phi_I)$ is a right (resp., weak) ideal of $A$.
\end{enumerate}
\end{definition}

Monoidal functors preserve algebras, modules, and left/right weak ideals as we see below.

\begin{proposition} \label{prop:mon-preserve}
Let $(F,F_2,F_0):\C \to \D$ be a monoidal functor. Take $(A,m,u) \in {\sf Alg}(\C)$.
\begin{enumerate}[(a),font=\upshape]
    \item Then, $(F(A),  F(m)F_2(A,A), F(u)F_0) \in {\sf Alg}(\D)$.
    
    \medskip
    
    \item If $(M,\lambda_M^A) \in {}_A \C$, then $(F(M),\lambda_{F(M)}^{F(A)}) \in {}_{F(A)} \D$, where $\lambda_{F(M)}^{F(A)} = F(\lambda_M^A) F_2(A,M)$. Likewise,  one gets a right $F(A)$-module in $\D$ from a right $A$-module in $\C$.
     
    \medskip
    
    \item If $(I,\lambda_I^A, \phi_I^A)$ is a left weak ideal of $A$, then $(F(I),\lambda_{F(I)}^{F(A)}, \phi_{F(I)}^{F(A)})$ is a left weak ideal of $F(A)$, where $\phi_{F(I)}^{F(A)} = F(\phi_I^A)$. Likewise, one gets a right weak ideal of  $F(A)$ in $\D$ from a right weak ideal of $A$ in $\C$.
    
    \medskip
    
    \item Suppose that $(I,\lambda_I^A, \phi_I^A)$  is a left ideal of $A$. Then $(F(I),\lambda_{F(I)}^{F(A)}, \phi_{F(I)}^{F(A)})$ is a left ideal of $F(A)$ if and only if $F$ preserves monomorphisms. We have a similar statement for the preservation of right ideals.
\end{enumerate}
\end{proposition}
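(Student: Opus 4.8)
The plan is to verify each of the four claims (a)--(d) by directly transporting the relevant structure morphisms through $F$ and checking that the required axioms hold, using only the monoidal functor constraints from Definition~\ref{def:monfunc} together with the naturality of $F_2$. The whole proof is a sequence of diagram-chases, so the real content is organizational: set up the candidate structure maps, then reduce each axiom to one application of the associativity/unitality constraints on $(F,F_2,F_0)$ plus naturality of $F_2$.

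For part (a), I would take $m' := F(m)\,F_2(A,A)$ and $u' := F(u)\,F_0$ as the candidate multiplication and unit on $F(A)$. To prove associativity of $m'$, the key step is to rewrite $m'(m'\otimes \id)$ and $m'(\id\otimes m')$ and show both equal $F(m(m\otimes\id))\,F_2(A,A\otimes A)\,(\id\otimes F_2(A,A))$; this uses naturality of $F_2$ (to move $F(m)$ past the tensor factor, i.e.\ $F_2(A,A)(F(m)\otimes\id_{F(A)}) = F(m\otimes\id_A)\,F_2(A\otimes A,A)$) followed by the hexagon-like associativity constraint on $F_2$, and then the associativity of $m$ in $\C$. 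Unitality follows the same way from the two unit constraints $F_2(\unit_\C,X)(F_0\otimes\id)=\id$ and $F_2(X,\unit_\C)(\id\otimes F_0)=\id$. Part (b) is analogous but shorter: with $\lambda' := F(\lambda_M^A)\,F_2(A,M)$, the module-associativity axiom $\lambda'(m'\otimes\id)=\lambda'(\id\otimes\lambda')$ reduces, again via naturality of $F_2$ and the $F_2$-associativity constraint, to $F$ applied to the module axiom in $\C$; the right-module case is symmetric.

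For part (c), the extra datum is the map $\phi_I:I\to A$, and the claim is that $F(\phi_I)$ witnesses $F(I)$ as a left weak ideal of $F(A)$, i.e.\ that $F(\phi_I)$ is a morphism in ${}_{F(A)}\D$. Concretely I must check $F(\phi_I)\,\lambda_{F(I)}^{F(A)} = m'\,(\id_{F(A)}\otimes F(\phi_I))$. Expanding both sides, the left side is $F(\phi_I)\,F(\lambda_I)\,F_2(A,I)=F(\phi_I\lambda_I)\,F_2(A,I)$, and applying $F$ to the defining identity $\phi_I\lambda_I = m(\id_A\otimes\phi_I)$ converts this to $F(m)\,F(\id_A\otimes\phi_I)\,F_2(A,I)$; one application of naturality of $F_2$ to commute $F(\id_A\otimes\phi_I)$ with $F_2$ turns this into $F(m)\,F_2(A,A)\,(\id_{F(A)}\otimes F(\phi_I)) = m'(\id_{F(A)}\otimes F(\phi_I))$, as desired. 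The right weak ideal case is symmetric.

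Part (d) is essentially a definitional observation rather than a computation: by part (c), $(F(I),\lambda_{F(I)}^{F(A)},F(\phi_I))$ is always a left \emph{weak} ideal, and by Definition~\ref{def:weakideal}(c) it is a left \emph{ideal} precisely when the witness $\phi_{F(I)}^{F(A)}=F(\phi_I)$ is monic. Since $\phi_I$ is monic by hypothesis, $F(\phi_I)$ is monic if and only if $F$ preserves the monomorphism $\phi_I$; this is immediate in the ``if'' direction, and for ``only if'' one observes that the statement is required to hold for all left ideals, so in particular it must hold for $\phi_I$ itself, giving the equivalence. I do not expect any genuine obstacle here; the only mild care needed throughout is bookkeeping in the naturality-of-$F_2$ steps in parts (a)--(c), where one must correctly identify which tensor factor $F(m)$ or $F(\phi_I)$ is being commuted past, and the strictness assumption (Hypothesis~\ref{hyp:strict}) is what lets me suppress associators and unitors in $\D$ so that these chases close cleanly.
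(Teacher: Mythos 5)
Your proposal is correct and follows essentially the same route as the paper: the paper cites part (a) as well-known (via Day--Pastro), treats (b) analogously, proves (c) by exactly the chain of equalities you give (naturality of $F_2$ plus the weak-ideal identity $\phi_I\lambda_I=m(\id_A\otimes\phi_I)$, just written in the reverse direction), and dispatches (d) from (c) and the definition of a left ideal. The only difference is that you spell out the associativity/unitality chases for (a) and (b) that the paper delegates to a reference.
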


\begin{proof}
Part (a) is well-known; see, e.g., \cite[Corollary~5]{DayPas}. Part (b) follows in a similar manner to part (a). For part (c), it suffices to verify that $\phi_{F(I)}^{F(A)} \in {}_{F(A)} \D$, which we achieve via the following computation:
\[
\begin{array}{rl}
\smallskip
\lambda_{F(A)}^{F(A)}\; (\id_{F(A)} \otimes F(\phi_I^A)) &= m_{F(A)} \; (\id_{F(A)} \otimes F(\phi_I^A)) \\
\smallskip
&= F(m_A) \; F_2(A,A) \; (\id_{F(A)} \otimes F(\phi_I^A))  \\
\smallskip
&= F(\lambda_A^A) \; F_2(A,A) \; (\id_{F(A)} \otimes F(\phi_I^A)) \\
\smallskip
&= F(\lambda_A^A) \; F(\id_A \otimes \phi_I^A) \; F_2(A,I) \\
\smallskip
&=  F(\phi_I^A) \; F(\lambda_I^A) \; F_2(A,I) 
\\
\smallskip
&=  F(\phi_I^A) \; \lambda_{F(I)}^{F(A)}. 
\end{array}
\]
In particular, the fourth equality holds by the naturality of $F_2$ and the fifth equality holds as $\phi_I^A \in {}_A \C$. Part (d) follows from part (c) and the definition of a left (resp., right) ideal.
\end{proof}

Now we turn our attention to braided categories and algebraic structures within them.

\begin{definition}[$c_{X,Y}$] \label{defn:brMon}
A \textit{braiding} on a monoidal category $(\C,\otimes, \one)$ is a natural family of isomorphisms $$c = \{ c_{X,Y}: X\otimes Y \overset{\sim}{\rightarrow} Y \otimes X \}_{X,Y\in \textnormal{Ob}(\C)},$$ such that
$c_{X,Y\otimes Z} = (\id_Y \otimes c_{X,Z})(c_{X,Y} \otimes \id_Z)$ and $c_{X\otimes Y,Z} = (c_{X,Z} \otimes \id_Y)(\id_X \otimes c_{Y,Z})$
hold for all $X,Y,Z \in \textnormal{Ob}(Z)$. A \textit{braided monoidal category} is a monoidal category equipped with a braiding $c$. 
\end{definition}

\begin{definition}\label{defn:brMonFunc}
Let $(\C,\otimes _{\C},\one_{\C},c^{\C})$ and $(\D,\otimes_{\D}, \one_{\D},c^{\D})$ be braided monoidal categories. A monoidal functor $(F, F_2, F_0):\C \rightarrow \D$ is called \textit{braided} if it satisfies
\begin{equation*}
F_2(Y,X)\; c^{\D}_{F(X),F(Y)} = F(c^{\C}_{X,Y}) \; F_2(X,Y),
\end{equation*}
for all $X,Y \in \C$.
\end{definition}

\begin{definition}\label{defn:brComAlg}
Let $(\C,\otimes,\one,c)$ be a braided monoidal category. An algebra $(A,m,u)$ in $\C$ is called \textit{(braided) commutative} if it satisfies: $m\; c_{A,A} = m$.
\end{definition}

\begin{proposition} \label{prop:commalg}
Let $(F,F_2,F_0):\C \to \D$ be a braided monoidal functor. Take $(A,m_A,u_A)$ a commutative algebra in $\C$,
then $(F(A),  F(m_A)F_2(A,A), F(u_A)F_0)$ is a commutative algebra in $\D$. 
\end{proposition}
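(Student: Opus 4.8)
The plan is to reduce the statement to a single verification: by Proposition~\ref{prop:mon-preserve}(a), the triple $(F(A), F(m_A)F_2(A,A), F(u_A)F_0)$ is already known to be an algebra in $\D$, so the only thing left to establish is braided commutativity in the sense of Definition~\ref{defn:brComAlg}. Writing $m_{F(A)} := F(m_A)\,F_2(A,A)$ for the induced multiplication, I need to show $m_{F(A)}\, c^{\D}_{F(A),F(A)} = m_{F(A)}$.

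First I would specialize the braided-functor compatibility of Definition~\ref{defn:brMonFunc}, namely $F_2(Y,X)\, c^{\D}_{F(X),F(Y)} = F(c^{\C}_{X,Y})\, F_2(X,Y)$, to the case $X = Y = A$, which yields $F_2(A,A)\, c^{\D}_{F(A),F(A)} = F(c^{\C}_{A,A})\, F_2(A,A)$. Then the computation proceeds by substitution and functoriality:
\[
m_{F(A)}\, c^{\D}_{F(A),F(A)}
= F(m_A)\, F_2(A,A)\, c^{\D}_{F(A),F(A)}
= F(m_A)\, F(c^{\C}_{A,A})\, F_2(A,A)
= F\big(m_A\, c^{\C}_{A,A}\big)\, F_2(A,A).
\]
Finally, I would invoke the commutativity of $A$ in $\C$, i.e. $m_A\, c^{\C}_{A,A} = m_A$, so that $F(m_A\, c^{\C}_{A,A}) = F(m_A)$, giving $m_{F(A)}\, c^{\D}_{F(A),F(A)} = F(m_A)\, F_2(A,A) = m_{F(A)}$, as desired.

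This argument is essentially a one-line chaining of the braided compatibility of $F$ with the functoriality of $F$ and the hypothesis on $A$, so there is no substantial obstacle. The only point requiring care is getting the index placement in the braided-functor axiom correct; since both slots are occupied by $A$, the symmetric choice $X=Y=A$ makes the compatibility directly applicable, and no naturality squares or coherence diagrams beyond Definition~\ref{defn:brMonFunc} are needed.
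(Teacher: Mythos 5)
Your proposal is correct and follows essentially the same route as the paper: both reduce to verifying $m_{F(A)}\,c^{\D}_{F(A),F(A)} = m_{F(A)}$ by specializing the braided-functor compatibility to $X=Y=A$, then chaining it with functoriality of $F$ and the commutativity of $A$. The paper's proof is exactly this four-step computation, so there is nothing to add.
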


\begin{proof}
We have that 
\[
\begin{array}{rll}
\smallskip
m_{F(A)} \; c_{F(A),F(A)}^{\mathcal{D}} 
&= F(m_A) \; F_2(A,A) \; c_{F(A),F(A)}^{\mathcal{D}}
&= F(m_A) \; F(c_{A,A}^{\C}) \; F_2(A,A) \\
\smallskip
&= F(m_A \; c_{A,A}^{\C}) \; F_2(A,A)
&= F(m_A) \; F_2(A,A) \; \; = m_{F(A)}.
\end{array}
\]
Here, the first and last equalities hold by the definition of $m_{F(A)}$. Moreover, the second, third, and fourth equalities hold by $F$ being braided and preserving compositions, and by $A$ being commutative, respectively.
\end{proof}

%%%%%%%%%%%%%%%%%%%%%%%%%%%%%%%%%%%%%%%%%%%%%%%%%%%%%%%%%%%%%%%%%%%%%

\subsection{Filtered and graded categories} \label{sec:filt-gr}

For this subsection, recall Hypothesis~\ref{hyp:abelian} and~\ref{hyp:strict} for a monoidal category $\C$, but the material in this section applies to the more general setting of cocomplete monoidal categories. Articles that prompt the framework of this part  include \cite{Sch1996}, \cite{BD}, \cite{ArMe}, \cite[Section~3.3]{HaMi}, \cite{GwPa}, and \cite[Section~5]{GKR}.

\begin{notation}[$\textsf{Fun}(\ess,\C), \mathbb{N}_0, \ncat, \Ncat$] Consider the following notation.
\begin{enumerate}
\item Let $\ess$ denote any posetal category, that is, a category where Hom sets each contains at most one morphism and only isomorphisms are the identity maps.

\smallskip

\item Denote by $\textsf{Fun}(\ess,\C)$ the category of functors from $\ess$ to $\C$.

\smallskip

\item Let $\mathbb{N}_0$ denote the set of natural numbers including 0. 

\smallskip

\item Let $\ncat$ denote the category with objects $\mathbb{N}_0$ and with morphisms $i \rightarrow j$ only for $i,j\in \mathbb{N}_0$ where $i\leq j$. 

\smallskip

\item Let $\Ncat$ denote the category with objects $\mathbb{N}_0$ and with only identity morphisms $\id_i$ for all $i \in \mathbb{N}_0$. 
\end{enumerate}
Note that both $\ncat$ and $\Ncat$ are examples of  posetal categories $\ess$. 
\end{notation}

Next, we define a category of $\mathbb{N}_0$-filtered objects in $\C$. 

\begin{definition}[$F_X, (\mathbb{N}_0\text{-}){\sf Fil}(\C)$] 
\label{def:fil}
Consider the following terminology. 
\begin{enumerate}
\item An object $X$ in $\C$ is called an ($\mathbb{N}_0$-)\textit{filtered object} if there exists a functor $F_X$ in ${\sf Fun}(\ncat, \C)$ such that $\text{colim}_i(F_X(i)) \cong X$ in $\C$. In this case, $F_X$ is called the \textit{filtration} associated to $X$. 

\smallskip

\item A morphism $f:X\rightarrow Y$ between filtered objects $(X, F_X)$ and $(Y,F_Y)$ in $\C$ is called an ($\mathbb{N}_0$-)\textit{filtered morphism} if there exists a natural transformation $$F_f=\{ F_f(i):F_X(i) \rightarrow F_Y(i) \}_{i\in \mathbb{N}_0}$$ in $\textsf{Fun}(\ncat,\C)$ such that $\text{colim}_i(F_f(i))=f$.

\smallskip

\item Filtered objects in $\C$ and their morphisms  form a category, denoted by ($\mathbb{N}_0$-)$\textsf{Fil}(\C)$. 

\smallskip

\item We say that a filtration on a filtered object $X$ is \textit{finite} if there exists $n \in \mathbb{N}_0$ such $F_X(i)\cong F_X(n)$ in $\C$ for all $i \geq n$. In this case, $X\cong F_X(n)$ as objects in $\C$.

\smallskip

\item We say that a filtration on a filtered object $X$ is \textit{monic} if the morphism $F_X(i \rightarrow i+1)$ is monic for each $i \in \mathbb{N}_0$.
\end{enumerate}
\end{definition}

From the definition above, we set the notation below.

\begin{notation}[$\iota^X_i$, $\psi^X_i$] \label{not:incl} Take a filtered object $(X,F_X) \in \textsf{Fil}(\C)$. 
\begin{enumerate}
    \item Let $\iota^X_i$ denote the morphism $F_X(i \rightarrow i+1)$ in $\C$, for each $i \in \mathbb{N}_0$.
    
    \smallskip
    
    \item Let $\psi^X_i: F_X(i)  \rightarrow X$ denote the canonical map derived from $\text{colim}_i(F_X(i)) \cong X$, i.e., $\psi^X_i = \psi^X_{i+1}\; \iota^X_i$ for all $i \in \ncat$.
\end{enumerate}
\end{notation}

Next, towards defining a category of graded objects in $\C$, consider the following notation.

\begin{notation}[$\C^{\mathbb{N}_0}$] Let $\C^{\mathbb{N}_0}$ be the category of sequences of objects $(X_i)_{i\in \mathbb{N}_0}$ in $\C$, and with morphisms being $\mathbb{N}_0$-graded sequences of morphisms in $\C$. Observe that we have a functor $\textstyle \C^{\mathbb{N}_0} \rightarrow \text{Ind}(\C),  (X_i)_{i\in \mathbb{N}_0} \mapsto \textstyle  \coprod_{i \in \mathbb{N}_0} X_i,$ where $\text{Ind}(\C)$ denotes the Ind-completion of $\C$. 
\end{notation}

\begin{definition}[$(\mathbb{N}_0\text{-})\textsf{Gr}(\C)$] 
\label{def:gr}
Consider the following terminology.
\begin{enumerate}
\item An object $X$ in $\C$ is called an ($\mathbb{N}_0$-)\textit{graded object} if there exists an object $(X_i)_{i \in \mathbb{N}_0}$ in $\C^{\mathbb{N}_0}$ such that $X=\coprod_{i \in \mathbb{N}_0} X_i$. 

\medskip

\item A morphism $f:X\rightarrow Y$ between graded objects $X = \coprod_{i \in \mathbb{N}_0} X_i$ and $Y = \coprod_{i \in \mathbb{N}_0} Y_i$ is called an ($\mathbb{N}_0$-)\textit{graded morphism} if there exists a morphism $(f_i:X_i \rightarrow Y_i)_{i \in \mathbb{N}_0}$ in $\C^{\mathbb{N}_0}$ such that $f=\coprod_{i\in \mathbb{N}_0} f_i$.

\medskip

\item Graded objects  in $\C$ and their morphisms  form a subcategory of $\C$, which we denote by ($\mathbb{N}_0$-)$\textsf{Gr}(\C)$.
\end{enumerate}
\end{definition}

\begin{remark} \label{rem:Ncat} 
\begin{enumerate}
    \item One can also define the category $\mathbb{N}_0$-$\textsf{Gr}(\C)$ in a manner similar to $\mathbb{N}_0$-${\sf Fil}(\C)$ by replacing the posetal category $\ncat$ by $\Ncat$.

\smallskip

\item Moreover, $\mathbb{N}_0$ could be replaced with any monoid $G$ to form the posetal category $\ess = \underline{G}$  in order to define $G$-${\sf Gr}(\C)$ as in Definition~\ref{def:gr}. Here, $\underline{G}$ denotes the category with objects being elements of $G$ and with only identity morphisms $\id_g$ for each $g \in G$. If, further, we have that $G$ is a poset, then we can define the category $G$-${\sf Fil}(\C)$ in the same manner as Definition~\ref{def:fil}. 
    \end{enumerate} 
\end{remark}

Now, we show that the categories constructed above are monoidal. This result is known in the literature; see the references listed at the beginning of this section for details. We will only provide sketches of proofs here. 

\begin{proposition} \label{prop:moncats}
Let $\C$ be a (braided) monoidal category. Then, each of the categories
\begin{center}
\textnormal{(a)} $\ncat$, 
%and $\underrightarrow{\mathbb{N}}$,  
\quad \quad \quad \quad \textnormal{(b)} ${\sf Fun}(\ncat,\C)$, 
%and ${\sf Fun}(\underrightarrow{\mathbb{N}},\C)$,
\quad \quad \quad \quad \textnormal{(c)} ${\sf Fil}(\C)$, \quad \quad \quad \quad \textnormal{(d)} ${\sf Gr}(\C)$
\end{center}
admit the structure of a (braided) monoidal category.
\end{proposition}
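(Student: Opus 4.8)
The plan is to endow each of the four categories with monoidal structure in order of increasing complexity, reusing the structure on earlier categories to build the structure on later ones. I would treat parts (a) and (b) as the foundation, then derive (c) and (d) from (b) by transporting structure along the defining functors (\mathrm{colim} for filtered objects, \coprod for graded objects).

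\textbf{Part (a).} For $\ncat$ the monoidal product is addition: set $i \otimes j := i+j$, with unit object $0$. On morphisms, the unique map $i \to i'$ (present when $i \le i'$) tensored with the unique map $j \to j'$ yields the unique map $i+j \to i'+j'$, which exists since $i \le i'$ and $j \le j'$ imply $i+j \le i'+j'$. Associativity and unitality are strict because addition on $\mathbb{N}_0$ is strictly associative with identity $0$; coherence is automatic since $\ncat$ is posetal (all diagrams of parallel maps commute). If a braiding is wanted, commutativity of addition gives the (trivial, identity) braiding $c_{i,j} = \id_{i+j}$.

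\textbf{Part (b).} For $F = {\sf Fun}(\ncat,\C)$, I would use the Day convolution product: for functors $X_\bullet, Y_\bullet : \ncat \to \C$, define
\[
(X_\bullet \otimes Y_\bullet)(n) \;=\; \int^{i,j \in \ncat} \ncat(i+j, n) \cdot \bigl(X_\bullet(i) \otimes Y_\bullet(j)\bigr),
\]
where the copower $\ncat(i+j,n)\cdot(-)$ collapses to $(-)$ when $i+j \le n$ and to the initial object otherwise. Since $\C$ is cocomplete (as flagged at the start of the subsection) this coend exists, and Day's theorem makes $F$ monoidal with unit the functor represented by $0\in\ncat$; a braiding on $\C$ together with the braiding on $\ncat$ from (a) induces a braiding by functoriality of Day convolution. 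The expected main obstacle lives here: verifying the coherence of Day convolution honestly requires unwinding coends, but since the proposition only asks for sketches I would cite the standard references (\cite{BD}, \cite{GwPa}) and record the explicit coend formula.

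\textbf{Parts (c) and (d).} These I would obtain by transporting the structure from (b). For ${\sf Gr}(\C)$, the functor $\C^{\mathbb{N}_0} \to \mathrm{Ind}(\C),\ (X_i) \mapsto \coprod_i X_i$ identifies graded objects with sequences, and I define the tensor product by the convolution grading
\[
(X \otimes Y)_n \;=\; \coprod_{i+j=n} X_i \otimes Y_j,
\]
with unit the sequence concentrated in degree $0$ equal to $\unit$; since $\otimes$ is biexact (Hypothesis~\ref{hyp:strict}) it distributes over the coproducts, so $X\otimes Y = \coprod_n (X\otimes Y)_n$ is again graded, and associativity/unitality follow degreewise from those in $\C$. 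For ${\sf Fil}(\C)$, given filtered objects $(X,F_X)$ and $(Y,F_Y)$ I define the filtration on $X \otimes Y$ by
\[
F_{X\otimes Y}(n) \;=\; \mathrm{colim}_{\,i+j \le n}\ \bigl(F_X(i) \otimes F_Y(j)\bigr),
\]
i.e.\ the image of the Day-convolution product under $\mathrm{colim}$, with $\mathrm{colim}_n F_{X\otimes Y}(n) \cong X\otimes Y$ by cofinality and biexactness of $\otimes$. In both cases the braided enhancement uses the braiding $c$ on $\C$ applied factorwise, combined with the symmetry $i+j = j+i$ on indices. I would close by noting that all coherence diagrams reduce to the corresponding diagrams in $\C$ (or in $F$), so strictness/coherence is inherited, and that the whole construction is just the image of the Day convolution of (b) under the monoidal functors $\mathrm{colim}$ and $\coprod$ — which is why (b) is the genuine content and (c), (d) are formal consequences.
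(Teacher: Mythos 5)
Your proposal matches the paper's proof in all essentials: addition with unit $0$ and identity braiding on $\ncat$, Day convolution on ${\sf Fun}(\ncat,\C)$ (your coend formula collapses to the paper's $\mathrm{colim}_{i+j\le k}(F(i)\otimes G(j))$ precisely because $\ncat$ is posetal), the induced filtration $F_{X\otimes Y}(k)=\mathrm{colim}_{i+j\le k}F_X(i)\otimes F_Y(j)$ on ${\sf Fil}(\C)$, and the degreewise convolution product $\coprod_{i+j=n}X_i\otimes Y_j$ on ${\sf Gr}(\C)$, with braidings induced factorwise. This is the same construction and the same level of detail (a sketch citing Day convolution) as the paper's argument, so the proposal is correct.
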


\begin{proof}
(a) 
We can endow the category $\ncat$ with 
%strict 
monoidal structure by defining $i \otimes j := i+j$, with unit object is $\mathbbm{1}_{\ncat} = 0$. The braiding map is $c_{i,j}=\id_{i+j}$.

\smallskip

(b) Using Day convolution \cite[page~29]{Day},
we can endow the category $\textsf{Fun}(\ncat,\C)$ with a monoidal structure as follows: given $F, G \in \textsf{Fun}(\ncat,\C)$, define 
\begin{equation*}
%\label{eq:tensor}
    (F \otimes G) (k) := \text{colim}_{i+j\leq k} (F(i) \otimes G(j));
\end{equation*}
 here, the unit object is the functor from $\ncat$ to the trivial monoidal subcategory $\one_{\C}$. %See, e.g., \cite[Section~3.3]{HaMi} or \cite[Section~2.23]{GwPa}. Let $\iota_{F,G}^{i,j}: F(i)\otimes G(j) \rightarrow (F\otimes G)(i+j)$ denote the natural map into the colimit.

When $\C$ is braided, we have maps $c_{F(i),G(j)}:F(i) \otimes G(j) \rightarrow G(j) \otimes F(i)$. Thus, by universal property of colimits, we get maps $(F\otimes G)(k) \rightarrow (G \otimes F)(k)$ to form a braiding on  
$\textsf{Fun}(\ncat,\C)$ making it a braided monoidal category. (See  \cite[Section~2, Example~5]{joyal1986braided}).
%\blue{[see note 20200928-Day-conv for proof of $\C$ strict case]}
\smallskip

(c)  Using the monoidal structure on $\textsf{Fun}(\ncat,\C)$ in part (b), we can endow $\textsf{Fil}(\C)$ with a monoidal structure as follows. For $(X,F_X), (Y,F_Y) \in \text{Ob({\sf Fil}(}\C))$, define  
\[ 
\qquad (X,F_X) \otimes (Y,F_Y) = (X\otimes Y, F_{X\otimes Y}), \qquad \text{where }
\]
\begin{equation}\label{eq:tens-filt}
F_{X\otimes Y}(k):=\text{colim}_{i+j\leq k} F_X(i)\otimes F_Y(j).
\end{equation}
One can check that colim${}_{i}F_{X\otimes Y}(i)$ is isomorphic to $X\otimes Y$, thus the above definition is well defined.
The unit object is $\mathbbm{1}$ with the associated filtration $F_{\mathbbm{1}}: \mathbb{N}_0 \rightarrow \C$ given by $F_{\mathbbm{1}}(i)=\mathbbm{1}$ for all $i \in \mathbb{N}_0$.
%\blue{[see note 20200929-Fil(C)-Gr(C)-are-monoidal for proof in $\C$ strict case]}

Now suppose that $\C$ is braided. Using the braiding on $\textsf{Fun}(\ncat,\C)$ in part (b), we get maps, for each $k \in \mathbb{N}_0$,
\begin{equation}\label{eq:tau}
    \tau_{X,Y}(k) : F_{X\otimes Y}(k) \rightarrow F_{Y\otimes X}(k).
\end{equation}
One can check that 
\begin{equation} \label{eq:tau-c}
    \text{colim${}_k\tau_{X,Y}(k) = c_{X,Y}$.}
\end{equation} 
Furthermore, 
\begin{equation}\label{eq:tau-iota}
\tau_{X,Y}(i)\; \iota_{i-1}^{X\otimes Y} = \iota^{Y\otimes X}_{i-1} \; \tau_{X,Y}(i-1) .
\end{equation}
 Hence, $c_{X,Y}$ is a filtered map. Using part (b), we can conclude that ${\sf Fil}(\C)$ is braided with the braiding $\tau_{X,Y}$.

\smallskip

(d)  Since $\C$ is monoidal, we get that $\C^{\mathbb{N}_0}$ is monoidal, with monoidal structure $$((X_i)_{i\in \mathbb{N}_0} \otimes (Y_j)_{j \in \mathbb{N}_0})_k :=  \textstyle \coprod_{i+j=k}(X_i \otimes Y_j).$$ The monoidal unit is $ (e_i)_{i\in \mathbb{N}_0}$ with $e_i=\delta_{i,0} \mathbbm{1}$. 
Using this we can endow $\textsf{Gr}(\C)$ with a monoidal structure  where $\otimes$ inherited from $\C^{\mathbb{N}_0}$, and where unit object $\mathbbm{1}$ is $ (e_i)_{i\in \mathbb{N}_0}$ where $e_i=\delta_{i,0} \mathbbm{1}$.

When $\C$ is braided, we can collect the maps $c_{X_i,Y_j}: X_i\otimes Y_j \rightarrow Y_j \otimes X_i $ to get the maps 
\[ 
c_{X,Y}(k):= \oplus_{i+j=k}c_{X_i,Y_j}: (X\otimes Y)_k \rightarrow (Y\otimes X)_k.
\]
\noindent
Here, $c_{X,Y}$ is a braiding on $\textsf{Gr}(\C)$, making it a braided category.
%\blue{[see note 20200929-Fil(C)-Gr(C)-are-monoidal for proof in $\C$ strict case]}
\end{proof}

We will also need the following result.

\begin{corollary} \label{cor:gr-fil-epic}
The monoidal categories ${\sf Fil}(\C)$ and ${\sf Gr}(\C)$ satisfy the conditions of Hypotheses~\ref{hyp:abelian} and~\ref{hyp:strict}.
\end{corollary}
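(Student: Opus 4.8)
The statement packages three assertions about each of ${\sf Gr}(\C)$ and ${\sf Fil}(\C)$: that it is abelian (Hypothesis~\ref{hyp:abelian}), that the monoidal structure built in Proposition~\ref{prop:moncats} is strict (Hypothesis~\ref{hyp:strict}(1)), and that its tensor product is biexact (Hypothesis~\ref{hyp:strict}(2)). I would treat these in turn. For abelianness, the plan is to identify ${\sf Gr}(\C)$ with the functor category ${\sf Fun}(\Ncat,\C)\cong\C^{\mathbb{N}_0}$ and ${\sf Fil}(\C)$ with ${\sf Fun}(\ncat,\C)$, viewing a graded (resp.\ filtered) object as its underlying diagram indexed by the small posetal category $\Ncat$ (resp.\ $\ncat$) and a morphism as the corresponding natural transformation. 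A functor category valued in an abelian category is again abelian, with kernels, cokernels, images, and all (co)limits computed pointwise; since $\C$ is abelian by Hypothesis~\ref{hyp:abelian}, so are both categories, and additivity of the structural functors is inherited pointwise from $\C$.

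For strictness, I would note that the associativity and unit constraints produced in Proposition~\ref{prop:moncats} are assembled from the monoidal product of $\C$ together with the universal properties of the defining (co)limits; because $\C$ is strict by Hypothesis~\ref{hyp:strict}(1), these constraints reduce to the canonical comparison isomorphisms relating reindexed iterated (co)limits. Invoking Mac Lane's coherence theorem, these canonical isomorphisms may be normalized to identities, so both categories are strict. Biexactness is then immediate for ${\sf Gr}(\C)$: at degree $k$ the product is the \emph{finite} direct sum $(X\otimes Y)_k=\coprod_{i+j=k}X_i\otimes Y_j$, exactness in $\C^{\mathbb{N}_0}$ is checked degreewise, and since $\otimes$ is biexact on $\C$ and finite direct sums are exact, both $X\otimes(-)$ and $(-)\otimes Y$ are exact.

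The one genuinely delicate point — and the step I expect to be the main obstacle — is biexactness of the tensor product on ${\sf Fil}(\C)$, whose level-$k$ part is the colimit $F_{X\otimes Y}(k)=\textnormal{colim}_{i+j\le k}F_X(i)\otimes F_Y(j)$ of~\eqref{eq:tens-filt}. Right exactness of $(-)\otimes(Y,F_Y)$ is free: exactness is checked levelwise, each functor $F_X\mapsto F_X(i)\otimes F_Y(j)$ is exact (evaluation at $i$ is exact in a functor category and $(-)\otimes F_Y(j)$ is exact on $\C$), and a colimit of right-exact functors is right exact. The trouble is left exactness, since the indexing poset $\{(i,j):i+j\le k\}$ has several maximal elements and is \emph{not} filtered, so the colimit involves genuine pushouts, and finite colimits need not preserve monomorphisms. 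To control this I would pass to the forgetful functor $U\colon{\sf Fil}(\C)\to\C$, $F\mapsto\textnormal{colim}_i F(i)$, which is a \emph{filtered} colimit and hence exact under the cocompleteness and exactness assumptions on $\C$, and which intertwines the two tensor products since $\textnormal{colim}_i F_{X\otimes Y}(i)\cong X\otimes Y$ by Proposition~\ref{prop:moncats}. The remaining work is to show that each level-$k$ colimit itself preserves monomorphisms; I expect this to require the transition maps $\iota^X_i$ to be monic, so that the subobjects $F_X(i)\otimes F_Y(j)$ of $X\otimes Y$ have well-behaved intersections and the level-$k$ colimit computes their union inside $X\otimes Y$, thereby injecting into $X\otimes Y$ and preserving monos. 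Verifying this compatibility of the colimit with intersections of subobjects is where the real effort lies.
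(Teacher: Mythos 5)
Your treatment of abelianness (pointwise, via the identifications ${\sf Fil}(\C)\cong{\sf Fun}(\ncat,\C)$ and ${\sf Gr}(\C)\cong\C^{\mathbb{N}_0}$), of strictness, and of biexactness for ${\sf Gr}(\C)$ (degreewise finite direct sums) coincides with what the paper does, and your observation that right exactness of $(-)\otimes(W,F_W)$ on ${\sf Fil}(\C)$ is automatic because colimits commute with cokernels is also fine. The problem is that the one step you yourself flag as ``where the real effort lies'' --- left exactness of the level-$k$ Day convolution $F_{X\otimes Y}(k)=\textnormal{colim}_{i+j\le k}\,F_X(i)\otimes F_Y(j)$ from \eqref{eq:tens-filt} --- is never actually carried out, so the proposal does not prove the statement. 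Your diagnosis of the danger is exactly right: the poset $\{(i,j): i+j\le k\}$ is not filtered (already for $k=1$ it is a span, so the level-$1$ colimit is a pushout), and a levelwise monomorphism of diagrams need not induce a monomorphism on such colimits. Deferring this step is not a presentational shortcut; it is the entire mathematical content of the ${\sf Fil}(\C)$ half of the corollary.

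Moreover, the repair you sketch --- realizing each $F_W(i)\otimes F_X(j)$ as a subobject of $W\otimes X$ and identifying the level-$k$ colimit with the union of these subobjects --- requires the transition maps $\iota^X_i$ to be monic, but Definition~\ref{def:fil} does not impose monicity (that is the separate condition in part (e) of that definition), and the corollary is asserted for all of ${\sf Fil}(\C)$. As written, your argument would at best establish exactness on the subcategory of monically filtered objects, i.e., a weaker statement than the one claimed. For comparison, the paper's own proof does not isolate this issue: it passes from the componentwise short exact sequences $0\to F_W(i)\otimes F_X(j)\to F_W(i)\otimes F_Y(j)\to F_W(i)\otimes F_Z(j)\to 0$ and the commuting transition squares directly to exactness of $0\to F_{W\otimes X}(k)\to F_{W\otimes Y}(k)\to F_{W\otimes Z}(k)\to 0$, treating the passage to the colimit over $\{i+j\le k\}$ as immediate. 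Your analysis correctly identifies that this deduction is precisely the point needing justification; to turn your proposal into a proof you must either supply that justification in the stated generality or explicitly reduce to monic filtrations and adjust the scope of the claim accordingly.
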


\begin{proof}
It is clear by the proof of the proposition above that ${\sf Fil}(\C)$ and ${\sf Gr}(\C)$ are strict as $\C$ is strict. Moreover, ${\sf Gr}(\C)$ satisfies the rest of Hypotheses~\ref{hyp:abelian} and~\ref{hyp:strict} component-wise due to $\C$ satisfying these  hypotheses. Moreover, $\ncat$ is small and Hypothesis~\ref{hyp:abelian} holds for $\C$, so we get that Hypothesis~\ref{hyp:abelian} also holds for ${\sf Fil}(\C) =  \textsf{Fun}(\ncat,\C)$. 

Now it suffices to show that the tensor product of ${\sf Fil}(\C)$ is biexact.
We will show that for any object $(W,F_W)$ in \textsf{Fil}$(\C)$, we obtain that $((W,F_W)\otimes -):$\textsf{Fil}$(\C) \rightarrow$ \textsf{Fil}$(\C)$ is an exact functor. A similar argument would      show that $-\otimes (W,F_W)$ is also exact. Consider any exact sequence 
\begin{equation}\label{eq:ses1}
   0 \rightarrow (X,F_X) \xrightarrow{f} (Y,F_Y) \xrightarrow{g} (Z,F_Z)\rightarrow 0
\end{equation}
in \textsf{Fil}$(\C)$. Exactness of \eqref{eq:ses1} implies that 
$ 0\rightarrow F_X(j) \rightarrow F_Y(j) \rightarrow F_Z(j)\rightarrow 0$
is exact for all $i\in \mathbb{N}$. 
Since the tensor product of $\C$ is biexact, we get that
\begin{equation}\label{eq:ses2}
0\rightarrow  F_W(i)\otimes F_X(j) \xrightarrow{\textnormal{id}\otimes {F_f(j)}} F_W(i) \otimes F_Y(j) \xrightarrow{\textnormal{id}\otimes {F_g(j)}} F_W(i)\otimes F_Z(j)\rightarrow 0    
\end{equation}
is exact for all $i,j\in \mathbb{N}$. Furthermore, the following diagrams commute,
\[
\xymatrix{
F_W(i)\otimes F_X(j) \ar[r]^(.47){\id \otimes \iota^X_j } \ar[d]_{\id \otimes F_f(j)} 
& F_W(i)\otimes F_X(j+1) \ar[d]^{\id \otimes F_f(i+1)} 
& F_W(i)\otimes F_X(j) \ar[r]^(.47){\iota^W_j \otimes \id} \ar[d]_{\id \otimes F_f(j)} 
& F_W(i+1)\otimes F_X(j) \ar[d]^{\id\otimes F_f(i)} 
\\
F_W(i)\otimes F_Y(j) \ar[r]^(.47){\id \otimes \iota^Y_{j}} 
& F_W(i)\otimes F_Y(j+1) 
& F_W(i)\otimes F_Y(j) \ar[r]^(.47){\iota^W_{j} \otimes \id} 
& F_W(i+1)\otimes F_Y(j),
}
\]
where the first diagram commutes because $f$ is a filtered map and the second commutes trivially.
We also have similar diagrams with $X,Y$ replacing $Y,Z$, respectively for all $i,j\in \mathbb{N}_0$. Recall from Definition \ref{eq:tens-filt} that $F_{W\otimes X}(k) = \text{colim}_{i+j\leq k} F_W(i) \otimes F_X(j)$. The above commutative diagrams together with the exactness of \eqref{eq:ses2} imply the exactness of:
\begin{equation*}
    0 \rightarrow F_{W\otimes X}(k) \rightarrow F_{W\otimes Y}(k) \rightarrow F_{W\otimes Z}(k)\rightarrow 0
\end{equation*}
for all $k\in \mathbb{N}_0$. Thus, the sequence
$$ 0 \rightarrow F_{W\otimes X} \xrightarrow{\id\otimes f} F_{W\otimes Y} \xrightarrow{\id \otimes g} F_{W\otimes Z}\rightarrow 0 $$
is exact. 
\end{proof}

Now using Proposition~\ref{prop:moncats}, we set the following terminology.

\begin{definition} \label{def:fil-gr-C}
Take an algebraic structure $X$ in $\C$, e.g., $X$ is either an algebra, a coalgebra, a Frobenius algebra, a left/right module, a left/right weak ideal, or a left/right ideal in $\C$. If $X$ belongs to the monoidal category $\textsf{Fil}(\C)$, then we say that $X$  is a \textit{filtered  structure} in $\C$. Likewise, if $X$  is in $\textsf{Gr}(\C)$, then $X$ is called a  {\it graded structure} in $\C$.
\end{definition}

Next, we study when a left ideal of a filtered algebra in $\C$ admits the structure of a filtered left ideal of $A$. Consider the preliminary results below.

\begin{lemma}\label{lem:filMor}
Consider three filtered objects $(X,F_X),(Y,F_Y),(Z,F_Z)$ in $\C$. Then the following statements are equivalent.
\begin{enumerate}[(a),font=\upshape]
    \item There exists a filtered map $f: X \otimes Y \rightarrow Z \in \textnormal{\textsf{Fil}}(\C)$.
    
    \smallskip
   
    \item There exist morphisms $f_{i,j}: F_X(i) \otimes F_Y(j) \rightarrow F_Z(i+j) \in \C$  for $i,j \in \mathbb{N}_0$ such that the following diagrams commute:
    \begin{small}
    \begin{center}
    \begin{tikzcd}
    F_X(i)\otimes F_Y(j) \arrow[d, "{f_{i,j}}"'] \arrow[r, " \iota^X_i 
    \otimes \textnormal{id}"] &  F_X(i+1)\otimes F_Y(j) \arrow[d, "{f_{i+1,j}}"] & F_X(i)\otimes F_Y(j) \arrow[d, "{f_{i,j}}"'] \arrow[r, "\textnormal{id}\otimes \iota^Y_j"] &  F_X(i)\otimes F_Y(j+1) \arrow[d, "{f_{i,j+1}}"] \\
    F_Z(i+j) \arrow[r, "\iota^Z_{i+j}"']  & F_Z(i+j+1)   & F_Z(i+j) \arrow[r, "\iota^Z_{i+j}"']   & F_Z(i+j+1).                   
    \end{tikzcd}    
    \qed  
    \end{center}
    \end{small}
   % In this case, we say that the collection of maps $(f_{i,j})$ is filtration preserving. \qed
\end{enumerate}
\end{lemma}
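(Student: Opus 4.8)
The plan is to unwind both statements through the colimit description of the tensor-product filtration, $F_{X\otimes Y}(k) = \text{colim}_{i+j\leq k} F_X(i)\otimes F_Y(j)$ from \eqref{eq:tens-filt}, and to translate between a morphism of filtration functors $F_{X\otimes Y}\to F_Z$ and a cocone-compatible family of maps out of the individual pieces $F_X(i)\otimes F_Y(j)$. Throughout I would write $s^k_{i,j}:F_X(i)\otimes F_Y(j)\to F_{X\otimes Y}(k)$ for the structure maps of this colimit (defined whenever $i+j\leq k$), and $\iota^Z_{p\to q}$ for the composite $\iota^Z_{q-1}\cdots\iota^Z_p:F_Z(p)\to F_Z(q)$ of filtration inclusions on $Z$ for $p\leq q$. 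Recall that by Definition~\ref{def:fil} a filtered map $f$ \emph{is} a natural transformation $F_f=\{F_f(k):F_{X\otimes Y}(k)\to F_Z(k)\}$ with $\text{colim}_k F_f(k)=f$, naturality meaning $\iota^Z_k\, F_f(k)=F_f(k+1)\,\iota^{X\otimes Y}_k$.

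For the implication (a) $\Rightarrow$ (b), I would simply define $f_{i,j}:=F_f(i+j)\,s^{i+j}_{i,j}:F_X(i)\otimes F_Y(j)\to F_Z(i+j)$. The two required squares then follow from a short chase that combines the naturality of $F_f$ with the two standard compatibilities of colimit structure maps, namely $\iota^{X\otimes Y}_{i+j}\,s^{i+j}_{i,j}=s^{i+j+1}_{i,j}$ and $s^{i+j+1}_{i+1,j}\,(\iota^X_i\otimes\id)=s^{i+j+1}_{i,j}$ (together with its $Y$-analogue, which handles the second square).

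For the converse (b) $\Rightarrow$ (a), given the family $f_{i,j}$ I would, for each fixed $k$, assemble the maps $g^k_{i,j}:=\iota^Z_{(i+j)\to k}\,f_{i,j}:F_X(i)\otimes F_Y(j)\to F_Z(k)$ indexed by $\{(i,j):i+j\leq k\}$. Post-composing the two commuting squares of (b) with the appropriate $\iota^Z$ shows exactly that the $g^k_{i,j}$ are compatible with the two elementary transition maps $\iota^X_i\otimes\id$ and $\id\otimes\iota^Y_j$ of the colimit diagram; hence they form a cocone, and the universal property of the colimit produces a unique $F_f(k):F_{X\otimes Y}(k)\to F_Z(k)$. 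I would then check naturality, $\iota^Z_k\,F_f(k)=F_f(k+1)\,\iota^{X\otimes Y}_k$, by precomposing both sides with the jointly epic structure maps $s^k_{i,j}$ and comparing, and finally set $f:=\text{colim}_k F_f(k)$, which is filtered by construction. The defining formulas make it clear that the two constructions are mutually inverse, giving the sharper bijective form should it be wanted.

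The main obstacle is the well-definedness step in (b) $\Rightarrow$ (a): one must be certain that compatibility with only the two elementary transition maps appearing in (b) suffices to yield a genuine cocone over the whole indexing poset $\{(i,j):i+j\leq k\}$, and thus a map out of the colimit. This reduces to the observation that, as a category, that poset is generated by the covers $(i,j)\to(i+1,j)$ and $(i,j)\to(i,j+1)$, so any two directed paths between the same pair of indices are automatically equalized once the two elementary squares commute. Once this bookkeeping is secured, both implications are routine diagram chases.
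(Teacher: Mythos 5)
Your proof is correct. The paper gives no argument for this lemma (it is stated with a \qed as routine), and your unwinding of both conditions through the colimit description $F_{X\otimes Y}(k)=\mathrm{colim}_{i+j\leq k}F_X(i)\otimes F_Y(j)$ — including the observation that the indexing poset is thin and generated by the covers $(i,j)\to(i+1,j)$ and $(i,j)\to(i,j+1)$, so that compatibility with the two elementary squares already yields a cocone — is exactly the argument the authors evidently have in mind and is the one they implicitly use later (e.g.\ in Proposition~\ref{prop:idealFil}).
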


\begin{lemma}\label{lem:induced}
Take $(Y,F_Y) \in \textnormal{\textsf{Fil}}(\C)$ with finite filtration, and let $f:X\rightarrow Y $ be a morphism in $\C$. Then there exists a filtration $F_X$ on $X$ such that $f \in \textnormal{\textsf{Fil}}(\C)$.
\end{lemma}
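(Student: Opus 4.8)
The plan is to build the filtration $F_X$ on $X$ by ``shifting'' all of $X$ into the stable range of the finite filtration $F_Y$, and then to transport $f$ into each filtration degree using the fact that, past a certain point, the canonical colimit morphisms for $Y$ are isomorphisms. First I would record the consequence of finiteness that drives the argument: by Definition~\ref{def:fil}(4), since $F_Y$ is finite there is some $n \in \mathbb{N}_0$ with $F_Y(i) \cong F_Y(n)$ for all $i \geq n$ and $Y \cong F_Y(n)$; concretely this means that the canonical morphisms $\psi^Y_i \colon F_Y(i) \to Y$ of Notation~\ref{not:incl}(2) are isomorphisms for every $i \geq n$, and I will use $(\psi^Y_i)^{-1}$ below.

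Next I would define $F_X \in \textsf{Fun}(\ncat,\C)$ (using that $\C$ is abelian, so a zero object $0$ exists) by
\[
F_X(i) = \begin{cases} 0, & i < n, \\ X, & i \geq n, \end{cases}
\]
with transition maps $\iota^X_i = \id_X$ for $i \geq n$, the morphism $\iota^X_{n-1}\colon 0 \to X$ taken to be zero, and $\iota^X_i \colon 0 \to 0$ for $i < n-1$. The sequential colimit of this diagram is $X$, so $(X,F_X) \in \textsf{Fil}(\C)$ with $\psi^X_i = \id_X$ for $i \geq n$ and $\psi^X_i = 0$ otherwise. I then define the candidate natural transformation $F_f = \{F_f(i)\}_{i}$ by $F_f(i) = 0$ for $i < n$ and $F_f(i) = (\psi^Y_i)^{-1} f \colon X \to F_Y(i)$ for $i \geq n$.

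Finally I would verify the two conditions making $f$ a filtered morphism. For naturality I must check $\iota^Y_i \, F_f(i) = F_f(i+1)\, \iota^X_i$ for all $i$; the cases $i < n$ hold trivially since the source is $0$, and the case $i \geq n$ reduces to $\iota^Y_i\,(\psi^Y_i)^{-1} = (\psi^Y_{i+1})^{-1}$, which follows by inverting the defining colimit relation $\psi^Y_i = \psi^Y_{i+1}\,\iota^Y_i$ from Notation~\ref{not:incl}(2). For the colimit condition, the induced morphism $\bar f \colon X \to Y$ is characterized by $\bar f\, \psi^X_i = \psi^Y_i\, F_f(i)$, and evaluating at any $i \geq n$ gives $\bar f = \psi^Y_i (\psi^Y_i)^{-1} f = f$, so $\text{colim}_i F_f(i) = f$ as required. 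The only real subtlety, and the step I would treat most carefully, is the passage from the objectwise isomorphisms $F_Y(i) \cong F_Y(n)$ granted by the definition of a finite filtration to the invertibility of the canonical maps $\psi^Y_i$ for $i \geq n$; once that is in hand, the remainder is a formal diagram chase built on the shifted filtration.
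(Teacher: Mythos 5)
Your proof is correct for the statement as literally written, but it takes a genuinely different route from the paper. You construct the degenerate ``shifted'' filtration $F_X(i)=0$ for $i<n$ and $F_X(i)=X$ for $i\geq n$, transporting $f$ via the inverses of the stabilized colimit maps $\psi^Y_i$; the paper instead sets $F_X(n)=X$ and defines $F_X(i)$ for $i<n$ by iterated pullbacks of $F_X(i+1)\to F_Y(i+1)\leftarrow F_Y(i)$, i.e.\ it builds the canonical preimage filtration $f^{-1}(F_Y(i))$. Both constructions satisfy the naturality and colimit conditions and hence prove the existence claim, and both rely on the same (tacit in the paper, explicitly flagged by you) reading of ``finite filtration'' as one whose transition maps eventually stabilize, so that $\psi^Y_i$ is invertible for $i\geq n$. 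What the paper's heavier construction buys is everything downstream: Proposition~\ref{prop:idealFil} and the proof of Theorem~\ref{thm:main} repeatedly invoke the fact that the induced filtration is built from pullbacks --- e.g.\ that $F_\phi(i)$ is monic because pullbacks preserve monomorphisms, that the action maps $\lambda_{i,j}$ arise from the universal property of those pullbacks, and that the map $\theta:F_I(n)\to F_I(n-1)$ in Theorem~\ref{thm:main} exists by that same universal property. Your filtration, while a valid witness for the lemma's existential conclusion, would not support any of those later arguments, so if you intend your lemma to play the role it plays in this paper you should adopt the pullback construction (or state the lemma so that it records the extra structure of the induced filtration).
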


\begin{proof}
Since $Y$ has a finite filtration, there exists $n\in \mathbb{N}_0$ such that $F_Y(n+k)=F_Y(n)$ for all $k\in \mathbb{N}_0$. Hence, colim${}_i F_Y(i)=F_Y(n)=Y$.  Define $F_X(n) = X$, and define $F_X(n-1)$ to be pullback of the morphisms $f: F_X(n) \to F_Y(n)$ and $\iota_{n-1}^Y:F_Y(n-1) \to F_Y(n)$ via morphisms $p_{n-1}:F_X(n-1) \to F_X(n)$ and $f_{n-1}: F_X(n-1) \to F_Y(n-1)$. Likewise, inductively, define $F_X(i)$ to be pullback of the morphisms $f_{i+1}: F_X(i+1) \to F_Y(i+1)$ and $\iota_{i}^Y:F_Y(i) \to F_Y(i+1)$ via morphisms $p_{i}:F_X(i) \to F_X(i+1)$ and $f_{i}: F_X(i) \to F_Y(i)$. Thus, we get the composition of commutative diagrams below:
\begin{small}
\[ 
\xymatrix@R=2pc@C=1.8pc{
F_X(0)\ar[d]_{f_0} \ar[r]^{p_0} \pullbackcorner & F_X(1)  \ar[d]_{f_1} \ar[r]^{\quad p_1} \pullbackcorner & \cdots \ar[r]^{p_{i-1}\;\;}  & F_X(i)\ar[d]_{f_i} \ar[r]^{p_i \quad} \pullbackcorner & F_X(i+1)\ar[d]_{f_{i+1}}\ar[r]^{\quad p_{i+1}} & \cdots \ar[r]^{p_{n-2}\quad \quad}  & F_X(n-1)\ar[d]_{f_{n-1}} \ar[r]^{\quad \quad p_{n-1}} \pullbackcorner & X\ar[d]^{f}\\
F_Y(0) \ar[r]_{\iota^Y_0} & F_Y(1) \ar[r]_{\iota^Y_1}  & \cdots\ar[r]_{\iota^Y_{i-1} \quad} & F_Y(i)\ar[r]_{\iota^Y_{i} \quad} & F_Y(i+1)\ar[r]_{\quad \iota^Y_{i+1}}  & \cdots \ar[r]_{\iota^Y_{n-2} \quad \quad} & F_Y(n-1) \ar[r]_{\quad \quad \iota^Y_{n-1}}& Y
}
\]
\end{small}

\noindent Since $\text{colim}_i F_X(i) = F_X(n)=X$, we get that $X$ is a filtered object with filtration $F_X$, and moreover, $f$ is a filtered map via $f_i$. Namely, in the diagram above, we have 
$$f_i = F_f(i) \quad \quad \text{and} \quad \quad p_i = \iota^X_{i}.$$

\vspace{-.25in}

\end{proof}

\begin{proposition}\label{prop:idealFil}
Let $(I,\lambda,\phi)$ be left ideal of a filtered algebra $(A,F_A,m,u)$ with finite filtration. Then, with the induced filtration on $I$ via $\phi$ \textnormal{[Lemma \ref{lem:induced}]}, we obtain that $(I,\lambda,\phi)$ is filtered left ideal of $A$. 
\end{proposition}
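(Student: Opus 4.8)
The plan is to verify that the filtration $F_I$ produced by Lemma~\ref{lem:induced} promotes $(I,\lambda,\phi)$ to a left ideal \emph{inside} the monoidal category ${\sf Fil}(\C)$. Concretely, this means checking that: (i) $\lambda\colon A\otimes I\to I$ is a filtered morphism making $I$ a left $A$-module in ${\sf Fil}(\C)$; (ii) $\phi$ is a morphism of left $A$-modules in ${\sf Fil}(\C)$; and (iii) $\phi$ is monic in ${\sf Fil}(\C)$. The filteredness of $\phi$ is already recorded in Lemma~\ref{lem:induced}, so the core of the argument is the filteredness and module-compatibility of $\lambda$.

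First I would repackage the inductive pullback construction of $F_I$ into a single global pullback. Since the filtration on $A$ is finite, say stabilizing at $n$, pasting the consecutive pullback squares
\[
\xymatrix{
F_I(k) \ar[r]^{\iota^I_k} \ar[d]_{\phi_k} & F_I(k+1) \ar[d]^{\phi_{k+1}} \\
F_A(k) \ar[r]_{\iota^A_k} & F_A(k+1)
}
\]
up to level $n$ shows that $F_I(k)\cong I\times_A F_A(k)$, namely the pullback of $\phi\colon I\to A$ along $\psi_k^A\colon F_A(k)\to A$, with projections $\psi_k^I\colon F_I(k)\to I$ and $\phi_k:=F_\phi(k)\colon F_I(k)\to F_A(k)$ satisfying $\phi\,\psi_k^I=\psi_k^A\,\phi_k$. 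In particular, since $\phi$ is monic and pullbacks preserve monics, each $\phi_k$ is monic.

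Next I would build the filtered components of $\lambda$ using Lemma~\ref{lem:filMor}. For each $i,j$ I want $\lambda_{i,j}\colon F_A(i)\otimes F_I(j)\to F_I(i+j)$, and I would obtain it from the universal property of the pullback $F_I(i+j)=I\times_A F_A(i+j)$ applied to the pair
\[
u:=\lambda\,(\psi_i^A\otimes\psi_j^I)\colon F_A(i)\otimes F_I(j)\to I, \qquad v:=m_{i,j}\,(\id\otimes\phi_j)\colon F_A(i)\otimes F_I(j)\to F_A(i+j),
\]
where the $m_{i,j}$ are the filtered components of the multiplication of $A$ (obtained from Lemma~\ref{lem:filMor} applied to the filtered map $m$). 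The compatibility $\phi\,u=\psi_{i+j}^A\,v$ needed to invoke the universal property follows from the left-module identity $\phi\lambda=m(\id\otimes\phi)$ together with the pullback relation $\phi\,\psi_j^I=\psi_j^A\,\phi_j$ and the filteredness of $m$. This produces a unique $\lambda_{i,j}$ with $\psi_{i+j}^I\lambda_{i,j}=u$ and $\phi_{i+j}\lambda_{i,j}=v$.

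Then I would check the two commuting squares of Lemma~\ref{lem:filMor}, i.e.\ $\iota_{i+j}^I\lambda_{i,j}=\lambda_{i+1,j}(\iota_i^A\otimes\id)$ and $\iota_{i+j}^I\lambda_{i,j}=\lambda_{i,j+1}(\id\otimes\iota_j^I)$. Each is an equality of maps into the pullback $F_I(i+j+1)$, so by uniqueness it suffices to compare them after postcomposing with $\psi_{i+j+1}^I$ and with $\phi_{i+j+1}$: the first comparison reduces to $\psi_{i+1}^A\iota_i^A=\psi_i^A$, and the second to the analogous $\iota^A_{i+j}m_{i,j}=m_{i+1,j}(\iota_i^A\otimes\id)$ coming from $A$ being a filtered algebra. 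This makes $\lambda$ a filtered morphism. Finally, the left-module axioms for $\lambda$, the module-morphism identity for $\phi$, and the unit condition are equalities of filtered maps landing in $I$; checked componentwise and using that each $\phi_k$ is monic, they can be verified after postcomposing with $\phi_k$, whereupon $\phi_{i+j}\lambda_{i,j}=m_{i,j}(\id\otimes\phi_j)$ reduces them to the associativity and unit identities of the filtered algebra $A$. Monicity of $\phi$ in ${\sf Fil}(\C)={\sf Fun}(\ncat,\C)$ is componentwise, hence follows from each $\phi_k$ being monic. The main obstacle is the bookkeeping in constructing the $\lambda_{i,j}$ and checking the Lemma~\ref{lem:filMor} squares; the conceptual key that tames it is the identification $F_I(k)\cong I\times_A F_A(k)$, after which every verification becomes a short diagram chase through the pullback universal property.
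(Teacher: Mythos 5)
Your proposal is correct and follows essentially the same route as the paper: identify $F_I(k)$ as the pasted pullback $I\times_A F_A(k)$, obtain $\lambda_{i,j}$ from the universal property applied to $\lambda(\psi_i^A\otimes\psi_j^I)$ and $m_{i,j}(\id\otimes F_\phi(j))$ (with compatibility coming from the left-ideal axiom and filteredness of $m$), and verify the remaining filtration and module identities by postcomposing with the monic legs $F_\phi(k)$. The only difference is presentational: you make the global-pullback identification and the verification of the two Lemma~\ref{lem:filMor} squares explicit, which the paper handles via its pullback-corner diagram and leaves partly to the reader.
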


\begin{proof}
Since $A$ has a finite filtration, there exists $n\in \mathbb{N}$ such that $F_A(n+k)=F_A(n)$ for all $k\in \mathbb{N}$.
Let $F_I$ denote the induced filtration on $I$ from Lemma~\ref{lem:induced}, and let $$(\phi,F_{\phi}): (I,F_I)\rightarrow (A,F_A)$$ be the filtered map between $I$ and $A$.
Furthermore, since $\phi$ is a mono, $F_{\phi}(i)$ is a mono for all $i$ as pullbacks preserve monomorphisms. 

Since $(A,F_A)$ is a filtered algebra with multiplication $m: A \otimes A \to A$ (that is associative),  we have   maps $m_{i,j}: F_A(i)\otimes F_A(j)\rightarrow F_A(i+j)$ in $\C$ satisfying
\begin{equation}\label{eq:filAlg}
m_{i+j,k} (m_{i,j}  \otimes \id_{F_A(k)}) = m_{i,j+k}(\id_{F_A(i)} \otimes m_{j,k})
\end{equation}
 by Lemma~\ref{lem:filMor}. To show that $(I,F_I)$ is a filtered left ideal of $A$, we need to first give a filtration for the maps $\lambda: A\otimes I \rightarrow I$ and  $\phi: I \rightarrow A$. We have already done so for $\phi$  in Lemma \ref{lem:induced}. To do the same for $\lambda$, by Lemma \ref{lem:filMor}, it suffices to construct maps $$\lambda_{i,j}:F_A(i)\otimes F_I(j) \rightarrow F_I(i+j),$$ and check that:
    \begin{align}\label{eq:filLambda}
    \begin{split}
        \lambda_{i+1,j}\; (\iota^A_i\otimes \text{id}_{F_I(j)}) =  \iota^I_{i+j}\;\lambda_{i,j} \quad \text{ and } \quad
        \lambda_{i,j+1}\; (\text{id}_{F_A(i)} \otimes \iota^I_j) =  \iota^I_{i+j}\; \lambda_{i,j}.    
    \end{split}
    \end{align}
We also need to check that $\lambda_{i,j}$ makes $I$ a filtered left $A$-module, that is,
    \begin{equation}\label{eq:filModule1}
        \lambda_{i+j,k}\; (m_{i,j} \otimes \text{id}_{F_{I}(k)}) = \lambda_{i,j+k}\; (\text{id}_{F_A(i)} \otimes  \lambda_{j,k}), 
    \end{equation}
and that $\phi$ is a map of filtered left $A$-modules, that is,
    \begin{equation}\label{eq:filModule2}
        F_{\phi}(i+j) \; \lambda_{i,j} = m_{i,j} \; (\text{id}_{F_A(i)} \otimes F_{\phi}(j)). 
    \end{equation}

The proofs of (\ref{eq:filLambda}) and (\ref{eq:filModule1}) use the same idea, so  we will only discuss \eqref{eq:filModule1} and \eqref{eq:filModule2}, and leave the verification of \eqref{eq:filLambda} to the reader. Consider the following equations:
\begin{align*}
\begin{split}
    \phi\; \lambda\; (\psi^A_i\otimes \psi^I_{j}) 
    & =  m\; (\text{id}_A\otimes \phi)\; (\psi^A_i \otimes \psi^I_{j})\\
    & = m\; (\psi^A_i\otimes \psi^A_j) \; (\text{id}_{F_A(i)} \otimes F_{\phi}(j) ) \\
    &= \psi^A_{i+j}\; m_{i,j} \; (\text{id}_{F_A(i)} \otimes F_{\phi}(j) ).
\end{split} 
\end{align*}
Here, the first equality holds by axioms of $I$ being a left ideal of $A$; the second equality follows from the diagram  in Lemma~\ref{lem:induced} and Notation~\ref{not:incl}; and the third equality holds because $m$ is a filtered map (see, again, Notation~\ref{not:incl}). Hence, the outer square of the following diagram commutes.
Moreover, we can use the universal property of pullbacks to get maps $\lambda_{i,j}$ such that diagrams $(\RN{1})$ and $(\RN{2})$ below commute as well.
\[
\xymatrix@R=.9pc@C=3pc{
F_A(i) \otimes F_I(j) \ar[dd]_{\text{id}_{F_A(i)} \otimes F_{\phi}(j)}  \ar[r]^{\quad\;\; \psi^A_i \otimes \psi^I_j} \ar@{.>}[rd]_{\lambda_{i,j}}   & A\otimes I \ar[rrd]^{\lambda} \ar@{}[d]|(0.5){(\RN{2})}  \\
& F_I(i+j) \pullbackcorner \ar[rr]_{\psi^I_{i+j}} \ar[dd]^{F_{\phi}(i+j)} & & I \ar[dd]^{\phi} \\
F_A(i)\otimes F_A(j) \ar[rd]_{m_{i,j}} \ar@{}[ur]|(0.5){(\RN{1})}  & & & \\
&  F_A(i+j) \ar[rr]_{\quad \psi^A_{i+j}}  & & A \\
}
\]
Commutation of $(\RN{1})$ implies (\ref{eq:filModule2}). Commutation of $(\RN{2}$) implies that $\lambda_{i,j}$ are a filtration of $\lambda$. Moreover, we can now conclude that
\begin{align*}
F_{\phi}(i+j+k) \; \lambda_{i+j,k}\; (m_{i,j}\otimes \text{id}_{F_I(k)}) 
& \stackrel{(\RN{1})}{=} m_{i+j,k} \; (\text{id}_{F_A(i+j)} \otimes F_{\phi}(k))  \; (m_{i,j}\otimes \text{id}_{F_I(k)})  \\
& = m_{i+j,k} \; (m_{i,j} \otimes \text{id}_{F_A(k)}) \; (\text{id}_{F_A(i)\otimes F_A(j)} \otimes F_{\phi}(k)) \\
& \stackrel{\textnormal{(\ref{eq:filAlg})}}{=} m_{i,j+k} \; (\text{id}_{F_A(i)} \otimes m_{j,k}) \; (\text{id}_{F_A(i)\otimes F_A(j)} \otimes F_{\phi}(k)) \\
& \stackrel{(\RN{1})}{=} m_{i,j+k} \; (\text{id}_{F_A(i)} \otimes F_{\phi}(j+k)) \; (\text{id}_{F_A(i)} \otimes \lambda_{j,k}) \\
& \stackrel{(\RN{1})}{=} F_{\phi}(i+j+k) \; \lambda_{i,j+k} \; (\text{id}_{F_A(i)} \otimes \lambda_{j,k}). 
\end{align*}
Since, $F_{\phi}(i+j+k)$ is a monomorphism, we get that (\ref{eq:filModule1}) holds, as desired.
\end{proof}

%%%%%%%%%%%%%%%%%%%%%%%%%%%%%%%%%%%%%%%%%%%%%%%%%%%%%%%%%%%%%%%%%%%%%

\section{Associated graded constructions for monoidal categories}  \label{sec:assgr}

%\begin{hypothesis}\label{hyp:biexact}
%In the following sections, $\C$ is a monoidal category with $\otimes$ biexact, that is, both the functors $-\otimes X$, $X\otimes -: \C \rightarrow \C$ are exact for all objects $X\in \C$.
%\end{hypothesis} 
In this part, we present the first main result of this paper: the associated graded construction for filtered algebras in monoidal categories [Theorem~\ref{thm:gr}]. Compare to \cite[Section~3]{ArMe} and \cite[Sections~5.2.3,~5.3.3]{GKR}. Recall Hypotheses~\ref{hyp:abelian} and~\ref{hyp:strict} for a monoidal category $(\C, \otimes, \unit)$.  In addition to the notation of Section~\ref{sec:monoidal}, consider the notation below.

\begin{notation}[$\overline{F_A(i)},  \pi^A$, $\beta^{A,B}_{i,j}$] \label{not:filt}
Take filtered objects $(A,F_A), (B,F_B) \in \textsf{Fil}(\C)$.
\begin{enumerate}
    \item Let $\overline{F_A(i)}$  denote coker($F_A(i-1) \xrightarrow{\iota^A_{i-1}} F_A(i) $).
    
    \smallskip
    
    \item Let $\pi^A_i$ denote the canonical epimorphism $F_A(i) \to \overline{F_A(i)}$, for each $i \in \mathbb{N}_0$. (By convention, $\overline{F_A(0)} = F_A(0)$, so that $\pi_0^A = \id_{F_A(0)}$.)

\smallskip

\item Define the map 
$$\beta_{i,j}^{A,B}: F_A(i) \otimes F_B(j) \rightarrow  F_{A\otimes B}(i+j),$$
to be the natural map to the colimit as in \eqref{eq:tens-filt}. 
\end{enumerate}
\end{notation}

Then by properties of colimits, we have that:
\begin{equation}\label{eq:AssoCo}
\beta^{A\otimes B,C}_{i+j,k}\; (\beta^{A,B}_{i,j}\otimes \id_{F_C(k)}) = \beta_{i,j+k}^{A,B\otimes C} \; (\id_{F_A(i)} \otimes \beta^{B,C}_{j,k}).      
\end{equation}

In the case when $\C$ is braided, we will also require the notation and identities below.

\begin{notation}[$\tau_{A,B}(k)$,  $\overline{\tau_{A,B}(k)}$]
\label{not:tau}
Let $(\C, \otimes, \unit, c)$ be a braided monoidal category, and take filtered objects $(A,F_A), (B,F_B), (C,F_C)$ of $\C$. Recall from \eqref{eq:tau}, \eqref{eq:tau-c}, \eqref{eq:tau-iota} the component 
$$\tau_{A,B}(k):F_{A\otimes B}(k) \to F_{B\otimes A}(k)$$ of the filtration on the braiding map $c_{A,B}$. 
 Observe that by \eqref{eq:tau-c} we have:
\begin{equation}\label{eq:brCoker1}
  \tau_{X,Y}(i+j)\; \beta^{X,Y}_{i,j} = \beta^{Y,X}_{j,i} \; c_{F_X(i),F_Y(j)} ,   
\end{equation}
\noindent for all $i, j \in \mathbb{N}_0$ and $X,Y \in \C$.
By universal property of cokernels, for each $k \in \mathbb{N}_0$ and $X,Y \in \C$, there exists a morphism $\overline{\tau_{X,Y}(k)}$ such that the following diagram commutes
\begin{center}
\begin{tikzcd}
F_{X\otimes Y}(k-1) \arrow[r, "\iota^{X\otimes Y}_{k-1}"] \arrow[d, "{\tau_{X,Y}(k-1)}"'] & F_{X\otimes Y}(k) \arrow[r, "\pi^{X\otimes Y}_{k}"] \arrow[d, "{\tau_{X,Y}(k)}"] & \overline{F_{X\otimes Y}(k)} \arrow[d, "{\overline{\tau_{X,Y}(k)}}", dotted] \\
F_{Y\otimes X}(k-1) \arrow[r,"\iota^{Y\otimes X}_{k-1}"]                                        & F_{Y\otimes X}(k) \arrow[r,"\pi^{Y\otimes X}_{k}"]                                                  & \overline{F_{Y\otimes X}(k)}.                  
\end{tikzcd}
\end{center}
Thus we get that
\begin{equation}\label{eq:brCoker2}
    \overline{\tau_{X,Y}(k)}\; \pi^{X\otimes Y}_k
 = \pi^{Y\otimes X}_k \; \tau_{X,Y}(k).
 \end{equation}
\end{notation}

\smallskip

Next, we present the construction of the associated graded functor.

\begin{definition}[gr] \label{def:gr-functor}
We define the {\it associated graded functor}, $$\text{gr}: \textsf{Fil}(\C) \rightarrow \textsf{Gr}(\C),$$ as follows. For an object $(A,F_A) \in  \textsf{Fil}(\C)$, let
$$\text{gr}(A, F_A) = \textstyle \coprod_{i \in \mathbb{N}_0} \overline{F_A(i)}.$$ 
%For a filtered object $(A, F_A)$, we wakdefine its associated graded object as $\text{gr}(A) = \prod_{i \in \mathbbm{N}_0} \overline{F_A(i)}$. 
Given a morphism $f:(A,F_A) \rightarrow (B,F_B)$ in $\textsf{Fil}(\C)$, define 
%of filtered objects, we get a morphism of graded objects 
$\gr(f) : \text{gr}(A,F_A) \rightarrow \text{gr}(B,F_B)$ with components coming from universal property of cokernels:
\begin{equation} \label{eq:gr-comp}
\xymatrix{
F_A(i-1) \ar[r]^(.55){\iota^A_{i-1}} \ar[d]_{F_f(i-1)} & F_A(i) \ar[r]^{\pi^A_{i}} \ar[d]^{F_f(i)}          & \overline{F_A(i)} \ar@{..>}[d]^{\gr(f)_i} \ar[r] & 0 \\
F_B(i-1) \ar[r]^(.55){\iota^B_{i-1}}                       & F_B(i) \ar[r]^{\pi^B_{i}}    & \overline{F_B(i)} \ar[r] & 0.             
}
\end{equation}
\end{definition}

Now we come to the main result of this section.

\begin{theorem} \label{thm:gr}
If $\C$ a (braided) monoidal category, then $\gr: \textnormal{\textsf{Fil}}(\C) \rightarrow \textnormal{\textsf{Gr}}(\C)$ is a (braided) monoidal functor.
\end{theorem}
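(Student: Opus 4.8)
The plan is to equip $\gr$ with the data of a (lax) monoidal functor in the sense of Definition~\ref{def:monfunc}, namely natural maps $\gr_2(A,B)\colon \gr(A)\otimes\gr(B)\to\gr(A\otimes B)$ and a unit morphism $\gr_0\colon \unit\to\gr(\unit)$, and then to verify the associativity/unitality axioms together with the braided compatibility of Definition~\ref{defn:brMonFunc}. For $\gr_0$, note that the unit of $\textsf{Fil}(\C)$ is the constant filtration $F_\unit(i)=\unit$ with identity transition maps, so $\overline{F_\unit(0)}=\unit$ and $\overline{F_\unit(i)}=\coker(\id_\unit)=0$ for $i\ge 1$; hence $\gr(\unit)$ is $\unit$ concentrated in degree $0$, which is precisely the unit of $\textsf{Gr}(\C)$, and I take $\gr_0=\id_\unit$.

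The crux is the construction of $\gr_2$. In degree $k$ the source is $\bigoplus_{i+j=k}\overline{F_A(i)}\otimes\overline{F_B(j)}$ and the target is $\overline{F_{A\otimes B}(k)}$, so it suffices to produce for each $i+j=k$ a map $\gamma_{i,j}\colon \overline{F_A(i)}\otimes\overline{F_B(j)}\to\overline{F_{A\otimes B}(k)}$ and set $\gr_2(A,B)_k=\bigoplus_{i+j=k}\gamma_{i,j}$. I would obtain $\gamma_{i,j}$ by factoring the composite $\pi^{A\otimes B}_k\,\beta^{A,B}_{i,j}\colon F_A(i)\otimes F_B(j)\to\overline{F_{A\otimes B}(k)}$ through the epimorphism $\pi^A_i\otimes\pi^B_j$. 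Here biexactness of $\otimes$ (Hypothesis~\ref{hyp:strict}) is essential: tensoring the two right-exact sequences defining $\overline{F_A(i)}$ and $\overline{F_B(j)}$ exhibits $\overline{F_A(i)}\otimes\overline{F_B(j)}$ as the cokernel of $(\iota^A_{i-1}\otimes\id,\ \id\otimes\iota^B_{j-1})$, so the factorization exists and is unique once $\pi^{A\otimes B}_k\,\beta^{A,B}_{i,j}$ is shown to annihilate both $\iota^A_{i-1}\otimes\id$ and $\id\otimes\iota^B_{j-1}$. This vanishing is precisely the cocone identity $\beta^{A,B}_{i,j}(\iota^A_{i-1}\otimes\id)=\iota^{A\otimes B}_{k-1}\,\beta^{A,B}_{i-1,j}$ (and its symmetric analogue) coming from the colimit \eqref{eq:tens-filt}, combined with $\pi^{A\otimes B}_k\,\iota^{A\otimes B}_{k-1}=0$.

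With $\gr_2$ in hand, every remaining verification proceeds by precomposing with the relevant epimorphisms $\pi^A_i\otimes\cdots$ and using that a morphism out of a cokernel is determined by its composite with the canonical epimorphism. Naturality of $\gr_2$ follows from naturality of $\beta$ and uniqueness of the factorization. The associativity axiom, after precomposition with $\pi^A_i\otimes\pi^B_j\otimes\pi^C_l$, reduces on each summand to the identity \eqref{eq:AssoCo} for the maps $\beta$, while the two unit axioms reduce similarly to the unitality of the filtered structure and the choice $\gr_0=\id_\unit$. For the braided case I must check $\gr_2(B,A)\,c^{\textsf{Gr}(\C)}_{\gr(A),\gr(B)}=\gr(c^{\textsf{Fil}(\C)}_{A,B})\,\gr_2(A,B)$, where the braiding on $\textsf{Gr}(\C)$ is the componentwise $c_{\overline{F_A(i)},\overline{F_B(j)}}$ and $\gr(c^{\textsf{Fil}(\C)}_{A,B})$ has $k$-th component $\overline{\tau_{A,B}(k)}$. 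Precomposing with $\pi^A_i\otimes\pi^B_j$ and using $\gamma_{i,j}(\pi^A_i\otimes\pi^B_j)=\pi^{A\otimes B}_k\,\beta^{A,B}_{i,j}$, the right-hand side becomes $\pi^{B\otimes A}_k\,\tau_{A,B}(k)\,\beta^{A,B}_{i,j}$ by \eqref{eq:brCoker2} and then $\pi^{B\otimes A}_k\,\beta^{B,A}_{j,i}\,c_{F_A(i),F_B(j)}$ by \eqref{eq:brCoker1}; the left-hand side collapses to the same expression after applying naturality of $c$ to move $\pi^A_i\otimes\pi^B_j$ past $c_{\overline{F_A(i)},\overline{F_B(j)}}$.

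I expect the main obstacle to be the well-definedness of $\gr_2$: specifically, invoking biexactness to recognize $\overline{F_A(i)}\otimes\overline{F_B(j)}$ as the correct cokernel so that the factorization $\gamma_{i,j}$ exists and is unique. Once this step is in place, each coherence and braiding condition is a diagram chase that collapses onto the previously established identities \eqref{eq:AssoCo}, \eqref{eq:brCoker1}, and \eqref{eq:brCoker2} via cancellation of epimorphisms.
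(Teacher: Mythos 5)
Your proposal is correct and follows essentially the same route as the paper: the same componentwise construction of $\gr_2$ via the universal property of cokernels (characterized by $\Theta_{i,j}(\pi^A_i\otimes\pi^B_j)=\pi^{A\otimes B}_{i+j}\beta^{A,B}_{i,j}$), the same $\gr_0=\id_\unit$, and the same reduction of associativity, unitality, and the braiding axiom to \eqref{eq:AssoCo}, \eqref{eq:brCoker1}, and \eqref{eq:brCoker2} by cancelling the epimorphisms $\pi^A_i\otimes\pi^B_j$. The only cosmetic difference is that you build $\Theta_{i,j}$ in one step from the joint cokernel presentation of $\overline{F_A(i)}\otimes\overline{F_B(j)}$, whereas the paper factors first through $\pi^A_i\otimes\id$ and then through $\id\otimes\pi^B_j$.
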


\begin{proof}
To show that the functor $\gr$  is  monoidal, we will:
\begin{enumerate}
\item[(i)] construct a natural transformation $\gr_2: \gr(-) \otimes \gr(-) \to \gr(- \otimes -)$ that satisfies the associativity condition in Definition~\ref{def:monfunc};
\item[(ii)] construct a morphism $\gr_0: \mathbbm{1}_{\textsf{Gr}(\C)} \rightarrow \gr(\mathbbm{1}_{\textsf{Fil}(\C)}) \in \textsf{Gr}(\C)$ that satisfies the unitality condition in Definition~\ref{def:monfunc}; and will
\item[(iii)] verify that $\gr$ is braided (Definition~\ref{defn:brMon}) when $\C$ is braided.
\end{enumerate}

\medskip

(i) Take objects $A:=(A,F_A)$ and $B:=(B, F_B)$ in ${\sf Fil}(\C)$. To define a natural transformation $\gr_2=\{ \gr_2(A,B): \gr(A)\otimes \gr(B) \rightarrow \gr(A\otimes B)  \}$, it suffices to define componentwise maps $$\Theta^{A,B}_{i,j}: \overline{F_A(i)} \otimes \overline{F_B(j)}\rightarrow \overline{ F_{A\otimes B}(i+j)}$$ and check the associativity condition for these maps for all $i, j \in \mathbb{N}_0$.
To proceed, consider the exact sequence
$F_A(i-1) \xrightarrow{\iota^A_{i-1}} F_A(i) \xrightarrow{\pi^A_{i}}  \overline{F_A(i)} \rightarrow 0$. 
Apply the right exact functor $-\otimes F_B(j)$ [Hypothesis~\ref{hyp:strict}] to this sequence to get the following exact sequence.
\begin{equation}\label{eq:cokerAB}
F_A(i-1)\otimes F_B(j) \xrightarrow{\iota^A_{i-1} \otimes \textnormal{id}}  F_A(i)\otimes F_B(j) \xrightarrow{\pi^A_{i} \otimes \textnormal{id}}  \overline{F_A(i)} \otimes F_B(j) \rightarrow 0.
\end{equation}

On the other hand, recall Notation~\ref{not:incl}, and note that properties of colimits yield
\begin{equation}\label{eq:coker}
     \iota^{A\otimes B}_{i+j-1} \; \beta^{A,B}_{i-1,j} = \beta^{A,B}_{i,j} \; (\iota^A_{i-1} \otimes \id_{ F_B(j)}).
\end{equation}

\smallskip 
\noindent
 By definition of cokernels, we know that $\pi^{A\otimes B}_{i+j} \; \iota^{A\otimes B}_{i+j-1} = 0 $.
So, using (\ref{eq:coker}), we get that 
\begin{equation}\label{eq:theta}
\pi^{A\otimes B}_{i+j} \; \beta^{A,B}_{i,j} \; (\iota^A_{i-1} \otimes \id_{ F_B(j)})  = (\pi^{A\otimes B}_{i+j} \; \iota^{A\otimes B}_{i+j-1}) \; \beta^{A,B}_{i-1,j}  =0.
\end{equation}
%\[ F_A(i-1)\otimes F_B(j) \xrightarrow{\iota^A_{i-1} \otimes \textnormal{id}} F_A(i) \otimes F_B(j) \xrightarrow{\beta^{A,B}_{i,j}} F_{A\otimes B}(i+j) \xrightarrow{\pi^{A\otimes B}_{i+j}} \overline{F_{A\otimes B}(i+j)} \rightarrow 0\]
As $-\otimes F_B(j)$ is exact, coker$(\iota^A_{i-1} \otimes \textnormal{id}) = \pi^A_{i} \otimes \textnormal{id}$, we can use \eqref{eq:cokerAB} and (\ref{eq:theta}) and the universal property of cokernels to get the map $\theta_{i,j}^{A,B}$ such that the following diagram ($\dagger$) commutes: 
\[
\xymatrix@C=3pc{
F_A(i-1)\otimes F_B(j)  \ar[r]^{\quad \iota^A_{i-1} \otimes \textnormal{id}}  & F_A(i)\otimes F_B(j) \ar@{}[dr]|(.5){(\dagger)} \ar[d]_{\beta_{i,j}^{A,B}}  \ar[r]^{\pi^A_{i} \otimes \textnormal{id}}   & \overline{F_A(i)} \otimes F_B(j) \ar[r] \ar@{.>}[d]^{\theta_{i,j}^{A,B}} & 0 \\
  & F_{A\otimes B}(i+j) \ar[r]_{\pi^{A\otimes B}_{i+j}}  & \overline{F_{A\otimes B}(i+j)} \ar[r] & 0.
}
\]
Now consider the exact sequence $F_B(j-1) \xrightarrow{\iota^B_{i-1}} F_B(j) \xrightarrow{\pi^B_{j}} \overline{F_B(j)} \rightarrow 0 $.
Apply the functor $\overline{F_A(i)} \otimes -$ to the above sequence to get : 
\begin{equation}\label{eq:cokerB}
\overline{F_A(i)} \otimes F_B(j-1) \xrightarrow{\id \otimes \iota^B_{j-1}} \overline{F_A(i)} \otimes F_B(j) \xrightarrow{\id \otimes \pi^B_{j}} \overline{F_A(i)} \otimes \overline{F_B(j)} \rightarrow 0. 
\end{equation}

\noindent
Consider the following commutative diagram:
\[ 
\xymatrix@C=2.6pc{
& \overline{F_A(i)}\otimes F_B(j-1) \ar@{}[d]|(.5){(1)} \ar[r]^{\quad \id \otimes \iota^B_{j-1}} & \overline{F_A(i)}\otimes F_B(j) \ar[rd]^{\theta_{i,j}^{A,B}} &                                \\
F_A(i)\otimes F_B(j-1) \ar[ru]^{\pi^A_i \otimes\id} \ar[rd]_{\beta^{A,B}_{i,j-1}} \ar[r]^{\id \otimes \iota^B_{j-1}} & F_A(i) \otimes F_B(j) \ar@{}[d]|(.5){(2)} \ar[rd]^{\beta^{A,B}_{i,j}} \ar[ru]_{\pi^A_{i} \otimes \id} \ar@{}[rr]|(.5){(\dagger)}     &    & \overline{F_{A\otimes B}(i+j)} \\
& F_{A\otimes B}(i+j-1) \ar[r]_{\iota^{A\otimes B}_{i+j-1}}   & F_{A\otimes B}(i+j) \ar[ru]_{\pi^{A\otimes B}_{i+j}}.    &  
}
\]

\noindent
Here, $(1)$ commutes naturally,
and $(2)$ commutes by  \eqref{eq:coker}.
Since the composition along the lower boundary is $0$, if we move along the upper boundary of this diagram, we get that $ \theta_{i,j}^{A,B} (\id \otimes \iota^B_{j-1})  (\pi^A_i \otimes\id) = 0$.
Since $\pi^A_i \otimes \id$ is epic, we get that $\theta_{i,j}^{A,B}  (\id \otimes \iota^B_{j-1}) =0$.
Since $\coker(\id \otimes \iota_{j-1}^B) = \id \otimes \pi_{j}^B$, we can use \eqref{eq:cokerB} and the universal property of cokernels to get maps $\Theta_{i,j}^{A,B}$ satisfying the commutative diagram below:
\[
\xymatrix@C=3pc{
\overline{F_A(i)} \otimes F_B(j-1) \ar[r]^{\quad \id \otimes \iota^B_{j-1}}   & \overline{F_A(i)} \otimes F_B(j) \ar[rd]_{\theta_{i,j}^{A,B}} \ar[r] ^{\id \otimes \pi^B_j} & \overline{F_A(i)} \otimes \overline{F_B(j)} \ar@{.>}[d]^{\Theta_{i,j}^{A,B}} \ar@{}[dl]|(.35){(\ddagger)} \\
    & & \overline{F_{A\otimes B}(i+j)}. 
}
\]

Finally, combining ($\dagger$) and ($\ddagger$), we get a map $\Theta_{i,j}: \overline{F_A(i)} \otimes \overline{F_B(j)} \rightarrow  \overline{F_{A\otimes B}(i+j)}$, unique up to isomorphism,
such that the following diagram  commutes:
\[
\xymatrix@C=3pc{
F_A(i) \otimes F_B(j) \ar[r]^{\pi^A_i \otimes \pi^B_j} \ar@{}[dr]|(.5){(\Upsilon)} \ar[d]_{\beta^{A,B}_{i,j}}    & \overline{F_A(i)} \otimes \overline{F_B(j)} \ar@{.>}[d]^{\Theta_{i,j}} \\
F_{A\otimes B}(i+j) \ar[r]_{\pi^{A\otimes B}_{i+j}} & \overline{F_{A\otimes B}(i+j)} . 
}
\]

Collecting all of the morphisms $\Theta_{i,j}^{A,B}$ for $i,j \in \mathbb{N}_0$, we get the required map 
$$\gr_2(A,B)= \Theta: \gr(A)\otimes \gr(B) \rightarrow \gr(A\otimes B)  .$$
Now to verify the associativity condition, consider the following commutative diagram: 
\begin{small}
\[
\xymatrix@R=1pc{
F_A(i)\otimes F_B(j)\otimes F_C(k) \ar@{}[dr]|(1){(\Upsilon)} \ar[dd]_{\pi^A_i \otimes \pi^B_j \otimes \pi^C_k} \ar[rr]^{\beta^{A,B}_{i,j}\otimes \id_{F_C(k)}} &  & F_{A\otimes B}(i+j)\otimes F_C(k) \ar@{}[dr]|(1){(\Upsilon)} \ar[dd]^{\pi^{A\otimes B}_{i+j} \otimes \pi^C_{k}} \ar[rr]^{\beta^{A\otimes B,C}_{i+j,k}} &  & F_{A\otimes B\otimes C}(i+j+k) \ar[dd]^{\pi^{A\otimes B\otimes C}_{i+j+k}} \\                                        &  &          &  &        \\
\overline{F_A(i)} \otimes \overline{F_B(j)}\otimes \overline{F_C(k)} \ar[rr]_{\Theta^{A,B}_{i,j}\otimes \id_{\overline{F_C(k)}}}    &  & \overline{F_{A\otimes B}(i+j)}\otimes \overline{F_C(k)} \ar[rr]_{\Theta^{A\otimes B,C}_{i+j,k}}    &  & \overline{F_{A\otimes B\otimes C}(i+j+k)}.  
}
\]
\end{small}

\noindent
Thus we get that
\begin{small}
\begin{equation}\label{eq:cond1}
\Theta^{A\otimes B,C}_{i+j,k} \; (\Theta^{A,B}_{i,j}\otimes \id_{\overline{F_C(k)}}) \; (\pi^A_i \otimes \pi^B_j \otimes \pi^C_k) =  \pi^{A\otimes B\otimes C}_{i+j+k} \; {\beta^{A\otimes B,C}_{i+j,k}} \; (\beta^{A,B}_{i,j}\otimes \id_{F_C(k)}).
\end{equation}
\end{small}

\noindent
Similarly, we have that
\begin{small}
\begin{equation}\label{eq:cond2}
\Theta^{A, B\otimes C}_{i,j+k} \; (\id_{\overline{F_A(i)}} \otimes \Theta^{B,C}_{j,k}) \; (\pi^A_i \otimes \pi^B_j \otimes \pi^C_k) =  \pi^{A\otimes B\otimes C}_{i+j+k} \; {\beta^{A, B \otimes C}_{i,j+k}} \; (\id_{F_A(i)} \otimes \beta^{B,C}_{j,k}).
\end{equation}
\end{small}

\noindent
Combining the results above, we get that
\begin{align*}
& \Theta^{A\otimes B,C}_{i+j,k} \; (\Theta^{A,B}_{i,j}\otimes \id_{\overline{F_C(k)}}) \; (\pi^A_i \otimes \pi^B_j \otimes \pi^C_k)\\
& \quad \stackrel{\textnormal{(\ref{eq:cond1})}}{=}
\pi^{A\otimes B\otimes C}_{i+j+k} \; \beta^{A\otimes B,C}_{i+j,k} \; (\beta^{A,B}_{i,j} \otimes \id_{F_C(k)})    \\
 & \quad \stackrel{\textnormal{(\ref{eq:AssoCo})}}{=}  \pi^{A\otimes B\otimes C}_{i+j+k} \; {\beta^{A, B \otimes C}_{i,j+k}} \; (\id_{F_A(i)} \otimes \beta^{B,C}_{j,k}) \\
 & \quad \stackrel{\textnormal{(\ref{eq:cond2})}}{=} \Theta^{A, B\otimes C}_{i,j+k} \; (\id_{\overline{F_A(i)}} \otimes \Theta^{B,C}_{j,k}) \; (\pi^A_i \otimes \pi^B_j \otimes \pi^C_k).
\end{align*}

\noindent
Since $\pi^A_i \otimes \pi^B_j \otimes \pi^C_k$ is epic,  the associativity relation holds:
$$ \Theta^{A\otimes B,C}_{i+j,k} \; (\Theta^{A,B}_{i,j}\otimes \id_{\overline{F_C(k)}}) = \Theta^{A, B\otimes C}_{i,j+k} \; (\id_{\overline{F_A(i)}} \otimes \Theta^{B,C}_{j,k}). $$

\medskip

(ii) Recall that $\mathbbm{1}_{\textsf{Fil}(\C)} = \mathbbm{1}_\C$ with filtration $F_{\mathbbm{1}}(i) = \mathbbm{1}_\C$ for all $i\in \mathbb{N}_0$ with identity maps between the components. Thus, $\text{gr}(\mathbbm{1}_{\textsf{Fil}(\C)}) = \mathbbm{1}_\C$ with $\mathbb{N}_0$-grading $\mathbbm{1}_\C \oplus 0 \oplus 0 \ldots$. Therefore,  $\text{gr}(\mathbbm{1}_{\textsf{Fil}(\C)}) =  \mathbbm{1}_{\textsf{Gr}(\C)}$, and thus we define
$$\text{gr}_0 = \id_{\mathbbm{1}_{\textsf{Gr}(\C)}}.$$
By the commutative diagram ($\Upsilon$), one can check that $\gr_2(\one_{\textsf{Fil}(\C)},A) = \id_{\gr(A)} = \gr_2(A,\one_{\textsf{Fil}(\C)})$. Thus, $\text{gr}_0$ and $\text{gr}_2$ satisfy the unitality condition for $\text{gr}$ to be monoidal.

\medskip

(iii)
Lastly, we will check that the functor $\gr$ is braided when $\C = (\C, \otimes, \unit, c)$ is braided. Recall Notation~\ref{not:tau}, and consider the computation below:
\begin{align*}
\overline{\tau_{A,B}(i+j)}\; \Theta^{A,B}_{i,j} \; (\pi^A_i\otimes \pi^B_j ) 
& \stackrel{(\Upsilon)}{=} \overline{\tau_{A,B}(i+j)}\; \pi^{A\otimes B}_{i+j} \; \beta^{A,B}_{i,j} \\
& \stackrel{\textnormal{(\ref{eq:brCoker2})}}{=} \pi^{A\otimes B}_{i+j} \; \tau_{A,B}(i+j) \; \beta^{A,B}_{i,j} \\
& \stackrel{\textnormal{(\ref{eq:brCoker1})}}{=} \pi^{A\otimes B}_{i+j} \; \beta_{j,i}^{B,A} \; c_{F_A(i),F_B(j)} \\
& \stackrel{(\Upsilon)}{=} \Theta^{B,A}_{j,i} \; (\pi^B_j \otimes \pi^A_i) \; c_{F_A(i),F_B(j)} \\
& \stackrel{(\gamma)}{=} \Theta^{B,A}_{j,i} \; c_{\hspace{.01in} \overline{F_A(i)}, \overline{F_B(j)}} \; (\pi^A_i\otimes \pi^B_j ). 
\end{align*}
\noindent
The identity $(\gamma)$ holds by the naturality of the braiding map $c$. Since the morphism $\pi^A_i\otimes \pi^B_j$ is epic, we get that 
\begin{equation*}\label{eq:braid}
    \overline{\tau_{A,B}(i+j)}\; \Theta^{A,B}_{i,j} = \Theta^{B,A}_{j,i} \; c_{\hspace{.01in}\overline{F_A(i)}, \overline{F_B(j)}}. 
\end{equation*}
This is precisely the $k$-th component of the equation below with $k = i+j$:
\[ 
\gr(c_{X,Y}) \; \gr_2(A,B) = \gr_2(B,A) \; c_{\gr(A),\gr(B)}.
\]
Thus, gr is braided, as desired.
\end{proof}

\begin{corollary} \label{cor:gr-pres}
The functor $\gr$ sends filtered (commutative) algebras  to graded (commutative) algebras, and also sends  filtered left/right modules (resp., filtered left/right weak ideals) to  graded left/right modules (resp., graded left/right weak ideals).
\end{corollary}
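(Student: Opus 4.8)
The plan is to deduce the corollary entirely from the monoidality of $\gr$ established in Theorem~\ref{thm:gr}, combined with the general preservation results for monoidal functors proved earlier. The key conceptual point is Definition~\ref{def:fil-gr-C}: a filtered (resp.\ graded) algebra, module, or weak ideal \emph{in} $\C$ is by definition nothing more than the corresponding algebraic structure object \emph{of} the monoidal category $\textsf{Fil}(\C)$ (resp.\ $\textsf{Gr}(\C)$). Once one knows that $\gr \colon \textsf{Fil}(\C) \to \textsf{Gr}(\C)$ is monoidal, each claim reduces to a statement of the form ``monoidal functors preserve structure $X$,'' all of which have already been recorded.

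Concretely, I would argue as follows. By Theorem~\ref{thm:gr}, $\gr$ is a monoidal functor, so I may instantiate Proposition~\ref{prop:mon-preserve} with $F = \gr$, taking the source category to be $\textsf{Fil}(\C)$ and the target to be $\textsf{Gr}(\C)$. Part~(a) of that proposition then shows that $\gr$ sends an algebra in $\textsf{Fil}(\C)$ to an algebra in $\textsf{Gr}(\C)$, i.e.\ a filtered algebra in $\C$ to a graded algebra in $\C$; part~(b) gives the corresponding statement for left and right modules; and part~(c) gives it for left and right weak ideals.

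For the commutative case, I would observe that when $\C$ is braided, Proposition~\ref{prop:moncats} makes both $\textsf{Fil}(\C)$ and $\textsf{Gr}(\C)$ braided monoidal categories, and Theorem~\ref{thm:gr} asserts moreover that $\gr$ is a \emph{braided} monoidal functor. A filtered commutative algebra is then a commutative algebra in the braided category $\textsf{Fil}(\C)$, so Proposition~\ref{prop:commalg} sends it to a commutative algebra in $\textsf{Gr}(\C)$, which is exactly a graded commutative algebra in $\C$.

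I expect no genuine obstacle here: the substantive work was carried out in proving Theorem~\ref{thm:gr}, and this corollary simply transports that monoidality through the omnibus preservation lemmas. The one point deserving a moment's care is the bookkeeping identification used throughout---that ``filtered/graded structure in $\C$'' in the sense of Definition~\ref{def:fil-gr-C} agrees on the nose with the abstract structure object in $\textsf{Fil}(\C)$ or $\textsf{Gr}(\C)$ to which the propositions apply---but this holds by the very wording of that definition, so the result follows at once.
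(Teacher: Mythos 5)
Your proposal is correct and follows exactly the paper's own route: the paper proves this corollary by citing Theorem~\ref{thm:gr} together with Propositions~\ref{prop:mon-preserve} and~\ref{prop:commalg}, which is precisely the combination you describe. Your additional remark about the identification in Definition~\ref{def:fil-gr-C} is the right bookkeeping point and is implicit in the paper's one-line proof.
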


\begin{proof}
This follows from Theorem~\ref{thm:gr} and Propositions~\ref{prop:mon-preserve} and~\ref{prop:commalg}.
\end{proof}

\begin{definition} \label{def:assgrstr}
If $X$ is a filtered structure (e.g., algebra, left/right module, left/right weak ideal) in {\sf Fil}($\C$), then we call gr($X$) in {\sf Gr}($\C$) the {\it associated graded structure of~$X$}, and refer to $X$ as a {\it filtered deformation} of gr($X$).
\end{definition}

Moreover, it is known that the associated graded functor is right exact; see, e.g., \cite[Remark~1.6, Lemma~3.30]{GwPa}; we include some details of the proof for the reader's convenience.s

\begin{proposition}
 \label{prop:gr-leftadj} 
The associated graded functor $\gr$ is left adjoint to the functor
$$\textnormal{triv}: {\sf Gr}(\C) \to {\sf Fil}(\C), \quad \textstyle \coprod_{i \in \mathbb{N}_0} X_i \mapsto (X_0 \overset{0}{\to} X_1 \overset{0}{\to} X_2 \overset{0}{\to}  \dots).$$
As a consequence, $\gr$ is right exact. 
\end{proposition}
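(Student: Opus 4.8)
The plan is to establish the adjunction $\gr \dashv \textnormal{triv}$ directly, and then to read off right exactness from the general fact that any left adjoint preserves colimits. Working under the identification ${\sf Fil}(\C) = \textsf{Fun}(\ncat,\C)$ already used in Corollary~\ref{cor:gr-fil-epic}, I would first unwind both Hom-sets. A morphism $(A,F_A) \to \textnormal{triv}(\coprod_i Y_i)$ is a natural transformation, i.e.\ a family $\{h_i : F_A(i) \to Y_i\}_{i \in \mathbb{N}_0}$; since every transition map of $\textnormal{triv}(\coprod_i Y_i)$ is zero, the naturality squares read $h_{i+1}\,\iota^A_i = 0$ for all $i$. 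On the other hand, a morphism $\gr(A,F_A) = \coprod_i \overline{F_A(i)} \to \coprod_i Y_i$ in ${\sf Gr}(\C)$ is just a family $\{g_i : \overline{F_A(i)} \to Y_i\}_i$.

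Next I would set up the bijection through the universal property of the cokernel $\pi^A_i : F_A(i) \to \overline{F_A(i)} = \coker(\iota^A_{i-1})$. Given $\{g_i\}$, put $h_i := g_i\,\pi^A_i$; then $h_{i+1}\,\iota^A_i = g_{i+1}\,\pi^A_{i+1}\,\iota^A_i = 0$ because $\pi^A_{i+1}\,\iota^A_i = 0$, so $\{h_i\}$ is a genuine filtered morphism. Conversely, the constraint $h_{i+1}\,\iota^A_i = 0$ is, after reindexing, exactly $h_i\,\iota^A_{i-1} = 0$, so each $h_i$ factors uniquely as $g_i\,\pi^A_i$ through the cokernel (with $g_0 = h_0$, using $\pi^A_0 = \id_{F_A(0)}$). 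These assignments are mutually inverse, yielding the desired natural isomorphism $\operatorname{Hom}_{{\sf Gr}(\C)}(\gr(A,F_A),\, \coprod_i Y_i) \cong \operatorname{Hom}_{{\sf Fil}(\C)}((A,F_A),\, \textnormal{triv}(\coprod_i Y_i))$. Equivalently, and perhaps more transparently, I can exhibit the unit and counit: one checks that $\gr \circ \textnormal{triv} = \id_{{\sf Gr}(\C)}$ on the nose, since $\coker(X_{i-1} \overset{0}{\to} X_i) = X_i$, so the counit is the identity; and the unit $\eta_{(A,F_A)} : (A,F_A) \to \textnormal{triv}(\gr(A,F_A))$ has components $\pi^A_i$, which assemble into a natural transformation precisely because $\pi^A_{i+1}\,\iota^A_i = 0$. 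The triangle identities then collapse, using that each $\pi^A_i$ is epic, to the verifications $\gr(\eta) = \id$ and $\textnormal{triv}(\epsilon) = \id$.

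Finally I would conclude. Verifying naturality of the bijection in both variables is routine: it amounts to the compatibility of the maps $\pi^A_i$ with morphisms in ${\sf Fil}(\C)$, which is exactly the content of the defining square \eqref{eq:gr-comp} for $\gr$ on morphisms. This bookkeeping, together with the separate (but trivial) treatment of the degree-$0$ piece where $\overline{F_A(0)} = F_A(0)$, is the only step that demands any care, and it is the main — albeit minor — obstacle; everything else is formal. Since $\gr$ is thereby realized as a left adjoint between abelian categories and is additive by Hypothesis~\ref{hyp:abelian}, it preserves all colimits, in particular cokernels, and is therefore right exact.
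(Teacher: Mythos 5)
Your proposal is correct and matches the paper's proof in essence: the paper also establishes the adjunction via the unit with components $\pi_i^Y$ and the identity counit (using $\gr \circ \textnormal{triv} = \id_{{\sf Gr}(\C)}$), leaving the triangle identities to the reader, and then cites the standard fact that left adjoints are right exact. Your explicit Hom-set bijection through the universal property of the cokernel, and your verification that the triangle identities collapse because each $\pi_i^A$ is epic, simply supply the details the paper omits.
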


\begin{proof}
For $X = \coprod_{i \in \mathbb{N}_0} X_i \in {\sf Gr}(\C)$, define the counit of the adjunction $\varepsilon: \gr \circ \textnormal{triv} \Rightarrow \id_{{\sf Gr}(\C)}$ by $(\ep_X)_i = \id_{X_i}$, that is, $\ep_X = \id_X$. Moreover, for $(Y,F_Y) \in {\sf Fil}(\C)$, define the unit of the adjunction $\eta: \id_{{\sf Fil}(\C)} \Rightarrow \textnormal{triv} \circ \gr$ by $(\eta_Y)(i) = \pi_i^Y :F_Y(i) \to \overline{F_Y(i)}$. We leave it to the reader to check the triangle axioms to get that gr $\dashv$ triv. The consequence is well-known.
\end{proof}

\begin{remark}
The results in this section also hold if we replace the monoid $\mathbb{N}_0$ by a partially ordered set that is also a monoid.
\end{remark} 

%%%%%%%%%%%%%%%%%%%%%%%%%%%%%%%%%%%%%%%%%%%%%%%%%%%%%%%%%%%%%%%%%%%%%
%%%%%%%%%%%%%%%%%%%%%%%%%%%%%%%%%%%%%%%%%%%%%%%%%%%%%%%%%%%%%%%%%%%%%
%%%%%%%%%%%%%%%%%%%%%%%%%%%%%%%%%%%%%%%%%%%%%%%%%%%%%%%%%%%%%%%%%%%%%

\section{Quotient algebras in monoidal categories}
\label{sec:quotient}

In this section, we discuss the construction of quotient algebras in monoidal categories. After presenting the categorical setting for this material, we expand on results from \cite{BD} to define quotient algebras as cokernels of weak ideal maps [Proposition~\ref{prop:quotalg}]. Then, we examine quotient algebras via monoidal functors [Proposition~\ref{prop:quot-functor}], especially for the associated graded functor constructed in the previous section [Corollary~\ref{cor:quot-gr}]. We discuss in Remark~\ref{rem:quot-gr} how this material could be applied to study filtered deformations of graded quotient algebras in monoidal categories. 
Recall that we assume Hypotheses~\ref{hyp:abelian} and \ref{hyp:strict} throughout, and recall  the terminology below.

%\begin{remark} \label{rem:ten-epic}
%Since the tensor product of $(\C, \otimes, \unit)$ is assumed to be biexact,  the tensor product of two epis in $\C$ is epic. Indeed, if $f_1:X_1\to Y_1$ and $f_2: X_2 \to Y_2$ are epis in $\C$, then $f_1 \otimes \id_{X_2}$ and $\id_{Y_1} \otimes f_2$ are epic. Thus, $f_1 \otimes f_2 = (\id_{Y_1} \otimes f_2)(f_1 \otimes \id_{X_2})$ is epic as well.
%\end{remark}

\begin{definition}[$f_1 \square f_2$] 
Let $f_1:X_1\to Y_1$ and $f_2: X_2 \to Y_2$ be morphisms in $\C$. We define their {\it pushout product} to be the unique morphism $f_1 {\tiny \square} f_2$ fitting into the commutative diagram below.
\[
\xymatrix@R=1.5pc{
X_1 \otimes X_2 \ar[r]^{f_1 \otimes \id_{X_2}} \ar[d]_{\id_{X_1} \otimes f_2} & Y_1 \otimes X_2 \ar[d] \ar@/^1.6pc/[rdd]^(.6){\id_{Y_1} \otimes f_2}& \\
X_1 \otimes Y_2 \ar[r] \ar@/_1.4pc/[rrd]_(.3){f_1 \otimes \id_{Y_2}}& \pushoutcorner (X_1 \otimes Y_2) +_{X_1 \otimes X_2} (Y_1 \otimes X_2) \ar@{..>}[rd]_{f_1 \square f_2}& \\
                &                 & Y_1 \otimes Y_2
}
\]
\end{definition}

\medskip

Since our monoidal category is assumed to be biexact, in particular, right exact in each slot, we have the following result.

\begin{lemma}\cite[Lemma~4.8]{RV} \label{lem:coker-pp}
We have that $\coker(f_1 \square f_2) \cong \coker(f_1) \otimes \coker(f_2)$, for any morphisms $f_1$ and $f_2$ in $\C$. \qed
\end{lemma}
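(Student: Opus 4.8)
The plan is to show that both $\coker(f_1 \square f_2)$ and $\coker(f_1) \otimes \coker(f_2)$ solve the same universal problem: each is the universal quotient of $Y_1 \otimes Y_2$ that annihilates the two morphisms $f_1 \otimes \id_{Y_2}$ and $\id_{Y_1} \otimes f_2$. Write $\pi_1 \colon Y_1 \to C_1 := \coker(f_1)$ and $\pi_2 \colon Y_2 \to C_2 := \coker(f_2)$ for the cokernel projections, and let $q \colon Y_1 \otimes Y_2 \to \coker(f_1 \square f_2)$ be the projection onto the cokernel of the pushout product. First I would characterize maps out of $\coker(f_1 \square f_2)$. By the definition of the pushout product, the pushout $P := (X_1 \otimes Y_2) +_{X_1 \otimes X_2} (Y_1 \otimes X_2)$ carries two canonical structure maps whose composites with $f_1 \square f_2$ are exactly $f_1 \otimes \id_{Y_2}$ and $\id_{Y_1} \otimes f_2$. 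Since the two structure maps into a pushout are jointly epic, a morphism $h \colon Y_1 \otimes Y_2 \to Z$ satisfies $h\,(f_1 \square f_2) = 0$ if and only if $h\,(f_1 \otimes \id_{Y_2}) = 0$ and $h\,(\id_{Y_1} \otimes f_2) = 0$; equivalently, $h$ factors uniquely through $q$ precisely when it kills both composites.

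Next I would give the parallel characterization for $\pi_1 \otimes \pi_2 \colon Y_1 \otimes Y_2 \to C_1 \otimes C_2$. Applying the right exact functor $(-\otimes Y_2)$ to the sequence $X_1 \xrightarrow{f_1} Y_1 \xrightarrow{\pi_1} C_1 \to 0$ shows that $\pi_1 \otimes \id_{Y_2}$ is the cokernel of $f_1 \otimes \id_{Y_2}$, and applying $(C_1 \otimes -)$ to $X_2 \xrightarrow{f_2} Y_2 \xrightarrow{\pi_2} C_2 \to 0$ shows that $\id_{C_1} \otimes \pi_2$ is the cokernel of $\id_{C_1} \otimes f_2$; here I use the biexactness of Hypothesis~\ref{hyp:strict}. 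Chaining these, and noting $\pi_1 \otimes \pi_2 = (\id_{C_1} \otimes \pi_2)(\pi_1 \otimes \id_{Y_2})$, a map $h \colon Y_1 \otimes Y_2 \to Z$ factors through $\pi_1 \otimes \pi_2$ if and only if $h\,(f_1 \otimes \id_{Y_2}) = 0$ and the induced map $\bar{h} \colon C_1 \otimes Y_2 \to Z$ satisfies $\bar{h}\,(\id_{C_1} \otimes f_2) = 0$. I would then translate the second condition back to $h\,(\id_{Y_1} \otimes f_2) = 0$ using the identity $(\id_{C_1} \otimes f_2)(\pi_1 \otimes \id_{X_2}) = (\pi_1 \otimes \id_{Y_2})(\id_{Y_1} \otimes f_2)$ and cancelling the epimorphism $\pi_1 \otimes \id_{X_2}$. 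Thus $h$ factors through $\pi_1 \otimes \pi_2$ under exactly the same condition as it factors through $q$, and the factorization is unique since $\pi_1 \otimes \pi_2$ is a composite of epimorphisms.

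Finally, since $q$ and $\pi_1 \otimes \pi_2$ are both epimorphisms out of $Y_1 \otimes Y_2$ enjoying the identical universal property, the uniqueness of the universal quotient yields a canonical isomorphism $\coker(f_1 \square f_2) \cong \coker(f_1) \otimes \coker(f_2)$ compatible with the two projections from $Y_1 \otimes Y_2$. The main obstacle is the middle step: one must set up the two iterated right exactness computations correctly and then perform the epimorphism cancellation to see that the single combined condition $\bar{h}\,(\id_{C_1} \otimes f_2) = 0$ is equivalent to $h\,(\id_{Y_1} \otimes f_2) = 0$. Once this is in place, the joint epimorphism property of the pushout and the biexactness hypothesis supply everything else.
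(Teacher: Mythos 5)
Your argument is correct. Note that the paper itself does not prove this lemma at all---it is stated with a citation to \cite[Lemma~4.8]{RV} and a \verb|\qed|---so your write-up supplies a proof where the authors defer to a reference. The route you take is the standard one and it goes through: the coprojections into the pushout are jointly epic, so a map $h\colon Y_1\otimes Y_2\to Z$ kills $f_1\square f_2$ if and only if it kills both $f_1\otimes\id_{Y_2}$ and $\id_{Y_1}\otimes f_2$; on the other side, right exactness of $(-\otimes Y_2)$ and $(C_1\otimes-)$ identifies $\pi_1\otimes\id_{Y_2}$ and $\id_{C_1}\otimes\pi_2$ as the relevant cokernels, and your cancellation of the epimorphism $\pi_1\otimes\id_{X_2}$ (epic because $\pi_1$ is epic and $-\otimes X_2$ is exact by Hypothesis~\ref{hyp:strict}) correctly converts the condition $\bar h\,(\id_{C_1}\otimes f_2)=0$ into $h\,(\id_{Y_1}\otimes f_2)=0$. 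Since $q$ and $\pi_1\otimes\pi_2$ are epimorphisms out of $Y_1\otimes Y_2$ with the same factorization property, applying each universal property to the other projection produces mutually inverse comparison maps, giving the canonical isomorphism compatible with the projections---which is exactly the form in which the lemma is used in Proposition~\ref{prop:quotalg}, where $\coker(\phi_I\square\phi_I)$ is identified with $\pi_I\otimes\pi_I$. The only thing I would add for completeness is the one-line verification that $\pi_1\otimes\pi_2$ does annihilate both composites (e.g.\ $(\pi_1\otimes\pi_2)(f_1\otimes\id_{Y_2})=\pi_1 f_1\otimes\pi_2=0$), which you implicitly use when invoking the universal property in both directions.
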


Next, we introduce quotient algebras in monoidal categories via the result below; this result is \cite[Proposition~2.8]{BD} in the framework above.

\begin{proposition}[$A/I$, $\pi_I$, $\overline{m}$, $\overline{u}$]  \label{prop:quotalg}
Take $(A,m,u)$ in ${\sf Alg}(\C)$ with weak ideal $(I,\lambda_I, \rho_I,\phi_I)$ of $A$ in $\C$. 
Denote
$$A/I:=\coker(\phi_I), \quad \quad  \; \; \pi_I:A \to A/I \;\text{ \textnormal{(}canonical epi\textnormal{)}}.$$
Then, there exists a unique morphism $\overline{m}: A/I \otimes A/I \to A/I$ in $\C$, where 
\begin{equation} \label{eq:mult-quot}
\pi_I \; m = \overline{m}(\pi_I \otimes \pi_I),
\end{equation} 
along with $\overline{u} := \pi_I \; u: \unit \to A/I$, so that $(A/I, \; \overline{m}, \; \overline{u})$ is an algebra in $\C$.
\end{proposition}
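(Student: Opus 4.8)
The plan is to obtain $\overline{m}$ from a single application of the universal property of cokernels, and then to verify the algebra axioms for $(A/I,\overline{m},\overline{u})$ by cancelling epimorphisms. The first step is to recognize $\pi_I \otimes \pi_I$ as a cokernel morphism. Applying Lemma~\ref{lem:coker-pp} with $f_1 = f_2 = \phi_I$ gives $\coker(\phi_I \square \phi_I) \cong \coker(\phi_I) \otimes \coker(\phi_I) = (A/I) \otimes (A/I)$, and one checks that the induced cokernel map is exactly $\pi_I \otimes \pi_I : A \otimes A \to (A/I)\otimes(A/I)$. Consequently, to produce $\overline{m}$ satisfying $\overline{m}(\pi_I \otimes \pi_I) = \pi_I m$ it suffices to verify that $\pi_I\, m\,(\phi_I \square \phi_I) = 0$; the resulting $\overline{m}$ is then automatically unique since $\pi_I \otimes \pi_I$ is epic (being a tensor product of epimorphisms, by biexactness of $\otimes$).

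The vanishing $\pi_I\,m\,(\phi_I \square \phi_I)=0$ is the crux of the argument, and it is the main obstacle; it is also where the weak ideal hypotheses enter. By the definition of the pushout product, the source of $\phi_I \square \phi_I$ is the pushout of $\id_A \otimes \phi_I$ and $\phi_I \otimes \id_A$, with the two canonical legs into $A \otimes A$ equal to $\id_A \otimes \phi_I : A \otimes I \to A \otimes A$ and $\phi_I \otimes \id_A : I \otimes A \to A \otimes A$. Thus, by the universal property of the pushout, it is enough to show both $\pi_I\,m\,(\id_A \otimes \phi_I)=0$ and $\pi_I\,m\,(\phi_I \otimes \id_A)=0$. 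For the first, the left weak ideal identity $m(\id_A \otimes \phi_I)=\phi_I \lambda_I$ yields $\pi_I\,m\,(\id_A \otimes \phi_I)=\pi_I\,\phi_I\,\lambda_I=0$, since $\pi_I\,\phi_I=0$ by $\pi_I=\coker(\phi_I)$; the second follows symmetrically from the right weak ideal identity $m(\phi_I \otimes \id_A)=\phi_I \rho_I$.

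It remains to verify the algebra axioms, which I would obtain by precomposing with suitable epimorphisms and cancelling. For associativity, I would precompose $\overline{m}(\overline{m} \otimes \id_{A/I})$ and $\overline{m}(\id_{A/I} \otimes \overline{m})$ with the epimorphism $\pi_I \otimes \pi_I \otimes \pi_I$ (again epic by biexactness of $\otimes$); repeated use of $\overline{m}(\pi_I \otimes \pi_I)=\pi_I m$ reduces the two composites to $\pi_I\,m\,(m \otimes \id_A)$ and $\pi_I\,m\,(\id_A \otimes m)$, which agree by associativity of $m$, so the epimorphism may be cancelled. For unitality, I would precompose $\overline{m}(\overline{u} \otimes \id_{A/I})$ with $\pi_I$; unwinding $\overline{u}=\pi_I u$ together with the defining relation reduces this to $\pi_I\,m\,(u \otimes \id_A)=\pi_I$ by unitality of $m$, and cancelling $\pi_I$ gives $\overline{m}(\overline{u}\otimes \id_{A/I})=\id_{A/I}$, with the right unit axiom handled symmetrically. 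Thus, once $\overline{m}$ is constructed, the remaining steps are routine epi-cancellation arguments resting only on the biexactness of $\otimes$ and the algebra axioms for $(A,m,u)$.
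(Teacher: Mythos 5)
Your proposal is correct and follows essentially the same route as the paper's proof: identify $\pi_I \otimes \pi_I$ as $\coker(\phi_I \square \phi_I)$ via Lemma~\ref{lem:coker-pp}, deduce $\pi_I\,m\,(\phi_I\square\phi_I)=0$ from the two weak-ideal identities together with $\pi_I\phi_I=0$, and then verify associativity and unitality by cancelling the epimorphisms $\pi_I^{\otimes 3}$ and $\pi_I$. No gaps.
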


\begin{proof}
Consider the following diagram.
{\small
\[
\xymatrix@R=1.2pc@C=4.5pc{
I \otimes I \ar[r]^{\phi_I \otimes \id} \ar[d]_{\id \otimes \phi_I} & A \otimes I \ar[d] \ar@/^1.6pc/[rdd]^(.6){\id \otimes \phi_I}& &\\
I \otimes A \ar[r] \ar@/_1.4pc/[rrd]_(.3){\phi_I \otimes \id}& \pushoutcorner (A \otimes I) +_{I \otimes I} (I \otimes A) \ar[rd]_{\phi_I \square \phi_I}& & \\
                &                 & A \otimes A \ar[r]^(.55){\coker(\phi_I \square \phi_I)} \ar[d]_m& C \ar@{..>}[d]^{\overline{m}}\\
                && A \ar[r]^{\pi_I} & A/I
}
\]
}

Here, $C \cong A/I \otimes A/I$ and $\coker(\phi_I \square \phi_I) = \pi_I \otimes \pi_I$ by Lemma~\ref{lem:coker-pp}. Moreover, $\overline{m}$ exists as pictured above by the universal property of cokernels. Indeed, 
$$\pi_I\; m(\id_A \otimes \phi_I) = \pi_I \; \phi_I\; \lambda_I \;=\; 0 \;  = \pi_I \; \phi_I\; \rho_I = \pi_I\; m(\phi_I \otimes \id_A),$$
and thus, the unique pushout morphism from $(A \otimes I) +_{I \otimes I} (I \otimes A)$ to $A/I$ is the zero map. So, $\pi_I \; m(\phi_I \square \phi_I) =0$, and $\overline{m}$ exists as claimed.

Now it suffices to show that $\overline{m}$ is associative and $\overline{u}$ is unital with respect to $\overline{m}$. Consider the following calculation:
\[
\begin{array}{rlll}
\smallskip
    \overline{m}(\overline{m} \otimes \id_{A/I})(\pi_I^{\otimes 3}) & = \overline{m}(\pi_I \otimes \pi_I)(\id_A \otimes m) & = \pi_I \;m\;(\id_A \otimes m) &\\
    \smallskip
  & = \pi_I \;m\;(m \otimes \id_A)  & = \overline{m}(\pi_I \otimes \pi_I)(m \otimes \id_A) \\
  & = \overline{m}(\id_{A/I} \otimes \overline{m})(\pi_I^{\otimes 3}).
\end{array}
\]
Here, the third equation uses the associativity of $m$ and the rest of the equations use \eqref{eq:mult-quot}.
Now  $\overline{m}$ is associative as $\pi_I^{\otimes 3}$ is epic. Moreover,
$$\overline{m}(\overline{u} \otimes \id_{A/I}) (\id_{\unit} \otimes \pi_I)= \overline{m}(\pi_I \otimes \pi_I)(u \otimes \id_A) = 
\pi_I \; m\;(u \otimes \id_A) = \pi_I,$$
where the last equation holds as $u$ is left unital with respect to $m$. Since $\pi_I$ is epic, the left unital condition holds for $A/I$. Likewise, the right unital condition holds for $A/I$.
\end{proof}

\begin{definition}
We refer to the algebra $(A/I,\; \overline{m}, \; \overline{u})$ in Proposition~\ref{prop:quotalg} as the {\it quotient algebra of $A$ by the weak ideal $I$}. 
\end{definition}

%Next, we turn our attention to filtered and graded quotient algebras.

%\begin{lemma}  If $A$ is a graded (filtered) algebra in $\C$ and $(I,\phi_I)$ is a graded (resp., filtered) ideal of $A$ in $\C$, then the quotient algebra $(A/I, \overline{m}, \overline{u})$  is naturally a graded (resp., filtered) algebra in $\C$. 
%\end{lemma}

%\begin{proof} 
%This follows immediately from Lemma~\ref{lem:gr-fil-epic} and Proposition~\ref{prop:quotalg}.
%\end{proof}

Now we show how quotient algebras are related via monoidal functors.

\begin{proposition} \label{prop:quot-functor}  Let $(F, F_2, F_0): \mathcal{C} \to \mathcal{D}$ be a right exact, monoidal functor. Take an algebra $A$ with weak ideal $(I, \phi_I)$ in $\C$. Then, $F(A)/F(I)$ and $ F(A/I)$ are isomorphic as algebras in $\D$.
\end{proposition}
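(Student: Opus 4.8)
The plan is to realize both algebras as the cokernel of the single morphism $F(\phi_I)$, and then upgrade the resulting object-level isomorphism to an algebra isomorphism by exploiting an epimorphism-cancellation argument. First I would invoke Proposition~\ref{prop:mon-preserve}: since $F$ is monoidal, $(F(A),\, F(m)F_2(A,A),\, F(u)F_0)$ is an algebra in $\D$, and $(F(I), F(\phi_I))$ is a weak ideal of it by part~(c). Hence Proposition~\ref{prop:quotalg} applies and produces the quotient algebra $F(A)/F(I) = \coker(F(\phi_I))$ with structure maps $\overline{m}'$ and $\overline{u}' = \pi_{F(I)}\,F(u)F_0$ characterized by $\pi_{F(I)}\, m_{F(A)} = \overline{m}'(\pi_{F(I)} \otimes \pi_{F(I)})$, where $m_{F(A)} := F(m)F_2(A,A)$. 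On the other side, $A/I = \coker(\phi_I)$ is an algebra by Proposition~\ref{prop:quotalg}, and so $F(A/I)$ is an algebra by Proposition~\ref{prop:mon-preserve}(a), with multiplication $F(\overline{m})\,F_2(A/I,A/I)$ and unit $F(\overline{u})F_0$.

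The central observation is that $F$ is right exact, hence preserves cokernels. Applying $F$ to the exact sequence $I \xrightarrow{\phi_I} A \xrightarrow{\pi_I} A/I \to 0$ shows that $F(\pi_I)$ is a cokernel of $F(\phi_I)$. Since $\pi_{F(I)}$ is, by definition, also a cokernel of $F(\phi_I)$, the universal property of cokernels yields a unique isomorphism $\zeta: F(A/I) \xrightarrow{\sim} F(A)/F(I)$ in $\D$ satisfying $\zeta\, F(\pi_I) = \pi_{F(I)}$. This gives the isomorphism at the level of objects.

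It then remains to check that $\zeta$ respects the multiplications and units. For the multiplication I would use that $F(\pi_I)$ is epic (right exact functors preserve epimorphisms) together with biexactness of $\otimes$ (Hypothesis~\ref{hyp:strict}), so that $F(\pi_I) \otimes F(\pi_I)$ is epic; it therefore suffices to compare $\zeta\,F(\overline{m})F_2(A/I,A/I)$ with $\overline{m}'(\zeta \otimes \zeta)$ after precomposing with this epimorphism. Commuting $F_2$ past $F(\pi_I)\otimes F(\pi_I)$ via naturality of $F_2$, then applying the defining relation~\eqref{eq:mult-quot} for $\overline{m}$, the identity $\zeta\,F(\pi_I) = \pi_{F(I)}$, and the defining relation for $\overline{m}'$, both sides collapse to $\pi_{F(I)}\, m_{F(A)}$ and hence agree. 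The unit condition is immediate: $\zeta\,F(\overline{u})F_0 = \zeta\,F(\pi_I)F(u)F_0 = \pi_{F(I)}F(u)F_0 = \overline{u}'$, using $\overline{u} = \pi_I u$ and $\zeta\,F(\pi_I)=\pi_{F(I)}$.

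The only genuine subtlety, and the step I expect to be the main obstacle, is the bookkeeping in the multiplicative compatibility: one must correctly commute $F_2$ through the tensor of projections and then cancel the epimorphism $F(\pi_I)\otimes F(\pi_I)$. Everything else is formal once right-exactness supplies the canonical cokernel isomorphism $\zeta$; in particular, no new coherence data need to be constructed, since the algebra structures on both sides are already pinned down by epimorphisms out of $F(A)$ and $F(A)\otimes F(A)$.
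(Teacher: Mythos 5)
Your proposal is correct and follows essentially the same route as the paper: both identify $F(\pi_I)$ and $\pi_{F(I)}$ as cokernels of $F(\phi_I)$ (using right exactness for the former), obtain the canonical isomorphism from the universal property, and then verify multiplicativity by cancelling an epimorphism of the form (projection)\,$\otimes$\,(projection) after commuting $F_2$ through via naturality and invoking \eqref{eq:mult-quot}. The only cosmetic differences are the direction of the isomorphism and that the paper phrases the key middle step as ``$F(\pi_I)$ is an algebra map,'' which is exactly the computation you carry out explicitly.
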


\begin{proof}
Consider the commutative diagram below:
\begin{equation} \label{eq:muF}
\xymatrix@R=.7pc@C=8pc{
&& F(A)/F(I) \ar@/_.5pc/@{..>}[dd]_{\omega_F}\\
F(I) \ar[r]^{\phi_{F(I)} \;  \overset{\textnormal{Prop.\ref{prop:mon-preserve}(c)}}{=} \; F(\phi_I)} & F(A) \ar[ru]^{\pi_{F(I)}} \ar[rd]_{F(\pi_I)}&  \\
&& F(A/I) \ar@/_.5pc/@{..>}[uu]_{\omega'_F}
}
\end{equation}
Here, $\omega_F$ is the unique morphism that makes the diagram commute due to the universal property of the cokernel map $\pi_{F(I)}$. Moreover, right exact functors commute with cokernels, so $F(\pi_I) = F(\coker(\phi_I)) = \coker(F(\phi_I))$. Thus, $\omega'_F$ is the unique morphism that makes the diagram commute due to the universal property of the cokernel map $F(\pi_I)$. Now by the uniqueness of $\omega_F$ and $\omega'_F$, we must have that $\omega'_F \; \omega_F = \id_{F(A)/F(I)}$ and $\omega_F \; \omega'_F = \id_{F(A/I)}$. Thus, $\omega_F$ is an isomorphism in $\D$.

Next, we verify that $\omega_F$ is an algebra morphism. We compute:
\[
\begin{array}{rl}
\smallskip
\omega_F \; m_{F(A)/F(I)} \; (\pi_{F(I)} \otimes \pi_{F(I)}) 
& = \omega_F \; \pi_{F(I)} \; m_{F(A)}\\
\smallskip
&= F(\pi_I) \; m_{F(A)}\\
\smallskip
&= m_{F(A/I)} \; [F(\pi_I) \otimes F(\pi_I)]\\
\smallskip
&= m_{F(A/I)} \; [\omega_F \; \pi_{F(I)} \otimes \omega_F \; \pi_{F(I)} ]\\
\smallskip
& = m_{F(A/I)} \; (\omega_F \otimes \omega_F) \; (\pi_{F(I)} \otimes \pi_{F(I)})
\end{array}
\]
The first equation holds by \eqref{eq:mult-quot}; the second and fourth equations hold by \eqref{eq:muF}; the third equation holds since $F(\pi_I)$ is an algebra map (indeed, $\pi_I$ is an algebra map by Proposition~\ref{prop:quotalg} and $F$ is monoidal); and the last equation follows from a rearrangement of terms. Since $\pi_{F(I)} \otimes \pi_{F(I)}$ is epic, we obtain that $\omega_F$ is multiplicative. 
Moreover, we compute:
\[
\begin{array}{rlll}
\smallskip
\omega_F \;u_{F(A)/F(I)} &= \omega_F \;\pi_{F(I)} \;u_{F(A)} &= \omega_F \; \pi_{F(I)} \; F(u_A)  \;F_0 &= F(\pi_I)\;  F(u_A) \; F_0 \\
&= F(\pi_I \; u_A) \; F_0
&=  F(u_{A/I}) \;  F_0 &= u_{F(A/I)}.
\end{array}
\]
Here, the first and fifth equations hold by Proposition~\ref{prop:quotalg}; the second and last equations hold as $F$ is monoidal; and the third equation follows from \eqref{eq:muF}.  So, $\omega_F$ is unital, and thus, $\omega_F$ is an algebra morphism, as required.
\end{proof}

\begin{corollary} \label{cor:quot-gr}
If $A$ is a filtered algebra in $\C$, and $I$ is a filtered weak ideal of $A$ in~$\C$, then 
$$\gr(A)/\gr(I) \; \cong \; \gr(A/I)$$
as graded algebras in $\C$.
\end{corollary}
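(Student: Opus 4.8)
The plan is to recognize this corollary as a direct application of Proposition~\ref{prop:quot-functor} to the associated graded functor $\gr$, so that the bulk of the work reduces to verifying that $\gr$, together with its domain and codomain, meets the hypotheses of that proposition. First I would observe that ${\sf Fil}(\C)$ and ${\sf Gr}(\C)$ are themselves monoidal categories satisfying Hypotheses~\ref{hyp:abelian} and~\ref{hyp:strict}, by Corollary~\ref{cor:gr-fil-epic}. Consequently, the quotient-algebra machinery of this section---in particular Proposition~\ref{prop:quotalg} and Proposition~\ref{prop:quot-functor}---applies verbatim when the ambient monoidal category is taken to be ${\sf Fil}(\C)$ or ${\sf Gr}(\C)$. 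Under this identification, a filtered algebra $A$ is precisely an algebra in ${\sf Fil}(\C)$, a filtered weak ideal $I$ of $A$ is precisely a weak ideal of that algebra, and $A/I$ is the quotient algebra of $A$ by $I$ formed inside ${\sf Fil}(\C)$.

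Next I would check that the functor $\gr\colon {\sf Fil}(\C)\to{\sf Gr}(\C)$ satisfies the two hypotheses of Proposition~\ref{prop:quot-functor}: it is monoidal by Theorem~\ref{thm:gr}, and it is right exact by Proposition~\ref{prop:gr-leftadj} (being a left adjoint to $\textnormal{triv}$). Applying Proposition~\ref{prop:quot-functor} with $F=\gr$ to the algebra $A$ and its weak ideal $(I,\phi_I)$ then yields an isomorphism
\[
\gr(A)/\gr(I)\;\cong\;\gr(A/I)
\]
of algebras in ${\sf Gr}(\C)$, which is exactly an isomorphism of graded algebras in $\C$, as desired.

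The only point requiring a moment's care is the consistency of the weak-ideal data: in Proposition~\ref{prop:quot-functor} the quotient $\gr(A)/\gr(I)$ is formed using the morphism $\phi_{\gr(I)}=\gr(\phi_I)$, and one should confirm that this is the same weak-ideal structure on $\gr(I)$ over $\gr(A)$ that is supplied by Proposition~\ref{prop:mon-preserve}(c) (equivalently Corollary~\ref{cor:gr-pres}), so that the symbol $\gr(A)/\gr(I)$ in the statement is unambiguous. I do not anticipate a genuine obstacle here, since the substantive content has already been established in Proposition~\ref{prop:quot-functor}; the corollary is obtained simply by specializing $F$ to $\gr$ once its monoidality and right exactness are invoked. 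The main thing to get right is the bookkeeping that identifies filtered (resp.\ graded) algebras and their weak ideals with algebras and weak ideals internal to the monoidal categories ${\sf Fil}(\C)$ and ${\sf Gr}(\C)$.
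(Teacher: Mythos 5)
Your proposal is correct and follows exactly the paper's own argument: invoke Corollary~\ref{cor:gr-fil-epic} so that the quotient-algebra machinery applies to ${\sf Fil}(\C)$ and ${\sf Gr}(\C)$, note that $\gr$ is monoidal by Theorem~\ref{thm:gr} and right exact by Proposition~\ref{prop:gr-leftadj}, and apply Proposition~\ref{prop:quot-functor} with $F=\gr$. The extra bookkeeping remark about the weak-ideal structure on $\gr(I)$ is a reasonable sanity check but not something the paper dwells on.
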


\begin{proof}
We have that $\gr$ is a right exact, monoidal functor from ${\sf Fil}(\C)$ to ${\sf Gr}(\C)$ by Theorem~\ref{thm:gr} and Proposition~\ref{prop:gr-leftadj}. So the result follows from Corollary~\ref{cor:gr-fil-epic}  and Proposition~\ref{prop:quot-functor}.
\end{proof}

Next, we consider filtered deformations of quotient algebras in $\C$. To do so, consider the construction below.

\begin{definition}[$(-)^f$] \label{not:fil-tensor}
Define the functor $$(-)^f: {\sf Gr}(\C) \longrightarrow {\sf Fil}(\C)$$ to be given by $(\coprod_{i \in \mathbb{N}_0} X_i)^f = (X, F_X)$, for $X = \coprod_{i \in \mathbb{N}_0} X_i$ and $F_X(j) = \coprod_{i =0}^j X_i$. 
\end{definition}

\begin{lemma}\label{lem:FilFunctor}
The canonical filtration functor $F:=(-)^f$ is monoidal with
$F_2$ given by inclusion morphisms and $F_0$ given by the identity morphism.
\qed
\end{lemma}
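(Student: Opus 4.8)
The plan is to make both filtrations entering $F_2$ completely explicit and then observe that every arrow in sight is a canonical inclusion of a subcoproduct, so that the coherence axioms of Definition~\ref{def:monfunc} hold for formal reasons. Throughout I fix graded objects $X=\coprod_a X_a$ and $Y=\coprod_b Y_b$ in ${\sf Gr}(\C)$, so that $(-)^f$ equips $X$ with the filtration $F_X(i)=\coprod_{a\le i}X_a$ and similarly for $Y$.

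First I would identify the two filtered objects to be compared by $F_2(X,Y)$. Since $\otimes$ is biexact [Hypothesis~\ref{hyp:strict}] it commutes with the finite coproducts appearing here, giving $F_X(i)\otimes F_Y(j)\cong \coprod_{a\le i,\,b\le j}X_a\otimes Y_b$. The filtered tensor product $F(X)\otimes F(Y)$ is the colimit \eqref{eq:tens-filt}, namely $F_{F(X)\otimes F(Y)}(k)=\mathrm{colim}_{i+j\le k}F_X(i)\otimes F_Y(j)$; as all of its transition maps are coproduct inclusions, this colimit is $\coprod_{a+b\le k}X_a\otimes Y_b$. On the other hand $F(X\otimes Y)$ has filtration $F_{X\otimes Y}(k)=\coprod_{m\le k}(X\otimes Y)_m=\coprod_{a+b\le k}X_a\otimes Y_b$. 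Hence the two filtrations agree level by level.

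Next I would construct $F_2(X,Y)$ through Lemma~\ref{lem:filMor}: it is enough to specify maps $f_{i,j}\colon F_X(i)\otimes F_Y(j)\to F_{X\otimes Y}(i+j)$ subject to the two displayed squares. I take $f_{i,j}$ to be the inclusion $\coprod_{a\le i,\,b\le j}X_a\otimes Y_b\hookrightarrow\coprod_{a+b\le i+j}X_a\otimes Y_b$ --- these are the ``inclusion morphisms'' of the statement. The squares of Lemma~\ref{lem:filMor} then commute on the nose, because precomposing with $\iota^X_i\otimes\id$ or $\id\otimes\iota^Y_j$ and postcomposing with $\iota^{X\otimes Y}_{i+j}$ merely enlarge the index set of an inclusion. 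I would emphasise here the one genuinely non-formal point: although the components $f_{i,j}$ are proper inclusions, the filtered morphism they assemble to is, by the level-wise identification of the previous paragraph, an isomorphism, so $F$ is in fact strong monoidal. For the unit object I set $F_0=\id$: as $\unit_{{\sf Gr}(\C)}$ is $\unit$ concentrated in degree $0$, both $F(\unit_{{\sf Gr}(\C)})$ and $\unit_{{\sf Fil}(\C)}$ carry the constant filtration with value $\unit$ in every degree.

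Finally I would check the axioms of Definition~\ref{def:monfunc}. Naturality of $F_2$ is immediate, since a graded morphism respects the coproduct decomposition and hence commutes with the subcoproduct inclusions $f_{i,j}$. For the associativity constraint, both composites carry $F_X(i)\otimes F_Y(j)\otimes F_Z(k)$ into $F_{X\otimes Y\otimes Z}(i+j+k)$ by the single inclusion $\coprod_{a\le i,\,b\le j,\,c\le k}X_a\otimes Y_b\otimes Z_c\hookrightarrow\coprod_{a+b+c\le i+j+k}X_a\otimes Y_b\otimes Z_c$, so they coincide; this can be recorded cleanly against the compatibility \eqref{eq:AssoCo} of the colimit maps $\beta$. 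The two unitality constraints collapse, after inserting $F_0=\id$ and the degree-$0$ concentration of the unit, to the statement that the inclusion $\coprod_{a\le i}X_a\hookrightarrow\coprod_{a\le i}X_a$ is the identity. The main obstacle is thus confined to the colimit identification of the second paragraph --- commuting $\otimes$ past the coproducts and recognising the non-directed colimit over $\{(i,j):i+j\le k\}$ as the union $\coprod_{a+b\le k}X_a\otimes Y_b$; once this is in place, every remaining diagram commutes because its edges are canonical inclusions.
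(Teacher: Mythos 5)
Your proposal is correct; the paper states this lemma without proof (it carries a \qed), and your argument supplies exactly the details the authors intend, namely $F_2$ as the subcoproduct inclusions and $F_0=\id$. The only point deserving care is the one you flag: identifying $\mathrm{colim}_{i+j\le k}F_X(i)\otimes F_Y(j)$ with $\coprod_{a+b\le k}X_a\otimes Y_b$ over the non-directed poset $\{(i,j):i+j\le k\}$, which works because each index $(a,b)$ with $a+b\le k$ lies in the poset and serves as a common lower bound of any two vertices containing it, so the cocone components are forced to agree and the colimit is the union of subcoproducts.
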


\begin{remark} \label{rem:quot-gr}
Note that  $\gr\; (-)^f$ is the identity functor on ${\sf Gr}(\C)$. Now if $B \in {\sf Alg}({\sf Gr}(\C))$, then $A := B^f \in {\sf Alg}({\sf Fil}(\C))$ by Lemma \ref{lem:FilFunctor}. In this case,   $A/I$ is a filtered deformation of $B/\gr(I)$ by Corollary~\ref{cor:quot-gr}. But, as in the case for $\C = \Vec$, computing $\gr(I)$ can be tedious; Poincar\'{e}-Birkhoff-Witt theorems and related homological methods are used to address this problem \cite{SW}. It would be interesting to develop such techniques to study filtered deformations of graded quotient algebras in monoidal categories.
\end{remark}

%\begin{remark} \label{rem:quot-gr}
%When $\omega_{gr}$ is an isomorphism in ${\sf Gr}(\C)$, we obtain that the quotient algebra $A/I$ is a filtered deformation of $\gr(A)/\gr(I)$, the latter of which may be easier to compute than $\gr(A/I)$. But, as in the case for $\C = \Vec$, checking that $\omega_{\gr}$ is an isomorphism will need to be done on a case-by-case basis. We expect that theory in the vein of Braverman-Gaitsgory's work \cite{BrGa} can be developed for certain quotient algebras in monoidal categories to establish precise conditions for which $\omega_{\gr}$ is an isomorphism.
%\end{remark}

%%%%%%%%%%%%%%%%%%%%%%%%%%%%%%%%%%%%%%%%%%%%%%%%%%%%%%%%%%%%%%%%%%%%%
%%%%%%%%%%%%%%%%%%%%%%%%%%%%%%%%%%%%%%%%%%%%%%%%%%%%%%%%%%%%%%%%%%%%%
%%%%%%%%%%%%%%%%%%%%%%%%%%%%%%%%%%%%%%%%%%%%%%%%%%%%%%%%%%%%%%%%%%%%%

\section{Frobenius algebras in rigid monoidal categories} \label{sec:newDefn}

In this section, we  provide equivalent conditions for an algebra in a rigid monoidal category $\C$ [Definition~\ref{def:rigid}] to admit the structure of a Frobenius algebra in $\C$. This builds on work of Fuchs-Stigner  \cite{FS}. Recall that all monoidal categories in this work are assumed to be abelian, strict with $\otimes$ biexact [Hypotheses~\ref{hyp:abelian}, \ref{hyp:strict}]. Consider the terminology below.

\begin{definition}[$\ev_X$, $\coev_X$, $\ev'_X$, $\coev'_X$]  \cite[Section~2.10]{EGNO} \label{def:rigid}
An object $X$ in a monoidal category $\C$ is called \textit{rigid} if it has left and right duals. Namely, there exist objects $X^*$ and ${}^*X \in \C$ with  co/evaluation maps,
\begin{align*}
\ev_X: X^* \otimes X \rightarrow \mathbbm{1}, & \hspace{2cm} \coev_X: \mathbbm{1} \rightarrow X \otimes X^*,  \\
\ev'_X: X \otimes \leftdual X \rightarrow \mathbbm{1}, & \hspace{2cm} \coev'_X: \mathbbm{1} \rightarrow \leftdual X \otimes X, 
\end{align*} 
so that $(\id_{X} \otimes \ev_X)(\coev_X \otimes \id_{X})$, $(\ev_X \otimes \id_{X^*})(\id_{X^*} \otimes \coev_X)$, $(\ev'_X \otimes \id_{X})(\id_{X} \otimes \coev'_X)$, and $(\id_{\leftdual X} \otimes \ev'_X)(\coev'_X \otimes \id_{\leftdual X})$ are all identity morphisms. Moreover, $\C$ is called {\it rigid} if all of its objects are rigid.
\end{definition}

\begin{remark}
When $\C$ is an abelian, rigid monoidal category, we do not need the assumption that $\otimes$ is biexact as this is implied by \cite[Proposition~4.2.1]{EGNO}. 
\end{remark}

 Now we present  the main result of this section.

\begin{theorem}\label{thm:defFrob}
Take $\C$ a rigid monoidal category, and take $(A,m,u)\in {\sf Alg}(\C)$. Then the following conditions are equivalent:

\begin{enumerate}[(a),font=\upshape]
    \item There exist morphisms $\Delta:A \rightarrow A \otimes A$ and $\ep: A \rightarrow \mathbbm{1}$ in $\C$ such that $(A,m,u,\Delta,\ep)$ is in ${\sf FrobAlg(\C)}$.
    
    \smallskip
    
    \item There exist morphisms $p: A \otimes A \rightarrow \mathbbm{1}$ and $ q: \mathbbm{1} \rightarrow A \otimes A$ in $\C$ such that 
    \[ \hspace{1cm}  p (m \otimes \id_A) =  p (\id_A \otimes m), \hspace{0.5cm} (p \otimes \id_A)(\id_A \otimes q) = \id_A = (\id_A \otimes p)(q \otimes \id_A). \]

    \item There exists an isomorphism $\Phi_l: A \rightarrow \leftdual A$ of left $A$-modules in $\C$, with left $A$-action maps  $\lambda_A = m$ and $\lambda_{\leftdual A} = (\id_{\leftdual A} \otimes \ev'_A)(\id_{\leftdual A} \otimes m \otimes \id_{\leftdual A})(\coev'_A \otimes \id_{A \otimes \leftdual A})$.
    
    \medskip
    
    \item There exists an isomorphism $\Phi_r: A \rightarrow A^*$ of right $A$-modules in $\C$, with right $A$-action maps $\rho_A = m$ and $\rho_{A^*} = (\ev_A \otimes \id_{A^*})(\id_{A^*} \otimes m \otimes \id_{A^*})(\id_{A^* \otimes A}\otimes \coev_A)$.
    
    \medskip
    
    \item There exists a morphism $\nu: A \rightarrow \mathbbm{1}$ in $\C$ so that, if a left or right weak ideal $(I, \lambda_I, \phi_I)$ of $A$ factors through $\ker(\nu)$, then $\phi_I$ is a zero morphism in $\C$.

    \medskip
    
    \item There exists a morphism $\nu: A \rightarrow \mathbbm{1}$ in $\C$ so that, if a left or right  ideal $(I, \lambda_I, \phi_I)$ of $A$ factors through $\ker(\nu)$, then $\phi_I$ is a zero morphism in $\C$.
\end{enumerate}
\end{theorem}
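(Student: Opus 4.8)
The plan is to treat (a)--(d) as one block of ``Frobenius-structure'' conditions, following Fuchs--Stigner, and then to attach the ``Frobenius-form'' conditions (e) and (f) through the sequence $(b)\Rightarrow(e)\Rightarrow(f)\Rightarrow(c)$, which closes the loop. Concretely, I would first establish $(a)\Leftrightarrow(b)\Leftrightarrow(c)\Leftrightarrow(d)$, then prove $(b)\Rightarrow(e)$, observe that $(e)\Rightarrow(f)$ is immediate, and finish with the one genuinely hard implication $(f)\Rightarrow(c)$.

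For $(a)\Leftrightarrow(b)$ I would use the standard dictionary between a Frobenius structure and a nondegenerate invariant pairing: set $p=\ep\,m$ and $q=\Delta\,u$. Associativity of $m$ gives the invariance $p(m\otimes\id_A)=p(\id_A\otimes m)$, while the Frobenius compatibility together with the (co)unit axioms yields the two snake identities; conversely one recovers $\ep=p(\id_A\otimes u)$ and $\Delta=(m\otimes\id_A)(\id_A\otimes q)=(\id_A\otimes m)(q\otimes\id_A)$ and checks the Frobenius axioms. For $(b)\Leftrightarrow(c)$ and $(b)\Leftrightarrow(d)$ I would pass $p$ through the rigidity adjunctions: the mate of $p$ under $A\dashv A^*$ is $\Phi_r:A\to A^*$, invariance of $p$ is exactly the statement that $\Phi_r$ is a right $A$-module map for the action in (d), and the existence of a copairing $q$ satisfying the snakes is exactly nondegeneracy, i.e. that $\Phi_r$ is invertible with inverse assembled from $q$. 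The left-handed version gives $\Phi_l:A\to\leftdual A$ and (c).

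Next, $(b)\Rightarrow(e)$ is a short computation from the snake relations. Put $\nu:=p(\id_A\otimes u)=\ep$. If $(I,\rho_I,\phi_I)$ is a right weak ideal factoring through $\ker(\nu)$, then $\nu\,\phi_I=0$, so the weak-ideal identity $\phi_I\rho_I=m(\phi_I\otimes\id_A)$ gives $p(\phi_I\otimes\id_A)=\nu\,m(\phi_I\otimes\id_A)=\nu\,\phi_I\rho_I=0$; feeding this into the snake $(p\otimes\id_A)(\id_A\otimes q)=\id_A$ collapses $\phi_I$ to $0$. The left-ideal case is symmetric, using the other snake and $p(\id_A\otimes\phi_I)=0$. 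Since every ideal is in particular a weak ideal, $(e)\Rightarrow(f)$ holds with the same $\nu$.

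The crux is $(f)\Rightarrow(c)$, and the main obstacle is the categorical replacement for ``an injective endomorphism of a finite-dimensional space is bijective.'' Starting from the form $\nu$ of (f), set $p:=\nu\,m$ and form $\Phi_r,\Phi_l$ as above. Invariance of $p$ makes these module maps, and $\ker(\Phi_r)\hookrightarrow A$ is a right ideal which factors through $\ker(\nu)$, since $\nu=p(\id_A\otimes u)$ factors through $\Phi_r$; by (f) its inclusion is zero, so $\Phi_r$ is monic, and symmetrically $\Phi_l$ is monic. To upgrade monic to iso I would apply the left-dual functor $\leftdual(-)$, which is exact and contravariant because $\C$ is rigid: it sends the monomorphism $\Phi_r$ to an epimorphism, which one identifies with $\Phi_l$, both being mates of the single pairing $p$, up to the canonical isomorphism $\leftdual(A^*)\cong A$. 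Thus $\Phi_l$ is simultaneously monic and epic, hence an isomorphism in the abelian category $\C$, giving (c) and closing the chain. I expect verifying the identification $\leftdual(\Phi_r)\cong\Phi_l$ and tracking the duality coherences to be the fussiest part; everything else is bookkeeping with the snake relations and the universal properties of kernels and cokernels.
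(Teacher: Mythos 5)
Your proposal is correct and follows essentially the same route as the paper: the identical cycle $(a)\Leftrightarrow(b)\Leftrightarrow(c)\Leftrightarrow(d)$ (via Fuchs--Stigner), then $(b)\Rightarrow(e)\Rightarrow(f)\Rightarrow(c)$, with the same Frobenius form (your $p(\id_A\otimes u)$ coincides with the paper's $p(u\otimes\id_A)$) and the same snake-relation collapse for $(b)\Rightarrow(e)$. In $(f)\Rightarrow(c)$ the paper handles surjectivity by showing $\coker(\Phi_l)=0$ via the observation that $C^*=\ker(\Phi_l^*)$ is a right ideal killed by $\nu$, which---since $\Phi_l^*$ is precisely your $\Phi_r$---is the same computation as your ``$\Phi_r$ is monic, then dualize'' step.
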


\begin{definition} \label{def:Frobform}
 In the theorem above, we refer to the map $p$ in part~(b) as a {\it nondegenerate pairing}  of $A$ with {\it copairing} $q$. We also refer to the map  $\nu$ in part (e) (resp., part (f)) as a {\it weak Frobenius form} (resp., a {\it  Frobenius form}) on $A$.
\end{definition}

\begin{proof}[Proof of Theorem~\ref{thm:defFrob}]
The equivalence of (a) and (b) is well-known; see, e.g., \cite[Proposition~8]{FS}. The equivalence of (b) and (c) holds by \cite[Proposition~9]{FS}, and the equivalence of (c) and (d) follows from \cite[Lemma~5]{FS}.

\smallskip

Next, we show that (b) implies (e). With the pairing $p:A \otimes A \to \unit$ in part (b), define $\nu:=p(u \otimes \id_A): A \to \unit$. Now part (e) for left weak ideals $(I, \lambda_I, \phi_I)$ of $A$ holds by the following commutative diagram.
\[
\xymatrix@R=1.7pc@C=10pc{
A \otimes \ker(\nu) \ar@{}[ddr]^(.25){(2)} \ar[r]^{\id_A \otimes \iota} \ar@/^2pc/[rr]^{0} & A \otimes A \ar@{}[rd]|(.3){(4)}  \ar[r]^{\id_A \otimes \nu} & A \otimes \one\\
A \otimes I \ar@{}[r]|(.5){(3)} \ar[u]^{\id_A \otimes \overline{\phi}_I} \ar[ru]_{\id_A \otimes \phi_I}&  A \otimes A \otimes A \ar@{}[r]|(.4){(5)} \ar@{}[u]|(1.35){(1)} \ar[u]^{\id_A \otimes m} \ar[ru]_{\id_A \otimes p} &\\
A \otimes A \otimes I \ar@{}[r]|(.5){(6)} \ar[u]^{\id_A \otimes \lambda_I} \ar[ru]_{\id_A \otimes \id_A \otimes \phi_I}& A \ar[u]^{q \otimes \id_A} \ar@/_1pc/[ruu]_{\id_A}&\\
I \ar[u]^{ q \otimes \id_A} \ar[ru]_{\phi_I}& &
}
\]

Here, $\iota$ is the natural inclusion map, so (1) commutes by the definition of a kernel. By the hypothesis in part (e) on the left weak ideal  $(I, \phi_I)$, there exists a morphism $\overline{\phi}_I: I \to \ker(\nu)$ so that (2) commutes. The diagram (3) commutes as $I \in {}_A \C$, where $\lambda_A = m$. Diagram (4) commutes because $ p=  \nu  m: A \otimes A \to \one$ via the unit axiom. Now by part (b), there exists a morphism $q: \one \to A \otimes A$ in $\C$ so that (5) commutes. Moreover, the diagram (6) clearly commutes. Using this, we conclude that the the outer diagram commutes, and thus $\phi_I = 0$, as desired.

Likewise, (b) implies (e) for right  weak ideals by using a similar commutative diagram  with the hypothesis that $(p \otimes \id_A)(\id_A \otimes q) = \id_A$, for $p:= \nu  m$.

\smallskip

Next, (e) clearly implies (f).

\smallskip

Finally, we verify that  (f) implies (c). 
As in \cite{FS} take 
$$\Phi_l:=(\id_A \otimes  \nu  m)(\coev'_A \otimes \id_A): A \to \leftdual A.$$
In fact, $\Phi_l \in {}_A \C$, due to the following computation:
\[
\begin{array}{rl}
\smallskip
\lambda_{\leftdual A}(\id_A \otimes \Phi_l) \hspace{-.1in}
&= [\id_{\leftdual A} \otimes \nu m (\ev'_A \otimes \id_{A \otimes A})(\id_{A} \otimes \coev'_A \otimes \id_A)(m \otimes \id_A)](\coev'_A \otimes \id_{A \otimes A})\\
\smallskip
&= [\id_{\leftdual A} \otimes  \nu m (m \otimes \id_A)](\coev'_A \otimes \id_{A \otimes A})\\
\smallskip
&= [\id_{\leftdual A} \otimes  \nu m (\id_A \otimes m)](\coev'_A \otimes \id_{A \otimes A})\\
\smallskip
&= (\id_{\leftdual A} \otimes \nu m)(\coev'_A \otimes \id_{A}) m\\
&= \Phi_l \lambda_A.
\end{array}
\] 
\noindent Here, the first and fourth equation hold by commutativity of maps; the second equation holds by a rigidity axiom; and the third equation holds since $m$ is associative.
Set the notation
$$K:=\ker(\Phi_l) \quad \quad \text{and} \quad \quad C:=\coker(\Phi_l),$$
and it suffices to show that $K=0$ and $C = 0$. 

To get that $K=0$, we will  show that the mono $k: K \hookrightarrow A$ attached to $K$ is the zero morphism. To proceed, define $\lambda_K: A\otimes K \rightarrow K$ using the universal property of kernels as follows:
\[
\xymatrix@R=1.5pc@C=5pc{
A \otimes K \ar[r]^{\id_A \otimes k} \ar@{..>}[d]^{\lambda_K} & A \otimes A \ar[r]^{\id_A \otimes \Phi_l} \ar[d]^{\lambda_A=m} & A \otimes \leftdual A \ar[d]^{\lambda_{\leftdual A}}\\
 K \ar[r]^k & A \ar[r]^{\Phi_l} &  \leftdual A.
 }
\]
Here, the right square commutes due to $\Phi_l \in {}_A \C$, and the left square commutes due to the definition of a kernel.  Furthermore, 
\[
\begin{array}{rl}

\smallskip

k \; \lambda_K \; (m\otimes \text{id}_A) & = m \; (\text{id}_A \otimes k) \; (m \otimes \text{id}_A)  \\

\smallskip

& = m \; (m\otimes \text{id}_A) (\text{id}_A \otimes \text{id}_A \otimes k)\\

\smallskip

& = m \; ( \text{id}_A 
 \otimes m)(\text{id}_A \otimes \text{id}_A \otimes k) \\

\smallskip

& = m\; (\text{id}_A \otimes k) (\text{id}_A \otimes \lambda_K) \\

\smallskip

& = k \;  \lambda_K\; (\text{id}_A \otimes \lambda_K).

\end{array}
\]
\noindent
The first, fourth and fifth equations hold by definition of $\lambda_K$; the third equation holds by associativity of $m$, and the second equation holds by commutativity of maps.
Since $k$ is a mono, $\lambda_K (\text{id}_A \otimes \lambda_K) = \lambda_K (m\otimes \text{id}_A)$. Similarly, one can show that $\lambda_K (u\otimes \text{id}_K) = \text{id}_K $. Thus, $(K,k,\lambda_K)$ is a  left ideal of $A$. Moreover, we get 
\[
\begin{array}{rl}

\smallskip

0 &=\ev'_A(u \otimes \id_{\leftdual A})\; \Phi_l \; k\\

\smallskip

 &= \ev'_A(u \otimes \id_{\leftdual A})(\id_A \otimes \nu  m)(\coev'_A \otimes \id_A) k\\
 
\smallskip

  &= \nu m(\ev_{A}' \otimes \id_A \otimes \id_A)(\id_A \otimes \coev_{A}' \otimes \id_A)(u \otimes \id_A) k\\
  
\smallskip

  &= \nu m(u \otimes \id_A) k\\
  
  &= \nu  k.
 \end{array}
\]
Here, the first equation holds because $\Phi_l  k = 0$; the second equation follows from the definition of $\Phi_l$; the third equation holds by commutativity of maps; the fourth equation follows from rigidity; and the last equation holds by unitality.
So, $ \nu k: K \to \one$ is a zero morphism, which implies that $k$ factors through $\ker(\nu)$. Thus, by part (f), $k$ is the zero morphism. 

To obtain that $C=0$, consider the natural epi $c: \leftdual A \twoheadrightarrow C $, along with its monic dual, $c^*: C^* \hookrightarrow (\leftdual A)^*$. In particular, $(\leftdual A)^* = A$, and it is straightforward to show $C^* = \ker(\Phi_l^*)$. Then, by an argument similar to showing that $(K,k,\lambda_K)$ is a left  ideal of $A$ above, we obtain that $(C^*, c^*, \rho_{C^*})$ is a right  ideal of $A$. Here, the right $A$-module map $\rho_{C^*}$ is induced by the map $\rho_{A^*}$ given the statement of part (d). 
Moreover, by using the rigidity and the unit axioms, we also obtain from $\Phi_l^* c^* = 0$ that $\nu c^* = 0$. So, by part (f), we conclude that $c^* = 0$. Thus, $C^* = 0$, and hence, $C=0$, as desired.
\end{proof}

%%%%%%%%%%%%%%%%%%%%%%%%%%%%%%%%%%%%%%%%%%%%%%%%%%%%%%%%%%%%%%%%%%%%%
%%%%%%%%%%%%%%%%%%%%%%%%%%%%%%%%%%%%%%%%%%%%%%%%%%%%%%%%%%%%%%%%%%%%%
%%%%%%%%%%%%%%%%%%%%%%%%%%%%%%%%%%%%%%%%%%%%%%%%%%%%%%%%%%%%%%%%%%%%%

\section{Filtered Frobenius algebras in rigid monoidal categories} \label{sec:main}
We now present the main result of the paper on filtered Frobenius algebras in rigid monoidal categories [Theorem~\ref{thm:main}]; see Section~\ref{sec:thmmain}. First, we discuss  preliminary results in Section~\ref{sec:connected} on Frobenius forms of certain graded algebras that are Frobenius. We end by presenting questions for further investigation in Section~\ref{sec:questions}. Let $\C$ be an abelian, rigid monoidal category throughout, which is strict with $\otimes$ biexact by Hypothesis~\ref{hyp:abelian} and \ref{hyp:strict}.

\subsection{Frobenius graded algebras} \label{sec:connected} 

Consider the following terminology.

\begin{definition} \label{def:con}
\begin{enumerate}
    \item A graded algebra $B = \coprod_{i \in \mathbb{N}_0} B_i$  in $\C$ is {\it connected} if $B_0 = \unit$.
    \smallskip
    \item A filtered algebra $(B, F_B)$ in $\C$ is called {\it connected} if $F_B(0) = \unit$.
\end{enumerate}
\end{definition}

\begin{remark}
It is straight-forward to see that the associated graded algebra [Definition~\ref{def:assgrstr}] of a connected filtered algebra in $\C$ is a connected graded algebra in $\C$.
\end{remark}

Next, we have a preliminary result on the structure of connected graded algebras that are Frobenius.

\begin{lemma} \label{lem:Frobgr}
Take a connected graded algebra $B = \coprod_{i = 0}^n B_i$ in $\C$ that is Frobenius. Then the following statements hold.
\begin{enumerate} [(a),font=\upshape]
    \item $B_n = B_0^* = \unit$.
    \item $\ep: B \to \unit$ defined as the composition
    $$\ep: B \longrightarrow \textstyle B/ \coprod_{i = 0}^{n-1} B_i \overset{\sim}{ \longrightarrow} B_n = \unit$$ is a (weak) Frobenius form on $B$.
    \item $B_{n-i} = B_{i}^*$ for $0\leq i \leq n$. 
\end{enumerate}
\end{lemma}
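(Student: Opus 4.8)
The plan is to prove the three statements in the order (a), (b), (c), since each naturally feeds the next. Throughout, I would exploit the fact that $B$ is a finite connected graded Frobenius algebra in $\C$, so by Theorem~\ref{thm:defFrob} it carries a nondegenerate pairing $p: B \otimes B \to \unit$ with copairing $q$. The key structural input is that $p$ is a morphism of \emph{graded} objects into $\unit$ (placed in degree $0$), so $p$ restricted to $B_i \otimes B_j$ must vanish unless $i + j = n$, where $B = \coprod_{i=0}^n B_i$. This single grading observation is what forces all the duality statements.

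For part (a), I would argue that nondegeneracy of $p$ pairs the top degree $B_n$ perfectly against the bottom degree $B_0 = \unit$. Concretely, since $p|_{B_i \otimes B_j} = 0$ whenever $i+j \neq n$, the copairing $q$ must land in $\coprod_{i+j=n} B_i \otimes B_j$; combining the nondegeneracy axioms $(p \otimes \id)(\id \otimes q) = \id = (\id \otimes p)(q \otimes \id)$ with this grading constraint yields that the restricted pairing $B_0 \otimes B_n \to \unit$ (equivalently $B_n \otimes B_0 \to \unit$) is itself nondegenerate. Because $B_0 = \unit$ and $\unit$ is its own dual, nondegeneracy of a pairing $\unit \otimes B_n \to \unit$ exhibits $B_n \cong \unit^* = \unit = B_0^*$, giving (a).

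Part (b) should then follow almost formally. Having identified $B_n \cong \unit$ via (a), the composite $\ep: B \twoheadrightarrow B/\coprod_{i=0}^{n-1} B_i \overset{\sim}{\to} B_n = \unit$ is well-defined. To see it is a weak Frobenius form in the sense of Theorem~\ref{thm:defFrob}(e), I would take any left or right weak ideal $(I, \lambda_I, \phi_I)$ factoring through $\ker(\ep) = \coprod_{i=0}^{n-1} B_i$ and show $\phi_I = 0$. The idea is that $\nu := \ep$ and $p := \ep\, m$ recover the Frobenius pairing (up to the top-degree projection), so an ideal living below the top degree is annihilated by the nondegenerate pairing, forcing $\phi_I = 0$; invoking the implication (f)$\Rightarrow$(c) packaging already established in Theorem~\ref{thm:defFrob} makes this clean. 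Since a Frobenius form is in particular a weak Frobenius form, this settles (b).

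Part (c) is the refinement and is where I expect the real work to be. The statement $B_{n-i} \cong B_i^*$ for all $i$ upgrades the top/bottom duality of (a) to a duality between every pair of complementary graded pieces. The approach is again to use that $p|_{B_i \otimes B_{n-i}}: B_i \otimes B_{n-i} \to \unit$ is the only nonzero restriction pairing $B_i$ against something, and to show this component pairing is itself nondegenerate. The obstacle is that nondegeneracy of the \emph{total} pairing $p$ does not immediately decompose into nondegeneracy of each graded component; I would need to check that the copairing $q$ decomposes compatibly, i.e.\ that its degree-$(i, n-i)$ and degree-$(n-i,i)$ components serve as the copairings witnessing nondegeneracy of $p|_{B_i \otimes B_{n-i}}$. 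Establishing this requires tracking the grading through both zig-zag identities and confirming that the ``off-diagonal'' contributions (where the degrees do not sum correctly) cannot interfere — a careful bookkeeping argument using that $\C$ is rigid so that a nondegenerate pairing $B_i \otimes B_{n-i} \to \unit$ is equivalent to an isomorphism $B_{n-i} \cong B_i^*$. This component-wise decomposition of nondegeneracy is the main technical hurdle, and I would want to phrase it as: the total copairing $q$ being a graded morphism $\unit \to B \otimes B$ forces it to split as $\sum_i q_i$ with $q_i: \unit \to B_i \otimes B_{n-i}$, after which each $(p|_{B_i \otimes B_{n-i}}, q_i)$ satisfies the rigidity duality axioms and hence $B_{n-i} \cong B_i^*$.
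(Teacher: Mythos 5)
Your overall architecture --- proving (a), then (b), then (c), with (c) resting on a component-wise decomposition of a nondegenerate pairing --- matches the paper's, and you correctly identify the decomposition of nondegeneracy into graded components as the crux of (c). But the ``key structural input'' on which everything rests is a genuine gap: you assert that the pairing $p$ and copairing $q$ supplied by Theorem~\ref{thm:defFrob} vanish off total degree $n$. The lemma only assumes that $B$ is a graded algebra admitting \emph{some} Frobenius structure; the resulting Frobenius form is an arbitrary morphism $\coprod_i B_i \to \unit$ and need not kill the lower-degree components. Already for $B = \kk[x]/(x^2)$ with $x$ in degree $1$ (so $n=1$) and Frobenius form $\ep'(a+bx) = a+b$, one has $p(1 \otimes 1) = 1 \neq 0$ although $0+0 \neq n$, and the copairing $q = 1 \otimes x + x \otimes 1 - x \otimes x$ has a component of total degree $2 > n$. (Note also that if $p$ really were a graded map to $\unit$ placed in degree $0$, it would have to vanish unless $i+j=0$, not $i+j=n$.) This false premise makes your proof of (b) circular --- you justify that $\ep$ is a weak Frobenius form by asserting that $\ep\, m$ essentially recovers the given nondegenerate pairing, which is exactly what fails for a non-graded form --- and it leaves (c) without the decomposition of $q$ that you flag as the main hurdle.

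The paper's proof avoids this as follows. In (a) the only grading input is that $m$ is graded: writing $a_i$, $f_i$ for the inclusions and projections of the coproduct, $m(a_n \otimes a_i)$ lands in $B_{n+i} = 0$ for $i>0$, so $p(a_n \otimes a_i) = \ep' m(a_n \otimes a_i) = 0$ automatically, whatever $\ep'$ is; inserting $\id_B = \coprod_j a_j f_j$ into the zig-zag identities then shows that only the components $p(a_n \otimes a_0)$ and $(f_0 \otimes f_n)q$ survive and that these alone satisfy the duality axioms --- no claim that $q$ is concentrated in degree $n$ is needed. Part (b) is then proved by a direct argument that no nonzero left weak ideal factors through $\coprod_{i<n} B_i$ (comparing the $B_n$-components of $m(\id_B \otimes \phi_I)$ and of $\phi_I \lambda_I$), not by appealing to nondegeneracy of $\ep\, m$. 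Only in (c), with the form $\ep$ from (b) in hand --- which \emph{is} supported in top degree by construction, so that $p' = \ep\, m$ genuinely vanishes on $B_i \otimes B_j$ for $i+j \neq n$ --- does the component-wise argument of (a) extend to all $i$. If you reorganize so that (a) uses only gradedness of $m$, (b) is argued directly, and (c) uses $\ep$ rather than the original form, your plan goes through.
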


\begin{proof}
(a) Since the algebra $(B,m,u)$ is Frobenius, we have a Frobenius form $\ep': B \to \unit$ so that $p = \ep'm: B \otimes B \to \unit$ is a nondegenerate pairing on $B$ with copairing $q: \unit \to B \otimes B$ [Theorem~\ref{thm:defFrob}, Definition~\ref{def:Frobform}]. Now let 
$$a_i: B_i \to B \qquad \text{and} \qquad f_i:B \to B_i$$
be the natural inclusion and projection maps from the decomposition $B = \coprod_{i = 0}^{n} B_i$. Namely, 
\begin{equation} \label{eq:coprod-decomp}
\textstyle \coprod_{i=0}^n a_i f_i = \id_B \qquad \text{and} \qquad f_i a_j = \delta_{i,j} \id_{B_i}.
\end{equation}
Next, consider the following computation:
$$
\textstyle \coprod_{i,j} (\ep'm \otimes \id_{B_n})(a_n \otimes a_i \otimes f_n a_j) (\id_{B_n} \otimes f_i \otimes f_j) (\id_{B_n} \otimes q) 
 \overset{\textnormal{\eqref{eq:coprod-decomp}}}{=} 
(p \otimes f_n) (a_n \otimes q) 
= f_na_n 
 \overset{\textnormal{\eqref{eq:coprod-decomp}}}{=} \id_{B_n}.
$$
Since $f_n a_j = \delta_{n,j} \id_{B_n}$, we must have that $j = n$ in the equation above. On the other hand, $m$ is a graded algebra map. Thus, $\text{im}(m(a_n \otimes a_i))$ is a subobject of $B_{n+i}$. Since, $B_k=0$ for $k>n$, we must have that $i =0$ in the equation above. Thus,
$$
[(p \otimes \id_{B_n})(a_n \otimes a_0 \otimes \id_{B_n})][ (\id_B \otimes f_0 \otimes f_n) (\id_{B_n} \otimes q)] 
= \id_{B_n}.
$$
Likewise, we also have that
$$
[(\id_{B_0} \otimes p)(\id_{B_0} \otimes a_n \otimes a_0)][ (f_0 \otimes f_n \otimes \id_{B_0}) (q \otimes \id_{B_0})] 
= \id_{B_0}.
$$
Therefore the maps $p(a_n \otimes a_0):B_n \otimes B_0 \to \unit$ and $(f_0 \otimes f_n)q: \unit \to B_0 \otimes B_n$ give $B_n$ the structure of the left dual $B_0^*$ of $B_0$. So, by the connected assumption on $B$, we then get that
$$B_n = B_0^* = \unit^* = \unit.$$

(b) Note that $\ker(\ep) = \textstyle \coprod_{i=0}^{n-1} B_i$. So to show that $\ep$ is a weak Frobenius form on $B$, we must verify that $\textstyle \coprod_{i=0}^{n-1} B_i$ does not have a nonzero left weak ideal of $B$. By way of contradiction, suppose that $(I, \phi_I)$ is a nonzero left weak ideal of $B$ so that there is a map $\overline{\phi}_I: I \to \ker(\ep)$ with $\phi_I = \iota \;\overline{\phi}_I$; here, $\iota$ the natural mono from $\ker(\ep)$ to $B$. Then, by the definition of left weak ideals, for the left $B$-action maps $\lambda_I: B \otimes I \to I$ and $\lambda_B = m: B \otimes B \to B$, we get that $m(\id_B \otimes \phi_I) = \phi_I \lambda_I$. So, $m(\id_B \otimes \phi_I) = \iota\; \overline{\phi}_I\; \lambda_I$. On one hand,  $m$ is a graded map, so we must have that the image of $m(\id_B \otimes \phi_I)$ has $B_n$ as a component. On the other hand, the image of $\iota\; \overline{\phi}_I\; \lambda_I$ does not have $B_n$ as a component, which yields a contradiction. Hence, $\ker(\ep)$ does not have a nonzero left weak ideal of $B$. Thus, with part (a) we obtain that $\ep: B \to \textstyle  B/\coprod_{i=0}^{n-1} B_i \cong B_n = \unit$ is a weak Frobenius form on $B$.

\smallskip

(c) By part (b), $B$ is Frobenius with weak Frobenius form $\ep$. Hence $p' = \ep m: B \otimes B \rightarrow \unit$ is a nondegenerate pairing on $B$ for some copairing $q':\unit \rightarrow B\otimes B$. Now using the same argument as in part (a), we can show that 
\[ p'(a_{n-i} \otimes  a_i): B_{n-i}\otimes B_i \rightarrow \unit \hspace{.5cm} \text{and} \hspace{.5cm} (f_i \otimes f_{n-i})q': \unit \rightarrow B_{i} \otimes B_{n-i},  \] 
give $B_{n-i}$ the structure of the left dual $B_{i}^*$ of $B_{i}$.
\end{proof}

\subsection{Main result} \label{sec:thmmain}
This brings us to the main result of the article.

\begin{theorem} \label{thm:main} Take $A$ to be a connected filtered algebra in $\C$ equipped with a finite monic filtration.
If $\gr(A)$ is a Frobenius algebra in $\C$, then so is~$A$. 
\end{theorem}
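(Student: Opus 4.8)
The plan is to deduce the Frobenius property of $A$ from the intrinsic characterization in Theorem~\ref{thm:defFrob}, specifically condition~(f): it suffices to produce a morphism $\nu\colon A\to\unit$ such that every left or right ideal $(I,\lambda_I,\phi_I)$ of $A$ whose structure map $\phi_I$ factors through $\ker(\nu)$ must have $\phi_I=0$. The candidate for $\nu$ comes from the graded side. Since $A$ is connected with a finite monic filtration, $\gr(A)=\coprod_{i=0}^{n}\overline{F_A(i)}$ is a connected, finite, graded Frobenius algebra, so Lemma~\ref{lem:Frobgr} applies: part~(a) gives $\overline{F_A(n)}=\unit$, and part~(b) shows that the top-degree projection $\ep\colon \gr(A)\to \overline{F_A(n)}=\unit$ is a weak Frobenius form, i.e.\ it satisfies condition~(e) of Theorem~\ref{thm:defFrob}. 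I would then set $\nu:=\pi^A_n\colon A=F_A(n)\to\overline{F_A(n)}=\unit$, the cokernel of $\iota^A_{n-1}$; since the filtration is monic, $\ker(\nu)=F_A(n-1)$.

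First I would take a left ideal $(I,\lambda_I,\phi_I)$ of $A$ with $\phi_I$ factoring through $\ker(\nu)=F_A(n-1)$, and endow $I$ with the induced filtration via $\phi_I$ as in Lemma~\ref{lem:induced}, so that by Proposition~\ref{prop:idealFil} it becomes a filtered left ideal of $A$ and, by Corollary~\ref{cor:gr-pres}, $\gr(I)$ is a graded left weak ideal of $\gr(A)$ with structure map $\gr(\phi_I)$. A crucial observation here is that, because the induced filtration is built from pullback squares $F_I(i-1)=F_I(i)\times_{F_A(i)}F_A(i-1)$ in which all $F_{\phi_I}(i)$ are monic, the comparison maps on cokernels $\gr(\phi_I)_i$ are again monic; hence $\gr(\phi_I)$ is a monomorphism in ${\sf Gr}(\C)$.

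Next I would verify that $\gr(\phi_I)$ factors through $\ker(\ep)=\coprod_{i=0}^{n-1}\overline{F_A(i)}$. Since $\phi_I$ factors through $F_A(n-1)=\ker(\pi^A_n)$, we have $\pi^A_n\,\phi_I=0$; and because $\gr(\phi_I)_n\,\pi^I_n=\pi^A_n\,F_{\phi_I}(n)=\pi^A_n\,\phi_I=0$ with $\pi^I_n$ epic, the top component $\gr(\phi_I)_n$ vanishes, so $\gr(\phi_I)$ indeed lands in $\ker(\ep)$. As $\ep$ is a weak Frobenius form, Theorem~\ref{thm:defFrob}(e) forces $\gr(\phi_I)=0$. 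Combined with the monicity established above, this gives $\gr(I)=0$, i.e.\ $\overline{F_I(i)}=0$ for all $i$; since the filtration on $I$ is monic and finite, each $\iota^I_i$ is both monic and epic, hence an isomorphism, and $F_I(0)=\overline{F_I(0)}=0$ forces $I=F_I(n)=0$ and $\phi_I=0$. The right-ideal case is entirely symmetric, so $\nu$ is a Frobenius form on $A$ and $A$ is Frobenius by Theorem~\ref{thm:defFrob}.

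The step I expect to be the main obstacle is the passage from $\gr(\phi_I)=0$ back to $\phi_I=0$: in general $\gr$ does not reflect zero morphisms, so this genuinely relies on choosing the \emph{induced} (pullback) filtration on $I$ in order to guarantee that $\gr(\phi_I)$ is a monomorphism, after which the descent from $\gr(I)=0$ to $I=0$ uses finiteness and monicity of the filtration. Consequently, the one place requiring a short homological argument is the claim that the cokernel comparison map of a pullback square of monomorphisms is again monic; everything else reduces to bookkeeping with the universal properties already set up in Sections~\ref{sec:assgr} and~\ref{sec:newDefn}.
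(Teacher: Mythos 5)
Your proposal is correct and follows essentially the same route as the paper's proof: the same Frobenius form $\nu=\pi^A_n\colon A\to\overline{F_A(n)}=\unit$, the same use of Lemma~\ref{lem:Frobgr} and of the pullback-induced filtration from Lemma~\ref{lem:induced} and Proposition~\ref{prop:idealFil}, the same reduction to $\gr(\phi_I)=0$ via Theorem~\ref{thm:defFrob}, and the same key monicity claim for the components $\gr(\phi_I)_i$, which the paper establishes by exactly the diagram chase you anticipate (following \cite[Lemma~4.3.2]{PP}). The only differences are cosmetic bookkeeping: you kill the top component of $\gr(\phi_I)$ directly rather than first showing $\iota^I_{n-1}$ is epic, and you replace the paper's induction on $F_I(i)$ by the observation that each $\iota^I_i$ is a monic epi, hence an isomorphism.
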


\begin{proof}
Since the filtration on $A$ is finite, $A \cong F_A(n)$ for some $n \in \mathbb{N}_0$.
Recall Notation~\ref{not:filt} and consider the composite  morphism $$\eta: A \overset{\sim}{\longrightarrow} F_A(n) \overset{\pi_n^A}{\longrightarrow} \overline{F_A(n)}.$$ 
Since $\gr(A)$ is a graded algebra in $\C$ by Theorem~\ref{thm:gr}, and is Frobenius by assumption, we get by Lemma \ref{lem:Frobgr}(a) that $\overline{F_A(n)} = \unit$. 
Let $(I,\lambda,\phi)$ be a left ideal of $A$, so that $\phi$ factors through $\ker(\eta) =F_A(n-1)$. Then, by Theorem~\ref{thm:defFrob}, it suffices to show that $\phi$ is a zero morphism. Indeed, this would show that $\eta$ is a Frobenius form for $A$.

Since $I$ factors through ker$(\eta)$, we have a map $\overline{\phi} : I \rightarrow F_A(n-1) $ such that $\iota^A_{n-1} \; \overline{\phi} = \phi$. 
%Since, $\phi$ is a mono, \red{$\iota^A_{n-1}$ must also be a mono} \red{[CW: This isn't true]}
By Proposition~\ref{prop:idealFil}, we can endow $I$ with a filtration $F_I$ making it a filtered left ideal of $A$. Recall from Lemma~\ref{lem:induced} that the filtration $F_I$ is defined using pullbacks. Consider the following diagram: 
%Consider the following diagram for any object $X$ with morphisms $g:X\rightarrow F_I(n)$, $h:X\rightarrow F_A(n-1)$ such that $\phi\; g = \iota^A_{n-1}\; h$.
\[
\xymatrix{
F_I(n) \ar@/^1pc/[rrd]^{\text{id}} \ar@/_2pc/[rdd]_{\overline{\phi}} \ar@{.>}[rd]^{\theta} \ar@{}[rrd]|(0.5){(2)} \ar@{}[rdd]|{(3)} & &\\
& F_I(n-1) \ar@{}[rd]|(0.5){(1)} \ar[d]_{F_{\phi}(n-1)}  \ar[r]^{\iota^I_{n-1}} & F_I(n) \ar[d]_{\phi} \\
& F_A(n-1) \ar[r]_{\quad \iota^A_{n-1}} & F_A(n).  
}
\]
Here, square $(1)$ commutes by definition of $F_I(n-1)$  (see the diagram in the proof of  Lemma~\ref{lem:induced}). Since $\iota^A_{n-1} \; \overline{\phi} = \phi \; \text{id}$, by universal property of pullbacks, we get a morphism $\theta: F_I(n) \rightarrow F_I(n-1)$ such that $(2)$ and $(3)$ commute. Thus, we get that $\iota^I_{n-1} \; \theta = \text{id}$. Hence, $\iota^I_{n-1}$ must be an epimorphism. Therefore, by \eqref{eq:gr-comp}, we have that $\gr(I)_n=0$. Thus,
the morphism $\gr(\phi): \gr(I)\rightarrow \gr(A) $ factors through $\oplus_{i=0}^{n-1} \overline{F_A(i)}$.

By Lemma \ref{lem:Frobgr}(b), we know that $\gr(A)$ is Frobenius with weak Frobenius form $\varepsilon$ and $\text{ker}(\varepsilon)=\oplus_{i=0}^{n-1} \overline{F_A(i)}$. 
Furthermore, by Proposition~\ref{prop:mon-preserve}(c), we know that $\gr(I)$ is a left weak ideal of $\gr(A)$.
Since the map gr($\phi$) from $\gr(I)$ to $\gr(A)$ factors through $\text{ker}(\varepsilon)$, by Theorem~\ref{thm:defFrob} we get that $\gr(\phi)=0$.

We claim that $\gr(\phi)_i$  is monic; this is verified in \cite[Lemma~4.3.2]{PP}, but we include the details for the reader's convenience. To proceed, consider the commutative diagram \eqref{eq:gr-comp} corresponding to the morphism $\phi$ as pictured below; recall we assume that $\iota_{j}^A$ is monic for all $j \in \mathbb{N}_0$. Moreover, consider the kernel $(K,k)$ of $\gr(\phi)_i$, and let $P$ be the pullback of $k$ and $\pi_i^I$, given by $\alpha: P \to K$ and $\beta: P \to F_I(i)$. Note that $\pi_i^A \; F_\phi(i) \; \beta = \gr(\phi)_i \;  \pi_i^I \; \beta = 0$. Since $\ker(\pi_i^A) = \textnormal{im}(\iota_{i-1}^A)$ and $\iota_{i-1}^A$ is monic, there exists a unique map $\gamma: P \to F_A(i-1)$ so that $F_\phi(i) \; \beta = \iota_{i-1}^A \; \gamma$. Since $F_I(i-1)$ is a pullback, there also exists a unique map $\delta: P \to F_I(i-1)$ so that $\iota_{i-1}^I \; \delta = \beta$. 
\[
\xymatrix{
&& P \pullbackcorner \ar[r]^\alpha \ar[d]_{\beta} \ar[dl]_{\delta} \ar@/_4.2pc/[ddl]_{\gamma}&K \ar[d]^{k}\\
&\hspace{.3in}F_I(i-1) \pullbackcorner \ar[r]^(.65){\iota^I_{i-1}} \ar[d]^{F_\phi(i-1)} & F_I(i) \ar[r]^{\pi^I_{i}} \ar[d]^{F_\phi(i)}          & \overline{F_I(i)} \ar[d]^{\gr(\phi)_i = 0} \ar[r] & 0 \\
0 \ar[r] &F_A(i-1) \ar[r]^(.55){\iota^A_{i-1}}                       & F_A(i) \ar[r]^{\pi^A_{i}}    & \overline{F_A(i)} \ar[r] & 0.             
}
\]
Now 
$k \alpha = \pi_i^I \; \beta = \pi_i^I \;\iota_{i-1}^I \;\delta = 0.$
Since $P$ is a pullback and $\pi_i^I$ is epic, the morphism $\alpha$ is epic as well \cite[Corollary~4.2.6]{PP}. As a result, $k = 0$, as required.

Now we show that $F_I(i) = 0$ for all $i$ via induction. Since $\phi = F_\phi(n)$ is monic, and $F_I(n-1)$ is constructed via the pullback of $\phi$ and $\iota_{n-1}^A$, we obtain that $F_\phi(n-1)$ is also monic. %(Indeed,  if $F_\phi(n-1) f = F_\phi(n-1) f'$ for $f , f': X \to F_I(n-1)$, then one can show that $\iota_{n-1}^I f = \iota_{n-1}^I  f'$, and by the universal property of pullbacks, $f = f'$.)
Likewise, $F_\phi(i)$ is monic for all $i$.  Thus, the map $F_\phi(0): F_I(0) \to F_A(0)$ is monic, and is zero as $F_\phi(0) = \text{gr}(\phi)_0=0$. Hence, $F_I(0)=0$. We now assume by induction that $F_I(i-1)=0$. As shown above,  $\gr(\phi)_i$ is a zero monic, so  $\overline{F_I(i)} = 0$. So we can conclude that  $F_I(i) = 0$. 

Therefore, $I=\text{colim}_i F_I(i) = 0$, and thus, $\phi = 0$, as required for $A$ to be Frobenius.
\end{proof}

\subsection{Further directions} 
\label{sec:questions}  We end this section by listing  directions for further investigation.

\smallskip

First, as mentioned in the introduction, Theorem~\ref{thm:main} is a categorical generalization of the main result of Bongale's 1967 work \cite{Bongale1}. In her 1968 work \cite{Bongale2}, Bongale generalized the 1967 result by removing the connected assumption. 

\begin{question} \label{ques:nonconn}
Does Theorem~\ref{thm:main} hold when $A$ is not necessarily connected?
\end{question}

Next, pertaining to the monoidal associated graded functor $\gr:{\sf Fil}(\C) \to {\sf Gr}(\C)$ from Theorem~\ref{thm:gr}, we inquire:

\begin{question}
Does there exist an adjoint to $\gr$ that admits the structure of a Frobenius monoidal functor (see, e.g., \cite{DayPas}), that can be used to obtain Theorem~\ref{thm:main} or more generally, to address Question~\ref{ques:nonconn} in the case when $\C$ is a rigid monoidal category? 
\end{question}

\noindent For instance, an adjoint to an associated graded functor is discussed in \cite{GKR}; their functor is slightly different than our functor $\gr$ in Section~\ref{sec:assgr}.
\smallskip

%In 1968, Bongale also generalized her works \cite{Bongale1, Bongale2} by showing that the quasi-Frobenius property is preserved in a similar manner. This prompts the following question.

%\begin{question}
%\begin{enumerate}
%    \item What is a categorical version of the quasi-Frobenius condition for an algebra in a monoidal category? 
%    \smallskip
    
%    \item Let $A$ be a (connected) filtered algebra in $\C$ equipped with finite filtration. Does $\gr(A)$ being quasi-Frobenius as in part (a) imply that $A$ has this condition as well?
%\end{enumerate}
%\end{question}

Moreover, in connection with the $\C = \Vec$ case discussed in the introduction, consider the following direction.

\begin{remark}
One can also (aim to define and) analyze generalizations of other conditions for algebras in (certain) monoidal categories $\C$, such as the integral domain, prime, Noetherian, and Calabi-Yau conditions, and study when these properties lift to a filtered algebra $A$ in $\C$ from the associated graded algebra $\gr(A)$ in $\C$.
\end{remark}

On the other hand,  Launois and Topley recently obtained a generalization of Bongale's results for {\it Frobenius extensions}, which are $\kk$-algebra extensions $S \subset R$ so that $R$ is a projective left $S$-module and $R \cong {\sf Hom}_S(R,S)$ as $(R,S)$-bimodules \cite{LT}. This recovers the classical definition of a Frobenius algebra when $S = \kk$. So we ask:

\begin{question}
Is there a generalization of \cite[Main Theorem]{LT} for the setting of filtered/graded algebras in monoidal categories as in this work?
\end{question}

Finally, one of the main uses of  Frobenius algebras in monoidal categories $\C$ in the ungraded setting is their role in classifying $\C$-valued 2-dimensional Topological Quantum Field Theories (2d-TQFTs); see, e.g., \cite{kock2004frobenius}. The latter is used to produce invariants in $\C$ for  2-dimensional oriented cobordisms between closed 1-dimensional manifolds. See the following remark about similar connections in graded and filtered  settings.

\begin{remark}  \label{rem:TFT} 
Graded and filtered Frobenius algebras in monoidal categories are used to examine various quantum field theories, as discussed below.
\begin{enumerate}
    \item By work of Turaev \cite[Section~III.3]{TuraevHQFT}, a variant of 2d-TQFTs, namely 2-dimensional Homotopy Quantum Field Theories (2d-HQFTs), are classified in terms of {\it crossed Frobenius algebras} that are graded by a group $G$.  In particular, the cobordisms are equipped with homotopy classes of maps into a certain space $X$, and the grading group $G$ is the fundamental group of $X$. So 2d-HQFTs, and  the $G$-graded Frobenius algebras above, produce invariants of 2-dimensional cobordisms with certain homotopical structure between closed 1-dimensional manifolds.
    \medskip
    \item By work of Lazaroiu \cite{Laz} and of Lauda-Pfeiffer \cite{LaudaPfeiffer}, another important variant of 2d-TQFTs, namely open-closed 2d-TQFTs, are examined by Frobenius algebras in the graded and filtered setting. In this case, the cobordisms are smooth compact oriented 2-dimensional manifolds with corners that model the  topology of {\it open and closed string worldsheets}. Gradings and filtrations are defined on these 2d-TQFTs in order to understand the homology of oriented tangles; see \cite[Proposition~2.16 and Theorem~3.37]{LaudaPfeiffer}. Thus, the grading and filtration on the corresponding Frobenius algebras are important for examining such topological invariants. 
\end{enumerate}

\end{remark}

%%%%%%%%%%%%%%%%%%%%%%%%%%%%%%%%%%%%%%%%%%%%%%%%%%%%%%%%%%%%%%%%%%%%%
%%%%%%%%%%%%%%%%%%%%%%%%%%%%%%%%%%%%%%%%%%%%%%%%%%%%%%%%%%%%%%%%%%%%%
%%%%%%%%%%%%%%%%%%%%%%%%%%%%%%%%%%%%%%%%%%%%%%%%%%%%%%%%%%%%%%%%%%%%%

\section{Application: On module categories over symmetric finite  tensor categories}

In this section, we present an application of our main result, Theorem \ref{thm:main}. Namely, we prove that every exact module category over a symmetric finite tensor category $\C$ is isomorphic to the category of modules over a Frobenius algebra $A$ in $\C$ [Theorem \ref{thm:repByFrob}].
Let $\kk$ be an algebraically closed field of characteristic zero. We refer the reader to \cite[Sections~1.8,~4.1,~7.1,~7.5,~7.8,~8.1]{EGNO} for details about the next categorical structures.

\begin{definition}  \label{def:sym-tensor}Let $(\C,\otimes,\unit, c)$ be a $\kk$-linear, abelian, rigid, braided monoidal category. We say that $\C$ is:
\begin{enumerate}
    \item {\it symmetric} if the braiding satisfies $c_{Y,X} \; c_{X,Y} = \id_{X \otimes Y}$ for all $X,Y \in \C$;
    \smallskip
    \item {\it finite} if it has finite dimensional spaces of morphisms and has finite length objects (i.e., is locally finite), has enough projectives, and has finitely many isomorphism classes of simple objects;
    \smallskip
    \item a {\it finite tensor category} if, further, $\otimes$ is bilinear on morphisms and ${\sf Hom}_\C(\unit,\unit) \cong \kk$;
     \smallskip
    \item a {\it fusion category} if $\C$ is a finite tensor category that is also semisimple. 
\end{enumerate}
\end{definition}

\begin{hypothesis}
From now on, take $\C$ to be a symmetric finite tensor category. 
\end{hypothesis}

\begin{definition} \label{def:mod-category}
A {\it left module category} over $\C$ is a locally finite, abelian category $\mathcal{M}$ equipped with the following:
\begin{itemize}
    \item a bifunctor  $\triangleright:  \C \times \mathcal{M} \to \mathcal{M}$ which is bilinear on morphisms and exact in first slot,
    \item  module associativity constraints  that satisfy the pentagon and triangle axioms. 
\end{itemize} 

Further, $\mathcal{M}$ is said to be {\it exact}, if, for any projective object $P \in \C$ and any object $M \in \mathcal{M}$, the object $P \triangleright M$ is projective in $\mathcal{M}$.
\end{definition}

\begin{example} \label{ex:modcat}
Take an algebra $A$ in $\C$, then one can form a category, $\C_A$, of right $A$-modules in $\C$ consisting of objects $M \in \C$ equipped with a right action morphism $\rho: M \otimes A \to M$ in $\C$. Moreover, $\C_A$ is a left module category over $\C$ via bifunctor $\C \times \C_A \to \C_A$ given by $(X,(M,\rho)) \mapsto (X \otimes M,\; \id_X \otimes \rho)$.  
\end{example}

In fact, the example above classifies all exact module categories over symmetric finite tensor categories, up to equivalence.

\begin{definitionproposition}\cite[Theorem~3.1]{ostrik2003module} \cite[Theorem~3.17]{etingof2003finite} \label{defprop:repAlg}
Every exact module category $\M$ over $\C$ is equivalent to a module category $\C_{A}$ in Example~\ref{ex:modcat}, for some $A \in {\sf Alg}(\C)$. In this case, we say that $\M$ is represented by $A$. \qed
\end{definitionproposition}

Building on this result, we establish the following statement.

\begin{theorem}\label{thm:repByFrob}
Every exact module category over a symmetric finite tensor category $\C$ is represented by a Frobenius algebra in $\C$.
\end{theorem}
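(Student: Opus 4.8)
The plan is to prove the theorem by producing \emph{one} Frobenius representative of $\M$, rather than by showing that every representative is Frobenius. By Definition-Proposition~\ref{defprop:repAlg}, $\M$ is equivalent to $\C_A$ for the specific algebra $A$ constructed by Etingof--Ostrik via internal $\End$ objects \cite[Section~4.2]{etingof2003finite}, so it suffices to prove that \emph{this} $A$ is Frobenius. By Deligne's theorem \cite[Corollary~0.7]{deligne2002categories}, $\C$ is the category of super-representations of a finite supergroup, and in this setting the Etingof--Ostrik representative is assembled, through the tensor product operation and the induction functors, out of an internal endomorphism algebra $E = \underline{\End}(V)$ and a braided Clifford algebra $\textnormal{Cl}_c(W,B)$ in $\C$. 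Thus the proof reduces to three claims: (i) $E$ is Frobenius; (ii) $\textnormal{Cl}_c(W,B)$ is Frobenius; and (iii) the tensor product and the induction functors preserve the Frobenius property. Once all three hold, the assembled algebra $A$ is Frobenius, and $\M \simeq \C_A$ is represented by a Frobenius algebra.

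For claim (i), I would use the fact that the internal endomorphism algebra $E = V \otimes {}^*V$ of an object $V$ in the rigid category $\C$ carries a canonical nondegenerate pairing built from the evaluation and coevaluation maps of Definition~\ref{def:rigid}: one sets $p \colon E \otimes E \to \unit$ to be the contraction coming from $\ev_V$ and $\ev'_V$, with copairing $q$ coming from the coevaluations, and verifies the invariance identity $p(m \otimes \id_E) = p(\id_E \otimes m)$ together with the snake relations $(p \otimes \id_E)(\id_E \otimes q) = \id_E = (\id_E \otimes p)(q \otimes \id_E)$ directly from the rigidity axioms. By Theorem~\ref{thm:defFrob}(b) this makes $E$ a Frobenius algebra; this is the categorical analogue of the classical fact that a matrix algebra is Frobenius with Frobenius form the trace.

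For claim (ii), I would invoke the main theorem. The braided Clifford algebra $\textnormal{Cl}_c(W,B)$ is a connected filtered algebra in $\C$ whose associated graded algebra is the braided exterior algebra $\Lambda_c(W)$; under the finiteness hypothesis on $W$ its filtration is finite and monic, so the hypotheses of Theorem~\ref{thm:main} are satisfied. It therefore remains only to show that $\gr(\textnormal{Cl}_c(W,B)) \cong \Lambda_c(W)$ is Frobenius. Here $\Lambda_c(W)$ is a connected graded algebra concentrated in degrees $0$ through $\dim W$, whose top-degree component is isomorphic to $\unit$, and whose multiplication induces the perfect Poincar\'e-duality pairing $\Lambda^i \otimes \Lambda^{\dim W - i} \to \unit$; this exhibits a Frobenius form in the sense of Lemma~\ref{lem:Frobgr} and Definition~\ref{def:Frobform}. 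Applying Theorem~\ref{thm:main} then yields that $\textnormal{Cl}_c(W,B)$ is Frobenius.

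For claim (iii), in the symmetric category $\C$ the tensor product $A_1 \otimes A_2$ of two Frobenius algebras is again Frobenius: the multiplication is twisted by the braiding $c$, the comultiplication, unit, and counit are the tensor products of those of the factors, and the Frobenius compatibility $(m \otimes \id)(\id \otimes \Delta) = \Delta m = (\id \otimes m)(\Delta \otimes \id)$ follows from the corresponding identities in each factor together with the naturality of $c$. For the induction functors I would verify that they preserve the full Frobenius $5$-tuple $(A,m,u,\Delta,\ep)$, for instance by checking that they are Frobenius monoidal functors in the sense of \cite{DayPas}, so that both the algebra structure (Proposition~\ref{prop:mon-preserve}(a)) and the compatible coalgebra structure are carried along. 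I expect the main obstacle to be claim (ii): confirming that the connectedness, finiteness, and monic hypotheses of Theorem~\ref{thm:main} genuinely hold for $\textnormal{Cl}_c(W,B)$ and that $\gr(\textnormal{Cl}_c(W,B)) \cong \Lambda_c(W)$ is Frobenius in the braided categorical setting, as this is precisely the point at which the associated-graded machinery of Section~\ref{sec:assgr} and the Frobenius-form criterion of Section~\ref{sec:newDefn} must do the real work.
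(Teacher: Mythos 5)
Your proposal follows essentially the same route as the paper: reduce to the Etingof--Ostrik representative $A=(\textnormal{Ind}^G_{\hat H})_W\bigl((\textnormal{Ind}_{\hat H})_W(\textnormal{Ind}^{\hat H}_H(\End(V)))\otimes \textnormal{Cl}_W\bigr)$, show that the endomorphism algebra is Frobenius, deduce that the Clifford algebra is Frobenius by applying Theorem~\ref{thm:main} to its associated graded exterior algebra, and check that the tensor product and the (Frobenius monoidal, in the sense of Day--Pastro) induction functors preserve the Frobenius property. The one point needing care is your claim (i): the object $V$ there is an irreducible representation of a \emph{twisted} group algebra $\kk H_\psi$, so $V$ is not itself an object of $\Rep(H)$ and one cannot literally form $V\otimes{}^*V$ inside $\C$; the paper instead notes that $\End(V)\cong\textnormal{Mat}_n(\kk)$ does carry an $H$-action by conjugation and equips it with the explicit matrix coalgebra structure $\Delta(E_{i,j})=\sum_{k}E_{i,k}\otimes E_{k,j}$, $\varepsilon(E_{i,j})=\delta_{i,j}$ --- your trace-form idea is sound but must be run at the level of $\End(V)$ as an algebra in $\Rep(H)$ rather than via rigidity of $V$ in $\C$.
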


To verify the theorem above, we now recall and establish some preliminary results.

\begin{lemma} \cite[Section~2.4.8]{kock2004frobenius} \label{lem:tens-Frob}
If $C,C' \in {\sf FrobAlg}(\C)$, then $C \otimes C' \in {\sf FrobAlg}(\C)$. 
\qed
\end{lemma}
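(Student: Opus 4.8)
The plan is to realize $C \otimes C'$ as a Frobenius algebra by transporting the nondegenerate pairing/copairing characterization of Theorem~\ref{thm:defFrob}(b) through the symmetry $c$ of $\C$. Since $\C$ is symmetric, $A := C \otimes C'$ carries its standard braided tensor-product algebra structure, with unit $u_A = u_C \otimes u_{C'}$ and multiplication
\[
m_A = (m_C \otimes m_{C'})(\id_C \otimes c_{C',C} \otimes \id_{C'}),
\]
which is associative and unital by naturality of $c$ together with the braid (hexagon) axioms; this is a standard fact about braided tensor products of algebras. Hence $(A, m_A, u_A) \in {\sf Alg}(\C)$, and by Theorem~\ref{thm:defFrob} it suffices to equip $A$ with a nondegenerate pairing.

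Using the pairings $p_C, p_{C'}$ and copairings $q_C, q_{C'}$ supplied by Theorem~\ref{thm:defFrob}(b) applied to $C$ and $C'$, I would define
\[
p_A := (p_C \otimes p_{C'})(\id_C \otimes c_{C',C} \otimes \id_{C'}), \qquad q_A := (\id_C \otimes c_{C,C'} \otimes \id_{C'})(q_C \otimes q_{C'}),
\]
so that $p_A \colon A \otimes A \to \unit$ and $q_A \colon \unit \to A \otimes A$ interleave the two tensor factors via $c$ before (resp.\ after) applying the single-factor (co)pairings. The remaining task is to verify the two conditions of Theorem~\ref{thm:defFrob}(b) for the pair $(p_A, q_A)$.

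For the invariance condition $p_A(m_A \otimes \id_A) = p_A(\id_A \otimes m_A)$, I would expand $m_A$ and slide the interleaving braidings past the multiplications using naturality of $c$, so that the $C$-strands and the $C'$-strands fully separate; the resulting morphism is the reinterleaving of $p_C(m_C \otimes \id_C)$ on the $C$-factor with $p_{C'}(m_{C'} \otimes \id_{C'})$ on the $C'$-factor, and the two single-factor invariance identities close the argument. For the zigzag conditions $(p_A \otimes \id_A)(\id_A \otimes q_A) = \id_A = (\id_A \otimes p_A)(q_A \otimes \id_A)$, the key point is that the symmetry relation $c_{C,C'}\,c_{C',C} = \id_{C \otimes C'}$ (valid since $\C$ is symmetric) makes the braidings introduced by $p_A$ and $q_A$ cancel after sliding them through the remaining strands by naturality; once cancelled, the identity factors as the tensor product of the single-factor zigzag identities $(p_C \otimes \id_C)(\id_C \otimes q_C) = \id_C$ and $(p_{C'} \otimes \id_{C'})(\id_{C'} \otimes q_{C'}) = \id_{C'}$. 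Invoking Theorem~\ref{thm:defFrob}(b)$\Rightarrow$(a) then produces a compatible coalgebra structure on $A$, so $C \otimes C' \in {\sf FrobAlg}(\C)$.

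The main obstacle is entirely the bookkeeping of braidings in these two verifications: one must track which strand is which as the symmetry is pushed through the (co)multiplications. This is routine in a symmetric category precisely because $c$ is natural and satisfies $c_{X,Y}\,c_{Y,X} = \id$, so every braid may be slid past any morphism and oppositely-oriented braids annihilate; a string-diagram rendering reduces both computations to untangling two parallel copies of the single-factor identities. As an alternative organization, one could bypass Theorem~\ref{thm:defFrob} and check directly that $\Delta_A := (\id_C \otimes c_{C,C'} \otimes \id_{C'})(\Delta_C \otimes \Delta_{C'})$ and $\ep_A := \ep_C \otimes \ep_{C'}$ define a coalgebra satisfying the Frobenius compatibility of Definition~\ref{def:algC}(3); this is the same braiding computation presented in a different form.
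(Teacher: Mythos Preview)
Your argument is correct. The paper gives no proof of this lemma at all: it simply cites Kock's book and places a \qed. So there is nothing to compare at the level of technique; you have supplied an actual proof where the paper defers to the literature.

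Your route through Theorem~\ref{thm:defFrob}(b) is a sound choice and fits well with the internal logic of this paper, since it recycles the pairing/copairing characterization already established. The alternative you mention at the end --- directly defining $\Delta_A$ and $\ep_A$ and checking Definition~\ref{def:algC}(c) --- is closer in spirit to the referenced source and avoids invoking rigidity (Theorem~\ref{thm:defFrob} needs $\C$ rigid, which is fine here since $\C$ is a finite tensor category, but is not logically necessary for this lemma). Either way, the computation is the same braiding bookkeeping, and your invocation of symmetry to cancel $c_{C,C'}\,c_{C',C}$ in the zigzag step is exactly what makes the argument clean; in the ambient Section~7 hypothesis $\C$ is symmetric, so this is legitimate.
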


Next, we recall Deligne's classification of symmetric finite tensor categories, and its Hopf-algebraic interpretation  by Andruskiewitsch-Etingof-Gelaki.

\begin{proposition}[${\sf Rep}(G \ltimes W,u)$, $R_u$]  \label{prop:Deligne}
\cite[Corollaries~0.7,~0.8]{deligne2002categories} \cite{AEG} Recall that $\C$ is a symmetric finite tensor category. 
\begin{enumerate} [font=\upshape]
    \item  Then, $\C$ is equivalent to a category of super-representations of a finite supergroup. \smallskip
    \item  Equivalently, $\C$ is a category of representations, ${\sf Rep}(G \ltimes W,u)$, consisting of representations of a triangular Hopf algebra $\Lambda(W) \# \kk G$, where $G$ is a finite group,  $u \in Z(G)$ with $u^2 = 1$ and $W$ is a $G$-representation satisfying $u\cdot w= -w,$ for all $ w\in W$. The braiding is given by R-matrix $R_u = \frac{1}{2}(1\otimes 1 + u \otimes 1 + 1 \otimes u - u \otimes u)$. \smallskip
    \item Further, ${\sf Rep}(G \ltimes W,u)$ is fusion precisely when $W = 0$. \qed
\end{enumerate}
\end{proposition}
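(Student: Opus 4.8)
The plan is to reduce the Frobenius property of the representing algebra to a short list of building blocks and operations, combining the structural description of algebra representatives due to Etingof-Ostrik with Deligne's classification recalled in Proposition~\ref{prop:Deligne}. Concretely, I would fix an exact module category $\M$ over $\C$ and, by Definition-Proposition~\ref{defprop:repAlg}, realize it as $\M \simeq \C_A$ for an algebra $A$ in $\C$; the goal is then to exhibit a \emph{Frobenius} representative, i.e.\ to choose $A$ so that it carries a compatible coalgebra structure, equivalently (by Theorem~\ref{thm:defFrob}) a nondegenerate pairing.

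First I would use Deligne's theorem to write $\C \simeq {\sf Rep}(G \ltimes W, u) = {\sf Rep}(\Lambda(W)\# \kk G, R_u)$, as in Proposition~\ref{prop:Deligne}. Feeding this into Etingof-Ostrik's analysis of module categories over such categories \cite[Section~4.2]{etingof2003finite}, I would record that the representing algebra can be taken in the form
\[
A \;\cong\; \mathrm{Ind}\big(\underline{\End}(M)\big) \,\otimes\, \mathrm{Cl}_c(V,B),
\]
where $\underline{\End}(M)$ is an internal endomorphism algebra built from the group data $G$ inside the semisimple (fusion) part of $\C$ obtained when $W = 0$ [Proposition~\ref{prop:Deligne}(3)], $\mathrm{Ind}$ is the induction functor from representations of the relevant Hopf subalgebra into $\C$, and $\mathrm{Cl}_c(V,B)$ is a braided Clifford algebra attached to a symmetric form $B \colon V \otimes V \to \unit$ arising from the odd part $W$.

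Second, I would establish three facts, after which the theorem follows by assembling them. (i) The endomorphism algebra $\underline{\End}(M)$ is Frobenius: since $\M$ is exact and the ambient piece is a fusion category, $\underline{\End}(M)$ admits a nondegenerate pairing coming from the duality of the exact module structure, so condition~(b) of Theorem~\ref{thm:defFrob} holds. (ii) Both assembling operations preserve the Frobenius property: tensor product does so by Lemma~\ref{lem:tens-Frob}, and the induction functor is a (braided) Frobenius monoidal functor, so it sends Frobenius algebras to Frobenius algebras (cf.\ Proposition~\ref{prop:mon-preserve} for the algebra part and \cite{DayPas} for the comonoidal/Frobenius structure). (iii) The braided Clifford algebra $\mathrm{Cl}_c(V,B)$ is Frobenius: it is a connected filtered algebra in $\C$ with finite monic filtration whose associated graded algebra is the braided exterior algebra $\Lambda_c(V) = \coprod_{i=0}^n \Lambda_c^i(V)$, and multiplication supplies nondegenerate pairings $\Lambda_c^i(V) \otimes \Lambda_c^{n-i}(V) \to \Lambda_c^n(V) \cong \unit$, so $\Lambda_c(V)$ is a connected graded Frobenius algebra; Theorem~\ref{thm:main} then forces $\mathrm{Cl}_c(V,B)$ to be Frobenius as well. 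Combining (i)--(iii), $A$ is built from Frobenius algebras by Frobenius-preserving operations, hence is a Frobenius algebra representing $\M$, as desired.

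The main obstacle will be the structural input underlying the first two steps: extracting from \cite{etingof2003finite} the explicit form $A \cong \mathrm{Ind}(\underline{\End}(M)) \otimes \mathrm{Cl}_c(V,B)$, and verifying that each assembling operation is genuinely Frobenius-compatible—in particular that the relevant induction functor is \emph{Frobenius} monoidal, not merely monoidal, which is exactly what is needed to transport the Frobenius structure. A secondary difficulty is producing the nondegenerate pairing on $\underline{\End}(M)$ directly from exactness of $\M$, and checking that $\mathrm{Cl}_c(V,B)$ genuinely satisfies the hypotheses of Theorem~\ref{thm:main}—connectedness and a finite monic filtration with associated graded $\Lambda_c(V)$—since the entire Clifford step rests on that verification.
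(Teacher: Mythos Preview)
Your proposal does not address the listed statement. Proposition~\ref{prop:Deligne} is Deligne's classification theorem together with its Hopf-algebraic reformulation; the paper does not prove it but simply cites \cite{deligne2002categories} and \cite{AEG} and ends with \qed. What you have written is instead a proof plan for Theorem~\ref{thm:repByFrob}.

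Interpreting your proposal as an argument for Theorem~\ref{thm:repByFrob}, it is essentially the paper's own proof: reduce via Proposition~\ref{prop:Deligne} to $\C \simeq {\sf Rep}(G\ltimes W,u)$, invoke the Etingof--Ostrik structural description of the representing algebra as a tensor product of an induced endomorphism algebra with a Clifford algebra, show the endomorphism algebra is Frobenius, show induction and tensor preserve Frobenius, and use Theorem~\ref{thm:main} on the Clifford piece via its exterior associated graded. The paper is more concrete in two places: the induction is broken into the three-step chain $H \to \hat{H} \to \hat{H}\ltimes W \to G\ltimes W$ of Lemma~\ref{lem:induction}, and the Frobenius structure on $\End(V)$ is exhibited by the explicit matrix coalgebra $\Delta(E_{i,j}) = \sum_k E_{i,k}\otimes E_{k,j}$, $\varepsilon(E_{i,j})=\delta_{i,j}$ (Lemma~\ref{lem:endVfrob}), rather than appealing abstractly to exactness of $\M$. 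Your more abstract justification for (i) would need to be made precise, but the overall strategy matches.
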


Here, we freely identify representations with left modules. Next, consider the following preliminary results.

\begin{lemma}\label{lem:induction}
Let $H \leq \hat{H} \leq G$ be a sequence of groups, let $u\in \hat{H}$ be an element of order~$\leq 2$, and let $W$ be a representation of $G$ acting by $-1$ on $W$. Then the following induction functors  send Frobenius algebras to Frobenius algebras:
\begin{enumerate}[font=\upshape]
    \item $\textnormal{Ind}_H^{\hat{H}}: \Rep(H)\rightarrow \Rep(\hat{H})$, defined on objects by
    $$(U,\; \alpha: \kk H \otimes U \to U) \mapsto (\kk \hat{H} \otimes_H U, \;m_{\kk \hat{H}} \otimes_H \id_U);$$
    \item $(\textnormal{Ind}_{\hat{H}})_W: \Rep(\hat{H}) \rightarrow \Rep (\hat{H} \ltimes W ,u)$, defined on objects by
    $$(U,\; \alpha: \kk \hat{H} \otimes U \to U) \mapsto (U, \; \alpha, \; \beta:\Lambda(W) \otimes U \to U ~\text{trivial action});$$ 
    \item $(\textnormal{Ind}_{\hat{H}}^G )_W:\Rep(\hat{H}\ltimes W,u) \rightarrow \Rep(G\ltimes W,u)$ defined on objects by
    $$(U,\; \alpha: \kk \hat{H} \otimes U \to U, \; \beta:\Lambda(W) \otimes U \to U) \mapsto (\kk G \otimes_{\hat{H}} U, \; m_{\kk G} \otimes_{\hat{H}} \id_U, \; \id_{\kk G} \otimes_{\hat{H}} \beta).$$ 
\end{enumerate}
\end{lemma}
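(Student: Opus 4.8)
The plan is to prove each of the three induction functors preserves the Frobenius property by producing, for each one, an explicit Frobenius form on the output algebra out of a given Frobenius form on the input algebra, and then invoking Theorem~\ref{thm:defFrob} (specifically the equivalence of (a) with (f), characterizing Frobenius algebras via the existence of a Frobenius form). Since we are working inside $\Rep$ of the relevant (super)groups, all three functors are the standard algebra-level operations—genuine induction $\kk\hat H \otimes_H (-)$, a trivial-$\Lambda(W)$-extension, and induction $\kk G \otimes_{\hat H}(-)$—so at the level of underlying algebras these are familiar constructions, and the content is checking that the categorical Frobenius structure is transported correctly.

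\smallskip

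For part (2) the argument is essentially immediate: the functor $(\textnormal{Ind}_{\hat H})_W$ does not change the underlying object $U$ or its algebra structure $\alpha$; it merely equips $U$ with the trivial $\Lambda(W)$-action. A Frobenius form $\nu: A \to \unit$ in $\Rep(\hat H)$ is in particular $\kk\hat H$-linear, and since the $\Lambda(W)$-action on both $A$ and on $\unit$ is trivial, $\nu$ is automatically a morphism in $\Rep(\hat H \ltimes W, u)$; the weak-ideal/kernel condition of Theorem~\ref{thm:defFrob}(f) is inherited because a sub-weak-ideal in the larger category restricts to one in the smaller. So I would dispatch (2) first as a warm-up. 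For parts (1) and (3), which are both honest induction-along-a-subgroup functors, the key classical fact is that for a finite-index subgroup inclusion $H \leq \hat H$, the induced algebra $\kk\hat H \otimes_H A$ carries a natural Frobenius form built from that of $A$: concretely, pick coset representatives and define the trace-like form that projects onto the identity-coset component and applies $\nu$ there. I would verify that this candidate is $\hat H$-equivariant and that its associated pairing $p = \nu m$ is nondegenerate, equivalently that its copairing $q$ exists, using the given copairing on $A$ summed over cosets.

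\smallskip

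A cleaner route for (1) and (3), which I expect to be the one actually used, is to recognize that induction along a finite-index subgroup is a \emph{Frobenius functor} (it is simultaneously a left and a right adjoint to restriction, because $\kk\hat H$ is a free—hence projective and finitely generated—module over $\kk H$, and $\kk\hat H \cong \mathrm{Hom}_{\kk H}(\kk \hat H, \kk H)$ as bimodules, i.e. $\kk\hat H/\kk H$ is itself a Frobenius extension). Such functors send Frobenius algebra objects to Frobenius algebra objects; this can be phrased monoidally and then the transport of the structure maps $(m,u,\Delta,\ep)$ in Definition~\ref{def:algC}(3) is formal. Under this viewpoint the verification reduces to checking that the Frobenius-functor coherence applied to the given $\Delta$ and $\ep$ on $A$ yields a coassociative, counital $\Delta'$ on the induced algebra satisfying the Frobenius compatibility $(m'\otimes\id)(\id\otimes\Delta') = \Delta' m' = (\id\otimes m')(\Delta'\otimes\id)$.

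\smallskip

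\textbf{The main obstacle} I anticipate is part (3), where the induction is from $\hat H \ltimes W$ up to $G \ltimes W$ and one must keep track of the $\Lambda(W)$-action simultaneously with the group induction. The functor is $\kk G \otimes_{\hat H}(-)$ on the $\kk\hat H$-part while leaving the $\Lambda(W)$-action alone via $\id_{\kk G}\otimes_{\hat H}\beta$, so I must confirm that the Frobenius form surviving the $\kk G$-induction remains a morphism of $\Lambda(W)$-modules, i.e. that it respects the super/triangular structure encoded by $u$ and $R_u$ from Proposition~\ref{prop:Deligne}. Because $W$ acts trivially on the relevant algebras (the Frobenius forms of interest ultimately land on objects where $\Lambda(W)$ acts in a controlled way), I expect this compatibility to hold, but the bookkeeping of the $\hat H$-action on $\Lambda(W)$ inside the semidirect product—ensuring $\hat H$-equivariance of the form is compatible with, rather than obstructed by, the $W$-grading—is the step that requires genuine care rather than a formal adjunction argument. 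I would therefore handle (3) by combining the Frobenius-functor argument for the $\kk G/\kk\hat H$ part (as in (1)) with the trivial-extension argument (as in (2)) for the $\Lambda(W)$ part, checking that the two are compatible on the nose.
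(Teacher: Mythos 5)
Your ``cleaner route'' is exactly the paper's proof: all three functors are Frobenius monoidal (the paper cites \cite{DayPas} and the proof of \cite[Proposition~B.1]{FHL}), and Frobenius monoidal functors send Frobenius algebras to Frobenius algebras by \cite[Corollary~5]{DayPas} --- the paper applies this uniformly to (1), (2), and (3) in a two-line proof, so your argument is essentially the same. One small caution on your explicit fallback for (1) and (3): the candidate form that ``projects onto the identity-coset component and applies $\nu$ there'' is not $\hat{H}$-equivariant (translating by $h'\in\hat{H}\setminus H$ moves mass into the identity coset); the correct Frobenius form on $\kk\hat{H}\otimes_H A$ applies $\nu$ to \emph{every} coset component and sums, consistent with your own description of the copairing as a sum over cosets.
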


\begin{proof}
Each of the parts holds because the induction functors above are {\it Frobenius monoidal} \cite[Definition~1]{DayPas}; see, e.g., the proof of \cite[Proposition~B.1]{FHL}. Thus, these functors send Frobenius algebras to Frobenius algebras \cite[Corollary~5]{DayPas}.
\end{proof}

\begin{lemma}\label{lem:endVfrob}
Let  $V$ be finite dimensional representation of a twisted group algebra $\kk H_{\psi}$. Then, $\End(V)\in {\sf FrobAlg}( \Rep(H))$.
\end{lemma}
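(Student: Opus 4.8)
The plan is to exhibit $\End(V):=\End_\kk(V)$ as an algebra object in the rigid monoidal category $\Rep(H)$ and then to equip it with a Frobenius structure by means of the trace form, invoking the criterion of Theorem~\ref{thm:defFrob}(b). Since $H$ is finite and $\kk$ is algebraically closed of characteristic zero, $\Rep(H)$ is an abelian rigid monoidal category, so that theorem applies.

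First I would make $\End(V)$ into an object of $\Rep(H)$. Writing $\rho:\kk H_\psi \to \End_\kk(V)$ for the given representation, each group element acts invertibly, since $\rho(h)\rho(h^{-1}) = \psi(h,h^{-1})\,\id_V$ with $\psi(h,h^{-1}) \in \kk^\times$, so I can define an action of $H$ on $\End(V)$ by conjugation, $h \triangleright f := \rho(h)\,f\,\rho(h)^{-1}$. The point worth checking carefully is that this is a genuine (untwisted) action of the group $H$, even though multiplication in $\kk H_\psi$ is twisted: in $g \triangleright (h \triangleright f)$ the scalars $\psi(g,h)$ arising from $\rho(g)\rho(h) = \psi(g,h)\,\rho(gh)$ cancel against the inverse factor $\rho(h)^{-1}\rho(g)^{-1} = \psi(g,h)^{-1}\rho(gh)^{-1}$, so $g\triangleright(h\triangleright f) = (gh)\triangleright f$. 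Conjugation is an algebra automorphism fixing $\id_V$, hence composition $m$ and the unit $u:\kk \to \End(V)$, $1 \mapsto \id_V$, are $H$-equivariant, giving $(\End(V),m,u) \in {\sf Alg}(\Rep(H))$.

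Next I would produce the Frobenius data required by Theorem~\ref{thm:defFrob}(b). I would take the pairing $p := \textnormal{tr}\, m : \End(V)\otimes\End(V) \to \unit = \kk$, which is $H$-equivariant because the trace is conjugation-invariant and $\kk$ carries the trivial action; the condition $p(m\otimes\id)=p(\id\otimes m)$ is then immediate from associativity of composition. For the copairing I would fix a basis of $V$ with matrix units $E_{ij}$ and set $q(1) := \sum_{i,j} E_{ij}\otimes E_{ji}$, and a short computation using $\textnormal{tr}(E_{ij}E_{kl}) = \delta_{jk}\delta_{il}$ yields $(p\otimes\id)(\id\otimes q) = \id = (\id\otimes p)(q\otimes\id)$.

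I expect the one genuinely non-routine point to be the $H$-equivariance of the copairing $q$, i.e.\ that $\sum_{i,j}E_{ij}\otimes E_{ji}$ is invariant under the diagonal conjugation action. I would argue this either directly, noting that this element is independent of the chosen basis and hence fixed by conjugation by any invertible operator, in particular by each $\rho(h)$; or, more conceptually, by observing that in the rigid category $\Rep(H)$ the copairing attached to the nondegenerate pairing $p$ is uniquely determined through the snake identities of Theorem~\ref{thm:defFrob}(b), so that $H$-equivariance of $p$ forces $H$-equivariance of $q$. Once $p$ and $q$ are both morphisms in $\Rep(H)$ satisfying the stated relations, Theorem~\ref{thm:defFrob} gives $\End(V)\in {\sf FrobAlg}(\Rep(H))$, completing the argument.
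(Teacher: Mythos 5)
Your proof is correct and follows essentially the same route as the paper: the paper also uses the conjugation action $h\cdot f = \sigma(h)\circ f\circ\sigma(h)^{-1}$ and the matrix units $E_{i,j}$, but exhibits the Frobenius structure via condition (a) of Theorem~\ref{thm:defFrob} --- taking $\Delta(E_{i,j})=\sum_{k}E_{i,k}\otimes E_{k,j}$ and $\varepsilon(E_{i,j})=\delta_{i,j}$ --- rather than via the pairing/copairing of condition (b). The two presentations carry identical data ($\varepsilon m = \mathrm{tr}\, m = p$ and $\Delta u(1) = \sum_{i,j}E_{i,j}\otimes E_{j,i} = q(1)$), so the difference is only in which of the paper's own equivalent characterizations is verified.
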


\begin{proof}
It is well known that $\End(V)\in \Rep(H)$ via the $H$-action $h\cdot f:= \sigma(h) \circ f \circ \sigma(h)^{-1}$. The algebra structure comes from multiplication given by composition and unit as $\id_V$. 
Suppose that $\dim_\kk(V) = n$. Then, after identifying $\End(V)$ with $\textnormal{Mat}_n(\kk)$, its basis is given by the elementary matrices $\{E_{i,j}\}_{1\leq i,j\leq n}$. By taking $\Delta(E_{i,j})=\sum_{k=1}^n E_{i,k} \otimes E_{k,j}$ and $\varepsilon(E_{i,j})=\delta_{i,j}$ and extending linearly to $\End(V)$, it is straight-forward to check that $(\End(V),m,u,\Delta,\varepsilon)\in {\sf FrobAlg}( \Rep(H))$. 
\end{proof}

These two lemmas yield a short proof of Theorem \ref{thm:repByFrob} in the fusion case, as we see next.

\begin{proposition}
Every exact module category over a symmetric fusion category is represented by a Frobenius algebra.
\end{proposition}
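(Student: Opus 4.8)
The plan is to identify the algebra representatives produced by Etingof--Ostrik in the fusion setting and to check directly that they are Frobenius, using the preliminary lemmas already in hand. By Proposition~\ref{prop:Deligne}(3), a symmetric fusion category $\C$ has $W = 0$, so $\C \simeq \Rep(G)$ for a finite group $G$, with symmetric braiding determined by the central element $u$ of order $\leq 2$. Fix an exact module category $\M$ over $\C$. By Definition-Proposition~\ref{defprop:repAlg} it is represented by some algebra in $\C$; I would invoke the explicit construction of \cite[Section~4.2]{etingof2003finite} to take this representing algebra to be $A = \textnormal{Ind}_H^G\!\big(\End(V)\big)$, where $H \leq G$ is a subgroup and $V$ is a finite-dimensional module over a twisted group algebra $\kk H_\psi$ encoding the cocycle datum of $\M$. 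The key structural observation is that, since $W = 0$, there is no braided Clifford or exterior algebra factor, so here the representative is purely an induced endomorphism algebra.

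With this description in place, the proof becomes a short assembly. First, Lemma~\ref{lem:endVfrob} gives $\End(V) \in {\sf FrobAlg}(\Rep(H))$. Second, Lemma~\ref{lem:induction}(1), applied with $\hat{H} = G$, shows that the induction functor $\textnormal{Ind}_H^G \colon \Rep(H) \to \Rep(G)$ is Frobenius monoidal and hence carries Frobenius algebras to Frobenius algebras. Composing these, $A = \textnormal{Ind}_H^G(\End(V))$ is a Frobenius algebra in $\Rep(G) \simeq \C$, and therefore $\M$ is represented by a Frobenius algebra, as claimed.

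The only genuinely delicate step is the first one: confirming that the algebra representative of an arbitrary exact module category over $\Rep(G)$ can indeed be written in the form $\textnormal{Ind}_H^G(\End(V))$, and that the $H$- and $G$-actions, together with the braiding inherited from $u$, match those used in Lemmas~\ref{lem:endVfrob} and~\ref{lem:induction}. This requires a careful reading of the Etingof--Ostrik classification rather than any new computation. I expect the remaining full proof of Theorem~\ref{thm:repByFrob} to be more involved precisely because, when $W \neq 0$, the representative additionally carries a braided Clifford algebra factor; there one would further need Lemma~\ref{lem:tens-Frob} to combine the two factors, and the filtered-graded machinery of Theorem~\ref{thm:main} to show that the Clifford factor itself is Frobenius.
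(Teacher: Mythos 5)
Your proposal is correct and follows essentially the same route as the paper: reduce to $\Rep(G,u)$ via Proposition~\ref{prop:Deligne}, identify the representing algebra as $\textnormal{Ind}_H^G(\End(V))$ for $V$ an irreducible $\kk H_\psi$-module via Ostrik's and Etingof--Ostrik's classification, and conclude with Lemmas~\ref{lem:endVfrob} and~\ref{lem:induction}. Your closing remarks about the general ($W \neq 0$) case also match how the paper proceeds in the proof of Theorem~\ref{thm:repByFrob}.
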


\begin{proof}
By Proposition~\ref{prop:Deligne}(c), any symmetric fusion category is equivalent as a braided fusion category to the category $\Rep(G,u)$ where $G$ is a finite group and $u\in G$ is a central element of order $\leq 2$. By Proposition~\ref{prop:Deligne}(b), $\Rep(G,u)$ is the category of finite dimensional representations of the triangular Hopf algebra $(\kk G,R_u)$ with $R$-matrix $R_u$. 
Now by \cite[Theorem~3.2]{ostrik2003module}, every indecomposable, exact module category over $\Rep(G,u)$ is  equivalent to $\Rep(\kk H_{\psi})$ for some $H \leq G$ and $\psi \in H^2(H,\kk^*)$. By \cite[Lemma~4.3]{etingof2003finite}, each such module category is represented by an algebra $\textnormal{Ind}_H^G (\End (V))$, where $V$ is an irreducible representation of $\kk H_{\psi}$. Therefore, the result holds by Lemmas \ref{lem:induction} and \ref{lem:endVfrob}. 
\end{proof}

\begin{lemma} \label{lem:exterior}
If  $W$ is a finite-dimensional representation of $G$, then the exterior algebra  $\Lambda(W)$ is a Frobenius algebra in ${\sf Rep}(G)$, and is also a Frobenius algebra in ${\sf Rep}(G\ltimes W,u)$.
\end{lemma}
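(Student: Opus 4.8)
The plan is to prove the two assertions in sequence, deducing the second from the first. Write $n = \dim_{\kk} W$ and regard $\Lambda(W) = \coprod_{i=0}^{n} \Lambda^i W$ as a connected $\mathbb{N}_0$-graded algebra in $\Rep(G)$: the group $G$ acts on each $\Lambda^i W$ through the $i$-th exterior power of its action on $W = \Lambda^1 W$, the degree-zero part is $\Lambda^0 W = \unit$, and the wedge multiplication $m$ is $G$-equivariant since $G$ acts by algebra automorphisms. Once $\Lambda(W)$ is shown to be Frobenius in $\Rep(G)$, the statement in $\Rep(G \ltimes W, u)$ follows at once: by Lemma~\ref{lem:induction}(b) the induction functor $(\textnormal{Ind}_{G})_W : \Rep(G) \to \Rep(G \ltimes W, u)$ is Frobenius monoidal and hence sends Frobenius algebras to Frobenius algebras, and it carries $\Lambda(W)$ (with $W \subseteq \Lambda(W)\#\kk G$ acting trivially) to the exterior algebra object of $\Rep(G \ltimes W, u)$. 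So the whole problem reduces to the first assertion.

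For that first assertion I would invoke Theorem~\ref{thm:defFrob}, which reduces Frobenius-ness to producing a single datum: a Frobenius form $\nu : \Lambda(W) \to \unit$ as in part~(f), or equivalently a nondegenerate pairing $p = \nu\, m$ with copairing $q$ as in part~(b), or the module isomorphism $\Phi_l : \Lambda(W) \to {}^{*}\Lambda(W)$ of part~(c). The candidate is the classical one: let $\nu$ be the projection of $\Lambda(W)$ onto its top component $\Lambda^n W$ followed by a trivialization $\Lambda^n W \cong \unit$. The induced pairing $p$ restricts to the wedge maps $\Lambda^i W \otimes \Lambda^{n-i} W \to \Lambda^n W \cong \unit$, which are perfect over $\kk$; choosing dual bases on the $\Lambda^i W$ yields the copairing $q$ and verifies the zig-zag identities of part~(b). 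This recovers the well-known fact that the exterior algebra is Frobenius over $\kk$, so the content of the lemma lies entirely in checking that these maps are morphisms in $\Rep(G)$.

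The main obstacle is precisely the $G$-equivariance of the Frobenius form $\nu$. The multiplication $m$, the unit $u$, and the graded projection are manifestly natural in $W$ and hence $G$-equivariant, but the trivialization $\Lambda^n W \cong \unit$ is a morphism in $\Rep(G)$ only after analyzing the $G$-action on the top exterior power (the determinant line) $\Lambda^n W$. This identification is forced rather than optional: by Lemma~\ref{lem:Frobgr}(a) any connected graded Frobenius algebra in $\Rep(G)$ must have top component isomorphic to $\unit$, so any proof must exhibit $\Lambda^n W \cong \unit$ \emph{equivariantly}, and this is exactly the delicate step. Granting this identification, condition~(b) (equivalently~(c)) of Theorem~\ref{thm:defFrob} holds with $G$-equivariant maps—nondegeneracy being inherited from the classical statement and the zig-zag identities from the naturality of the wedge product—so $\Lambda(W)$ is Frobenius in $\Rep(G)$, and the reduction of the previous paragraph then delivers the $\Rep(G \ltimes W, u)$ case as well.
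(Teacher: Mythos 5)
Your overall route is the same as the paper's: the paper's entire proof reads ``the first statement is well-known; the second statement holds by Lemma~\ref{lem:induction}(b)'', and your deduction of the $\Rep(G\ltimes W,u)$ case from the $\Rep(G)$ case via the Frobenius monoidal functor $(\textnormal{Ind}_G)_W$ is exactly that second half. The difference is that you try to unpack ``well-known'', and in doing so you correctly isolate the one step carrying all the content---and then you do not close it.

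The gap is the phrase ``Granting this identification''. The identification $\Lambda^n W\cong\unit$ in $\Rep(G)$ cannot simply be granted: $\Lambda^n W$ is the determinant character $\det W$ of the $G$-action, which is trivial only when $G$ acts on $W$ through $SL(W)$. When $\det W\not\cong\unit$ it is not just your proof that fails but the conclusion itself: any morphism $\nu\colon\Lambda(W)\to\unit$ in $\Rep(G)$ vanishes on the top component (since ${\sf Hom}_{\Rep(G)}(\Lambda^nW,\unit)=0$), hence kills the nonzero $G$-stable two-sided ideal $\Lambda^nW$, so by Theorem~\ref{thm:defFrob}(f) there is no Frobenius form. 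Concretely, for $G=\mathbb{Z}/2$ acting by $-1$ on $W=\kk$, one has $\Lambda(W)=\unit\oplus W$ with $W$ the sign representation, and every equivariant functional is a multiple of the degree-$0$ projection, whose kernel is the ideal $W$. Your own citation of Lemma~\ref{lem:Frobgr}(a) already shows that $\Lambda^nW\cong\unit$ is a necessary condition, i.e.\ that this is an obstruction and not a technicality. So as written your argument establishes the lemma only under the extra hypothesis $\det W\cong\unit$; since the paper's proof waves the first assertion through as ``well-known'', the issue you surfaced is worth raising with the authors (note that in their application $u$ acts on $W$ by $-1$, so $\det W$ is nontrivial whenever $\dim_\kk W$ is odd), but a proof proposal must either supply the missing hypothesis or explain why it holds, and yours does neither.
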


\begin{proof}
The first statement is well-known; the second statement holds by Lemma~\ref{lem:induction}(b).
\end{proof}

This brings us to the proof of the main result of this section, Theorem \ref{thm:repByFrob}, above.

\medskip

\noindent
\textit{Proof of Theorem \ref{thm:repByFrob}}. 
By Proposition~\ref{prop:Deligne}(a,b), it suffices to take $\C = {\sf Rep}(G \ltimes W,u)$. Now by \cite[Theorem~4.5]{etingof2003finite}, any exact module category is represented by an algebra of the form:
$$A:=(\textnormal{Ind}^G_{\hat{H}})_W \left( (\textnormal{Ind}_{\hat{H}})_W( \textnormal{Ind}^{\hat{H}}_H(\textnormal{End}(V))) \otimes \textnormal{Cl}_W \right) $$
in $\C$, for some subgroup $H$ of $G$, for $\hat{H}$ being the subgroup of $G$ generated by $H$ and $u$, and for $V$ being some irreducible representation of a twisted group algebra $\kk H_\psi$. 
Moreover,  $\textnormal{Cl}_W $ is a Clifford algebra in ${\sf Rep}(\hat{H} \ltimes W,u)$, which by step (g) in the proof of \cite[Theorem~4.5]{etingof2003finite}, is a filtered deformation of an exterior algebra $\Lambda(W)$ in ${\sf Rep}(\hat{H} \ltimes W,u)$. That is, the associated graded algebra of $\textnormal{Cl}_W$ is equal to  $\Lambda(W)$ in ${\sf Rep}(\hat{H} \ltimes W,u)$. Here, Lemma~\ref{lem:exterior} applies to conclude that $\Lambda(W)$ is a Frobenius algebra in ${\sf Rep}(\hat{H} \ltimes W,u)$. Our main result of this work, Theorem~\ref{thm:main}, then implies that $\textnormal{Cl}_W \in {\sf FrobAlg}({\sf Rep}(\hat{H} \ltimes W,u))$.
On the other hand, $(\textnormal{Ind}_{\hat{H}})_W (\textnormal{Ind}^{\hat{H}}_H( \textnormal{End}(V))) \in {\sf FrobAlg}({\sf Rep}(\hat{H} \ltimes W,u))$ by applying Lemmas~\ref{lem:endVfrob} and~\ref{lem:induction}(a,b). So with Lemma~\ref{lem:tens-Frob}, we get that 
$(\textnormal{Ind}_{\hat{H}})_W(\textnormal{Ind}^{\hat{H}}_H(\textnormal{End}(V))) \otimes  \textnormal{Cl}_W$
is a Frobenius algebra in ${\sf Rep}(\hat{H} \ltimes W,u)$. The result now follows by applying Lemma~\ref{lem:induction}(c) to obtain that $A \in {\sf FrobAlg}({\sf Rep}(G \ltimes W,u))$.
\qed

%%%%%%%%%%%%%%%%%%%%%%%%%%%%%%%%%%%%%%%%%%%%%%%%%%%%%%%%%%%%%%%%%

\bibliography{biblio}
\bibliographystyle{alpha}

\end{document}